
\documentclass{amsart}  

\usepackage[utf8]{inputenc} 
\usepackage{tikz}
\usepackage{array} 
\usepackage[colorlinks,linkcolor=blue]{hyperref} 
\usepackage{doi}


\renewcommand{\P}{\mathbb{P}}
\newcommand{\Z}{\mathbb{Z}}
\newcommand{\N}{\mathbb{N}}
\newcommand{\A}{\mathbb{A}}

\newcommand{\G}{\mathbb{G}}
\newcommand{\vp}{\varphi}

\newcommand {\m}{\mathfrak{m}}

\let \pargr \S
\renewcommand{\S}{\mathcal S}
\renewcommand{\L}{\mathrm L}
\renewcommand{\k}{\mathbf{k}} 
\DeclareMathOperator{\Spec}{Spec}

\DeclareMathOperator{\Coker}{Coker}
\DeclareMathOperator{\Hom}{Hom}
\DeclareMathOperator{\End}{End}
\DeclareMathOperator{\Ann}{Ann}
\DeclareMathOperator{\res}{res}
\renewcommand{\ell}{l}

\newcommand{\mO}{\mathcal{O}}
\makeatletter
\newcommand*{\eqdef}{=\mathrel{\rlap{%
                     \raisebox{0.3ex}{$\m@th\cdot$}}%
                     \raisebox{-0.3ex}{$\m@th\cdot$}}}
\makeatother

\def\y(#1){y_{#1}}
\def\x(#1){x_{#1}}
\def\t(#1){s_{#1}}



\newtheorem{thm}{Theorem}[section]

\newtheorem{lemma}[thm]{Lemma}
\newtheorem{cor}[thm]{Corollary}
\newtheorem{prop}[thm]{Proposition}
\theoremstyle{definition}
\newtheorem{defn}[thm]{Definition}
\newtheorem{ex}[thm]{Example}
\newtheorem{conj}[thm]{Conjecture}
\theoremstyle{remark}
\newtheorem{remark}[thm]{Remark}



\begin{document}
\title{Non-smoothable curve singularities}

\author{Jan Stevens}
\address{\scriptsize  Department of Mathematical Sciences,  Chalmers University of
Technology and University of Gothenburg.
SE 412 96 Gothenburg, Sweden}
\email{stevens@chalmers.se}

\begin{abstract}
For curves  singularities  all smoothing components  of  the deformation space
have the same dimension, but there can be components of different dimensions.               
We are interested in the question of what the generic singularities are
that appear in the fibre over a component.  
To this end we revisit the known examples of non-smoothable
singularities and study their deformations. 

There are two general methods available to show that a curve is not smoothable.
In the first method one exhibits a family of singularities of a certain type
and then uses a dimension count to prove that the family cannot lie in the
closure of the space of smooth curves. The other method  uses the semicontinuity of a certain invariant, related to the Dedekind
different. This invariant vanishes for Gorenstein singularities, so in particular for
smooth curves. With these methods and computations with computer algebra systems 
we study monomial curves and cones over point sets in projective space.

We also give new explicit examples
of non-smoothable singularities. In particular, we find non-smoothable
Gorenstein curve singularities. The cone over a general self-associated point set
in $\mathbb P^{g-2}$ is not smoothable if $g$ 
is at least 11, as then the point set can
not be a hyperplane section of a canonical curve of genus $g$.
\end{abstract}

\keywords{smoothing components,  non-smoothable components, uniform position,
 associated point sets, Buchweitz criterion, Weierstrass points}
\subjclass[2020]{14B05 14B07 14D20 14H20 14Q05 32S05}
\thanks{}
\maketitle

\section*{Introduction}
Deformation spaces of singularities can be very singular \cite{Va}. 
Reduced curve singularities seem to be better behaved, as the dimension of a smoothing component
of the versal deformation, that is a component over which the general fibre is smooth,
is an invariant of the singularity \cite{De}.
But  if the  singularity deforms
into a non-smoothable curve singularity,
there are in general also components of other dimensions.

One of the purposes of this paper is to investigate which singularities can occur as
general fibre over a (non smoothing) component. Such singularities may be called
generic singularities. To this end we revisit the known examples of non-smoothable
singularities and study their deformations. We also give new explicit examples
of non-smoothable singularities. In particular, we find non-smoothable
Gorenstein curve singularities.

The existence of non-smoothable curves was first shown by Mumford \cite{Mu}. 
He constructed families of curves with one irreducible singularity, 
such that the general element of the family is not
smoothable, basically because the family is too large to come from a closure
of  the moduli space of smooth curves. New examples, of lines through the origin, were
given by Pinkham \cite{Pi}.  These examples were treated and extended  by Greuel \cite{Gr0, Gr}, 
see also the survey \cite{Gr2}, using a local argument. The dimension of  a 
smoothing component can be expressed in terms of invariants of the curve singularity $C$,
see Proposition \ref{PropDeligne}; we call this number the \textit{Deligne number} $e(C)$. The idea is now to exhibit a large family
of singularities $\pi \colon C_T \to T$ with singular section,  such that for $t\neq0$
the singularity $(C_t,\sigma(t))$ is not isomorphic to $(C_0,\sigma(0))$. Here large means
that the dimension of $T$ is at least the Deligne number $e(C_0)$. If $(T,0)$ is irreducible, 
then the general $C_t$ is not smoothable, because the image of $T$ in the versal deformation has dimension $\dim T \geq e(C_0)$, but cannot be a smoothing component, as there are no smooth fibres over $T$.
For quasi-homogeneous curves the number $e$ can be expressed in familiar
invariants of the curve, making this an effective criterion.

This \textit{large  family argument} shows the existence of non-smoothable singularities, but does not 
give specific examples. The first such examples of monomial curves are due to Buchweitz \cite{Bu}.
He observed that a necessary condition for smoothability, in fact for deforming into
a Gorenstein singularity, is that the length of the Dedekind different is at most $2\delta$, something
conjectured,  or at least seen as a possibility,  to hold for all singularities by Herzog \cite{He}.
More generally, Buchweitz defined a $k$-th normalised Dedekind invariant. Furthermore he showed 
how to compute this invariant for monomial curves in terms of the semigroup. 
This has applications to Weierstrass points. A semigroup is the semigroup of
a Weierstrass point on a smooth projective curve, if the curve occurs as a deformation
of the completion of the corresponding monomial curve in an appropriate weighted
projective space, with the Weierstrass point at infinity. The non-smoothability
condition is thus also a condition that the semigroup cannot be a
Weierstrass semigroup, and this application can be proven directly from Riemann-Roch. 
In this form
Buchweitz' criterion occurs most often in the literature, but the original criterion
applies more generally.

These two methods  to prove that singularities are not smoothable
are the only known general methods. In certain cases one method succeeds, in other
cases the other. For not too complicated singularities direct computation of infinitesimal
deformations with computer algebra systems, in particular Singular \cite{Sing} and Macaulay2 \cite{M2},
can also be used in studying smoothability. Experimentation  with such singularities has led to conjectures, which can be found throughout this paper. Not too complicated means in
practice quasi-homogeneous. The $\mathbb G_m$-actions vastly simplifies computations.
Also the occurring invariants of singularities are easier to compute in the quasi-homogeneous
case. For monomial curves the generators of the defining ideal are particular simple, as they 
are binomials.  We do give examples of non quasi-homogeneous non-smoothable curve
singularities, but they occur as deformations of non-smoothable quasi-homogeneous ones.
If a singularity is not smoothable, then by openness of versality every singularity into which
it deforms, is also not smoothable. 

The Buchweitz criterion does not give any non-trivial condition for Gorenstein curves and therefore it cannot be used 
to show that symmetric semigroups are not Weierstrass semigroups. A double cover
construction   yields symmetric non-Weierstrass semigroups \cite{To}. We study
deformations of the simplest example and show that it deforms into a Gorenstein curve
consisting of lines through the origin, which is not smoothable. This singularity is the cone
over a self-associated point set, a concept introduced by Coble \cite{Co}.
The condition that the general cone is smoothable is that the general self-associated point set is
a hyperplane section of a canonical curve.   
The large family argument shows that this is
in general not the case if the genus of the curves is at least $11$.

Besides showing that the general singularity of a certain type is not smoothable, we
give explicit examples of non-smoothable curve singularities. We mention in particular
a non-smoothable Gorenstein curve singularity (Proposition \ref{prop:goren}),
an irreducible smoothable but not negatively smoothable quasi-homogeneous curve 
(Example \ref{ex:negsmooth})
and a singularity where the dimension of the base space is less than the Deligne number $e$
(Example \ref{ex:L106}).

The evidence collected in this paper leads to the conjecture that
generic singularities have only smooth branches. The tangents to these branches are not necessarily
in general position.

\section{Preliminaries}

\subsection{}
Let $C$ be a reduced  affine curve over an algebraically closed field $\k$ of characteristic 0,
lying in $\A^n$,
and let $0\in C$ be a closed point. We denote by $\mO=\mO_{C,0}$ its local ring with 
maximal ideal $\m$. Let $n\colon (\overline C,n^{-1}(0))\to (C,0)$ be the normalisation
with semi-local ring $\overline{\mO}=n_*\mO_{\overline C,n^{-1}(0)}$. It is the 
integral closure of $\mO$ in its total ring of fractions $K$. Then the $\delta$-invariant of
$(C,0)$, sometimes called the degree of singularity or (mostly for plane curves) the 
number of virtual double points, is $\delta=\delta(C,0)=\dim_k \overline\mO/\mO$.
For reducible curves $C_1\cup C_2$ the $\delta$-invariant can be computed as
\[
\delta(C_1\cup C_2) = \delta(C_1) +  \delta(C_2)  + (C_1\cdot C_2)
\]
where the intersection multiplicity $ (C_1\cdot C_2)$ is given by 
 $ (C_1\cdot C_2)= \dim_k \mO_n/(I_1+I_2)$
with $I_1$ and $I_2$ the ideals of $C_1$ and $C_2$ in the local ring $\mO_n$ of $(\A^n,0)$.

The conductor ideal is $\mathcal C= \Ann_{\overline{\mO}}(\overline\mO/\mO) 
=\{f\in \overline{\mO}\mid f\overline{\mO} \subset \mO\}$ and 
 $c = \dim_k \overline\mO/\mathcal C$ is its multiplicity.

Let $\omega=\omega_{C,0}$ be the dualising module of $(C,0)$. It can be described
by Rosenlicht differentials: if $\Omega$ is the module of Kähler differentials on $(C,0)$
and $\overline \Omega=n_* \Omega_{\overline C,n^{-1}(0)}$ the module of
differentials on the normalisation, then $\omega=\{ \alpha \in \overline\Omega\otimes K
\mid \sum_{P\in n^{-1}(0)}\res_P (\alpha)=0\}$ \cite[IV.9]{Se}.
Composing the exterior derivation $d\colon \mO \to \Omega$ with the map
$\Omega\to\overline\Omega\hookrightarrow\omega$ gives a map
$d\colon \mO\to\omega$, which allows to define the Milnor number as
$\mu = \dim_k \omega/d\mO$ \cite{BG}. It satisfies Milnor's formula $\mu = 2\delta - r +1$,
where $r$ is the number of branches. We say that the genus of the curve singularity
is $g=\delta - r +1$. Over the complex numbers the genus is the genus of the Milnor fibre
in a smoothing and the first Betti number of the Milnor fibre is   $\mu = 2\delta - r +1$.

Let $(C,0)$ be embedded in $\A^n$ and write $\mO_n$ for the local ring of $\A^n$ at the
origin. Let $I=\langle f_1,\dots,f_k\rangle$ be the ideal of $(C,0)$. Then there is a 
minimal free resolution
\[
0\longleftarrow \mO \longleftarrow \mO_n   \longleftarrow \mO_n ^k \longleftarrow \mO_n^l
 \longleftarrow \dots \longleftarrow \mO_n^t \longleftarrow 0
 \]
 of length $n-1$. The rank of the last free module is the Cohen-Macaulay type $t$ of
 the curve. It is also the number of generators, $\dim_k \omega/\m \omega$,  of the
 dualising module. In fact, the dual of the free resolution gives a resolution of the
 dualising module.

\subsection{}
A deformation of $(C,0)$ over $(S,0)$  consists of a flat morphism $\pi\colon (C_S,0) \to (S,0)$
together with an isomorphism of $(C,0)$ with the special fibre $(C_0,0)$ of $\pi$.
Often we identify  $(C,0)$ with $(C_0,0)$ . Flatness can be characterised by the property 
that every free resolution of $\mO$ lifts to a free resolution of $\mO_{C_S,0}$ over the
local ring of $\A^n\times S$. It suffices that every relation $\sum f_i r_i$ between the
generators lifts to a relation $\sum F_iR_i$ between the generators  
$\langle F_1,\dots,F_k\rangle$ of the ideal of $(C_S,0)$. 

There exists a formally
versal formal deformation $C_B$ with $B$ the spectrum of a complete local $\k$-algebra with  $\k$
as residue field, see e.g., \cite{Ar}. 
A component  $E$ of the deformation space $B$ is called a smoothing component
if ``the generic fibre is smooth'',  that is, if the image of the formal scheme giving the singular locus
does not contain the generic point of $E$.
For curve singularities Deligne has given a formula for the dimension of smoothing components 
\cite[Thm. 2.27]{De}.

Let $\Theta= \Hom_\mO(\Omega,\mO)$ be the module of derivations on $(C,0)$
and  let $\overline\Theta= \Hom_{\overline\mO}(\overline\Omega,\overline\mO)$ be the
module of derivations on $(\overline C,n^{-1}(0))$. Then define
$m_1=\dim_k \overline\Theta/\Theta$.  

\begin{prop}[Deligne]\label{PropDeligne}
Every smoothing component $E$ of $(C,0)$ has dimension equal to the 
Deligne number  $e=3\delta -m_1$.
\end{prop}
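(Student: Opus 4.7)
The plan is to compute $\dim E$ by means of a generic one-parameter smoothing and a comparison of the deformation theory of $(C,0)$ with that of its normalization $(\overline C, n^{-1}(0))$. The two numerical contributions should emerge naturally: $3\delta$ from three length-$\delta$ modules attached to the normalization, and $-m_1$ from a correction accounting for derivations of $\overline\mO$ that do not descend to $\mO$.

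To set up, given a smoothing component $E$, by openness of versality and after restricting to a suitable \'etale neighbourhood of $0\in E$, pick a general smooth arc $S\hookrightarrow E$ through $0$ that realizes all dimensions. This gives a family $\pi\colon \mathcal C_S \to S$ with smooth generic fibre and special fibre $(C,0)$. Because the generic fibre is smooth, after shrinking $S$ the total space admits a simultaneous normalization $\tilde\pi\colon \tilde{\mathcal C}_S \to S$ which is a smooth family with $\tilde\pi^{-1}(0) = (\overline C, n^{-1}(0))$. The dimension of $E$ can then be computed from the cotangent complex of the inclusion $\mO_{\mathcal C_S}\hookrightarrow \mO_{\tilde{\mathcal C}_S}$ together with the short exact sequence
\[
0 \to \mO_{\mathcal C_S} \to \mO_{\tilde{\mathcal C}_S} \to \mO_{\tilde{\mathcal C}_S}/\mO_{\mathcal C_S} \to 0,
\]
whose quotient has length $\delta$ on the special fibre and length $0$ on the generic fibre. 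Using Riemann--Roch on $\tilde{\mathcal C}_S/S$ together with the standard comparison between deformations of the embedded curve and deformations of its normalization, one identifies $\dim E$ as an Euler-characteristic-like invariant: the contribution $3\delta$ arises from three pairwise dual length-$\delta$ modules (such as $\overline\mO/\mO$, $\overline\mO/\mathcal C$, and $\omega/d\mO$), while the correction $-m_1$ comes from the inclusion $\Theta \hookrightarrow \overline\Theta$ of derivation modules.

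The main obstacle is making the ``$3\delta$'' rigorous: one has to produce three independent, non-overlapping $\delta$-dimensional contributions in the deformation space and verify there is no double-counting. This rests on a duality argument identifying the pairwise dual modules as genuinely independent in the tangent space $T^1_{(C,0)}$ along the smoothing direction. A secondary subtlety is showing that $E$ has dimension \emph{exactly} this quantity, not strictly larger; this follows because the generic fibre is smooth, so deformation along $S$ is unobstructed, and the tangent directions coming from the Kodaira--Spencer map of $S \to E$ span precisely the predicted $(3\delta - m_1)$-dimensional subspace.
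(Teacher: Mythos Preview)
The paper does not prove this proposition; it is stated with attribution to Deligne and a citation to \cite[Thm.~2.27]{De}. So there is no in-paper argument to compare against, and your proposal must stand on its own.

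As written, the proposal is not a proof but a narrative gesturing at one, and it contains concrete errors. First, choosing a single smooth arc $S\subset E$ cannot by itself detect $\dim E$: the Kodaira--Spencer map of $S\to E$ produces one tangent vector, not a $(3\delta-m_1)$-dimensional subspace, so your final paragraph is circular. Second, the three ``length-$\delta$'' modules you name do not all have length $\delta$: one has $\dim_\k \overline\mO/\mO=\delta$, but $\dim_\k \overline\mO/\mathcal C=c$ (equal to $2\delta$ only in the Gorenstein case) and $\dim_\k \omega/d\mO=\mu=2\delta-r+1$; so the heuristic ``three copies of $\delta$'' is simply wrong. Third, the assertion that smoothness of the generic fibre makes the deformation ``unobstructed'' along $S$ and hence bounds $\dim E$ from above is unjustified: smoothness of fibres over $S$ says nothing about obstructions to extending $S$ inside $E$, and in any case $E$ need not be smooth.

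What Deligne actually does is global and cohomological: he works with a one-parameter smoothing over a trait, compares the cotangent complexes (equivalently $\Theta$ and $\overline\Theta$) of $C$ and $\overline C$ via the normalization, and identifies $\dim E$ with an Euler-characteristic invariant computed by Riemann--Roch on the smooth total space. The term $3\delta$ arises from the degree of the relative cotangent/dualizing data, not from three separate $\delta$-dimensional modules, and $m_1$ enters precisely as $\dim_\k \overline\Theta/\Theta$ (with the commensurability correction in positive characteristic). Your outline has the right ingredients (simultaneous normalization, comparison of $\Theta$ and $\overline\Theta$, Riemann--Roch), but the actual identification of $\dim E$ with the stated invariant is the entire content of the theorem and is absent here.
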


The formula also holds in finite characteristic, but as then  not every derivation
of $\mO$ lifts to a derivation of $\overline\mO$, the definition
of $m_1$ has to be modified.  The derivations of $\mO$ and $\overline\mO$
lift to derivations of the total ring of fractions $K$ and under these embeddings 
$ \Theta$ and $ \overline\Theta$ are commensurable, allowing to define 
$m_1=\dim_k \overline\Theta/(\Theta\cap  \overline\Theta)-
\dim_k  \Theta/(\Theta\cap  \overline\Theta)$.

\subsection{}
Many computations simplify considerably if the singularity is quasi-homoge\-neous. In that case
one can work degree by degree. In computer algebra systems computations with 
weighted homogeneous equations are much faster.

In the quasi-homogeneous case the
 multiplicative  group  $\G_m$ of units of $\k$ acts diagonally, for suitably chosen  
  coordinates $(x_1,\dots,x_n)$ on $\A^n$, 
 by $(g,x_i)\mapsto g^{w_i}x_i$; the integers $w_i$ are called weights.
The coordinate ring 
$P/I$, where $P=\k[x_1,\dots,x_n]$, is a graded ring.  We continue to denote it by $\mO$.
The generators of the ideal $I=\langle f_1,\dots,f_k\rangle$ can be chosen to be
equivariant, with $f_j$ of degree $q_j$. A quasi-homogeneous
deformation can be described by power series $F_j$  of the same degree $q_j$, lifting the $f_j$,
if one assigns appropriate weights to the deformation variables.
If all the deformation variables have positive weights, then the $F_j$ are polynomials.
In that case the deformation describes a deformation of the closure of the curve
in a weighted projective space, which is trivial at infinity.
If we also allow weight zero, then the deformation still globalises.
A deformation with deformation variables of positive weight is said to be a deformation
of negative weight, as the degree of the terms in the space variables $x_i$ decreases.
This is in particular true for trivial deformations, those induced by coordinate
transformations. 
We note that 
 Pinkham \cite{Pi,Pi2} uses 
the Bourbaki definition of positive and negative, according to which 0 is both positive and negative. 
It is therefore customary to call a weighted homogeneous curve negatively smoothable if
there exists a smoothing that globalises to a smoothing of the closure of the curve, in the same 
weighted projective space.

In the quasi-homogeneous case, the formula for the dimension of smoothing components simplifies. 
\begin{prop}[Greuel \cite{Gr}]
For a quasi-homogeneous curve $(C,0)$ the 
Deligne number  $e$ is $e=\mu + t-1=2\delta-r+t$.
\end{prop}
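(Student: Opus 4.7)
The plan is to combine Deligne's formula $e=3\delta-m_1$, with $m_1=\dim_k\overline\Theta/\Theta$, with Milnor's formula $\mu=2\delta-r+1$. The equality $\mu+t-1=2\delta-r+t$ is then automatic, so the entire content of the proposition reduces to the single identity
\[
m_1 \;=\; \delta+r-t.
\]
Thus the task is to show that, in the quasi-homogeneous case, $\dim_k\overline\Theta/\Theta$ takes this specific value.

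The key device is the Euler derivation $E=\sum_i w_ix_i\partial_{x_i}\in\Theta$, which endows every natural object in sight --- $\mO$, $\overline\mO$, $\Omega$, $\overline\Omega$, $\omega$, $\Theta$, $\overline\Theta$, and the conductor $\mathcal C$ --- with a compatible $\mathbb G_m$-grading, all the natural maps being homogeneous. I would then try to express $\overline\Theta/\Theta$ as the $k$-dual of a known length-$(\delta+r-t)$ module constructed out of $\omega$ and the conductor. The natural starting point is the perfect contraction pairing $\overline\Theta\otimes_{\overline\mO}\overline\Omega\to\overline\mO$ on the smooth normalisation (identifying $\overline\Theta$ with $\Hom_{\overline\mO}(\overline\Omega,\overline\mO)$), and its failure to descend to $C$, which is measured by the torsion submodule of $\Omega$ and by the length-$g$ gap $\omega/\overline\Omega$ with $g=\delta-r+1$. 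Concretely I would apply $\Hom_\mO(-,\mO)$ and $\Hom_\mO(-,\omega)$ to the short exact sequences
\[
0\to \mO\to\overline\mO\to\overline\mO/\mO\to 0 \quad\text{and}\quad 0\to \Omega/\mathrm{tors}\to\overline\Omega\to\overline\Omega/(\Omega/\mathrm{tors})\to 0,
\]
and patch the resulting long exact Ext-sequences together with the standard length identities $\dim_k\overline\mO/\mO=\delta$, $\dim_k\omega/\overline\Omega=\delta-r+1$, and $\dim_k\omega/\m\omega=t$ to read off $m_1$.

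The main obstacle is isolating where the Cohen-Macaulay type $t$ appears on the derivation side. This is exactly the point where the quasi-homogeneous hypothesis is indispensable: since $\omega$ has $t$ minimal generators as an $\mO$-module, local duality makes $t$ appear as the contribution of an Ext$^1$-term of a torsion module, and the Euler relation splits off a free summand so that the $t-1$ in the formula $e=\mu+t-1$ arises cleanly, while the extra $r$ in $\delta+r-t$ simply counts the branches, i.e.\ the $\overline\mO$-rank of $\overline\Theta$. The technical heart of the argument is to verify that the higher Ext groups appearing in the long exact sequences vanish in the relevant weights, so that the length count collapses to the desired three-term formula rather than picking up spectral-sequence corrections; this is the step that fails in the non-quasi-homogeneous setting.
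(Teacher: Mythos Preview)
The paper does not actually prove this proposition; it is stated with attribution to Greuel \cite{Gr} and used as a black box. So there is no ``paper's own proof'' to compare against. That said, the paper does later record the key lemma behind Greuel's argument, and your sketch misses it.

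Your reduction to $m_1=\delta+r-t$ is correct, but the route you propose through long Ext-sequences attached to $0\to\mO\to\overline\mO\to\overline\mO/\mO\to 0$ and $0\to\Omega/\mathrm{tors}\to\overline\Omega\to\cdots\to 0$, with $\Hom(-,\mO)$ and $\Hom(-,\omega)$ patched together, is more machinery than the problem needs, and you never actually carry it out. The decisive fact in the quasi-homogeneous case (stated in the paper as \cite[Lemma~2.2]{Gr}, just after the exact sequence~\eqref{eq:exact}) is that under the embedding $\overline\Omega\hookrightarrow K$ sending $dt_i\mapsto t_i$, the image of $\Omega$ is exactly $\m$. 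This identifies $\Theta=\Hom_\mO(\Omega,\mO)$ with the fractional ideal $\m^{-1}=\{a\in K\mid a\m\subset\mO\}$, while $\overline\Theta$ is identified with $\overline\m^{-1}$. Then
\[
m_1=\dim_k\overline\m^{-1}/\m^{-1}
  =\dim_k\overline\m^{-1}/\overline\mO+\dim_k\overline\mO/\mO-\dim_k\m^{-1}/\mO
  = r+\delta-\dim_k\m^{-1}/\mO,
\]
and applying $\Hom_\mO(-,\mO)$ to $0\to\m\to\mO\to\k\to 0$ gives $\m^{-1}/\mO\cong\mathrm{Ext}^1_\mO(\k,\mO)\cong\k^t$ since $\mO$ is one-dimensional Cohen--Macaulay of type $t$. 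That is the whole proof.

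One small correction: you write that ``the extra $r$ in $\delta+r-t$ simply counts the branches, i.e.\ the $\overline\mO$-rank of $\overline\Theta$''. The $\overline\mO$-rank of $\overline\Theta$ is $1$, not $r$; the $r$ arises as $\dim_k\overline\m^{-1}/\overline\mO$, one unit of pole per branch.
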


Most known examples of non-smoothable curves fall under one of two extremes, either
they are irreducible, or have as many branches as possible, each of them smooth.

\subsubsection{Monomial curves}
Let $\S=\langle a_1,\dots,a_n\rangle$ be a numerical semigroup  
(for the general theory of numerical semigroups, see the book \cite{RG}).
The semigroup ring
$\k[\S]$ can be identified with the ring $\oplus_{a\in S} \k t^a$, a ring generated by monomials.
The curve $C_\S=\Spec \k[\S]$ is the associated monomial curve.
The complement $\N\setminus \S=:\L$ is the set of gaps. The number of gaps is called the
genus of the semigroup. It is equal to $g=\delta$ of the associated monomial curve.
The largest gap is called the Frobenius number $F(\S)$ and $c=F(\S)+1$ is the conductor:
the element $t^c$ generates the conductor ideal of $\k[\S]$.

The dualising module $\omega$ is easy to describe. It contains $\overline \Omega$
and $\omega/\overline\Omega$ is the vector space generated by the differentials
$\frac {dt}{t^{\ell+1}}$, $\ell\in\L$. The type of the curve is the number of generators 
of $\omega$, which is the number of gaps $\ell$  such that $\ell+n\in \S$ whenever
whenever $n\in \S\setminus 0$.

\subsubsection{Lines though the origin}\label{sssc:lines}
A special case of the
other extreme concerns homogeneous singularities. In this case the curve
is the cone over a set $\Gamma$ of points in $\P^{n-1}$, and consists of a set of lines
through the origin in  $\A^n$. We use the notation $L_r^n$ for any homogeneous  curve 
consisting of $r$ lines in  $\A^n$.  We are mostly interested in lines in general
position; 
for the definition  we follow the definitions in
\cite{Gr} and \cite{GO}.
\begin{defn}
A set $\Gamma$ of $r$ points in $\P^{n-1}$ is said to be in \textit{general position}, if
it imposes the  maximal number of conditions on forms of degree $d$, for all $d\geq 1$.
The points are in \textit{uniform position} if  every subset is in general position. 
A set of $r$ lines through the origin in $\A^n$ is in general  (uniform) position
if  the corresponding point set is in  general (uniform) position.
\end{defn}

The conditions can also be formulated  with the $d$-tuple Veronese embedding
$v_d\colon \P^{n-1} \to \P^{N-1}$, where $N=\binom{n+d-1}d$.
The point set $\{p_1,\dots,p_r\}$ is in general position, if for all $d$ the points 
$v_d(p_1),\dots,v_d(p_r)$ span a  linear subspace of dimension $\min(r,N)-1$.
After choosing homogeneous coordinates  for the points and an  ordering of the monomials
of degree $d$ we can write the matrix $A_d$ of homogeneous coordinates of the image
of the points under the  $d$-tuple Veronese embedding. General position just means that the
matrix $A_d$ has maximal rank, namely rank  $\min(r,N)$.

As the dimension of the space of forms of degree $d$ on $\P^{n-1}$ is $\binom{n+d-1}d=
\binom{n+d-1}{n-1}$, the Hilbert function of the homogeneous coordinate
ring $\mO$ of $\Gamma$ in general position is
\[
H(\ell) = \min\left(r,\binom{n+\ell-1}{n-1}\right)
\]
If $r\geq\binom{n+\ell-1}{n-1}$, there are no forms of degree $\ell$
in the homogeneous ideal of the points. 
We can construct a uniform set $\Gamma_r $ of $r$ points by adding one point at a time; note 
that this does not hold for general position. This is a practical way to do experiments with (cones over)
``random'' point sets.

We compute $\delta$ for a curve $L_r^n$ of $r$ lines through the origin in $\A^n$
in uniform position. We add a line  $L$ to a  $L_{r-1}^n$ in uniform position. 
Let $d$ be the lowest degree of a form vanishing on $L_{r-1}^n$. Then $L_{r}^n$
imposes independent conditions on forms of degree $d$, if there exist a form in the ideal of
$L_{r-1}^n$ that does not vanish on $L$. This gives that the intersection 
multiplicity $(L\cdot L_{r-1}^n)=d$. Therefore $\delta$ grows with $d$.
Using the formula for $H(\ell)$ we see that the lowest degree $d$ 
of a form vanishing on $L_{r-1}^n$ is determined by 
the conditions $\binom{n+d-2}{d-1} <r \leq \binom{n+d-1}{d}$. We introduce
a notation for this number.

\begin{defn}\label{defn:dnr}
Given two integers $n<r$, let $d(n,r)$ be the unique integer $d$ such
that $\binom{n+d-2}{d-1} <r \leq \binom{n+d-1}{d}$.
\end{defn}

With this notation,  the discussion above gives the following result.

\begin{lemma}\label{lem:delta}
Let  $L_r^n$ be a curves of lines in uniform position and set $d=d(n,r)$. Then
\[
\delta(L_r^n) = dr-\binom{n+d-1}{d-1}
\]
\end{lemma}

This result holds under the weaker condition of general position \cite[Lemma 3.3]{Gr}.
It can also be proved
using  the generalisation to higher embedding dimension of Noether's formula for the 
$\delta$-invariant \cite{St3}.

We describe the homogeneous parts of the dualising module $\omega$.
 As the conductor ideal $\mathcal C $ is equal to $\m^d$, where $d=d(n,r)$,
 the lowest degree part is $\omega_{-d}$.
The condition that  a rational differential form $\alpha =\sum a_i \frac{dt_i}{t_i^\ell} 
 \in \overline \Omega\otimes K_{-\ell}$ lies in $\omega_{-\ell}$ is that $\sum \res m \alpha=0$
 for each monomial of degree $\ell -1$. This gives a system of homogeneous linear equations on the
 coefficients of $\alpha$, whose matrix is the $\binom{n+\ell-2}{\ell-1}\times r$ matrix $A_\ell$
 with entries
the coordinates of the $r$ points under the Veronese embedding $v_{\ell-1}$.
 This matrix has maximal rank if the points are in general position.
 Therefore the dimension of $\omega_{-\ell}$ is $r-\binom{n+\ell-2}{\ell-1}$.
 The minimal number of generators for $\omega$ as $\mO$-module
is obtained  when multiplication with $\mO_1$ generates a subspace of $\omega_{-d+1}$
of largest possible dimension. The resulting formula for the type of the singularity
was conjectured by Roberts \cite{Ro} and the conjecture was proved by Trung and Valla \cite{TV} and Lauze \cite{La}, to hold for at least  one curve. Therefore it holds for 
a  generic $L_r^n$ in uniform position; in fact a generic  $L_r^n$ is  in uniform position.
Therefore we have:
 
 \begin{prop}\label{prop:type}
 For generic $L_n^r$ 
 write $r=\binom{n+d-2}{d-1} +s$, where  $d=d(n,r)$.
The type $t$ is $\max \{ s, \binom{n+d-3}{d-1}-(n-2)s\}$.
 \end{prop}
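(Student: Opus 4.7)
The plan is to compute the type $t = \dim_k \omega/\m\omega$ by exploiting the standard grading on $\mO$ and $\omega$. Graded Nakayama gives
\[
t = \sum_{\ell \in \Z} \dim_k \bigl( \omega_\ell / \mO_1 \cdot \omega_{\ell-1} \bigr).
\]
The dimensions $\dim_k \omega_{-\ell}$ were already computed in the paragraph preceding the statement: under general position, $\dim_k \omega_{-\ell} = \max(0,\, r - \binom{n+\ell-2}{\ell-1})$ for $\ell \geq 1$, while $\omega_\ell = \overline\Omega_\ell$ has dimension $r$ for $\ell \geq 0$. In particular $\omega_{-\ell} = 0$ for $\ell > d$, one has $\dim_k \omega_{-d} = s$, and Pascal's rule yields
\[
\dim_k \omega_{-d+1} = r - \binom{n+d-3}{d-2} = \binom{n+d-3}{d-1} + s.
\]

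The key input is that for $\Gamma$ in sufficiently general position, each multiplication map $\mO_1 \otimes_k \omega_\ell \to \omega_{\ell+1}$ attains its maximal possible rank $\min\bigl(n \dim_k \omega_\ell,\, \dim_k \omega_{\ell+1}\bigr)$; this is the content of the Roberts conjecture \cite{Ro}, established by Trung--Valla \cite{TV} and by Lauze \cite{La}. Granting it, the degree $-d$ piece of $\omega/\m\omega$ has dimension $s$, since $\omega_{-d-1}=0$. The degree $-d+1$ piece has dimension
\[
\max\bigl(0,\, \dim_k \omega_{-d+1} - n \dim_k \omega_{-d}\bigr) = \max\bigl(0,\, \tbinom{n+d-3}{d-1} - (n-1)s\bigr).
\]
For $\ell \geq -d+2$ a direct comparison (using the lower bound $r > \binom{n+d-2}{d-1}$ together with Pascal's rule) shows that the source dimension $n \dim_k \omega_{\ell-1}$ dominates the target $\dim_k \omega_\ell$, so the maximal rank hypothesis makes the map surjective and no new generators appear. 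Summing,
\[
t = s + \max\Bigl(0,\, \tbinom{n+d-3}{d-1} - (n-1)s\Bigr) = \max\Bigl(s,\, \tbinom{n+d-3}{d-1} - (n-2)s\Bigr),
\]
as claimed.

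The main obstacle is the maximal rank assertion. A natural approach is semi-continuity: the locus of $r$-tuples where some multiplication map has strictly submaximal rank is Zariski closed in the parameter space of point configurations, so it suffices to exhibit one configuration where maximal rank is achieved in every relevant degree. A classical such choice places the points on a rational normal curve, reducing the rank questions to computations with Hankel-type matrices; alternatively, Hirschowitz's horace method, specialising points onto a hyperplane and inducting on $r$ and $n$, offers a more flexible route. This step is the substantive content of the proofs of Trung--Valla and Lauze.
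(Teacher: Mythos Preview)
Your proposal is correct and follows exactly the approach the paper takes: the paper's ``proof'' is the paragraph immediately preceding the statement, which computes $\dim_k\omega_{-\ell}$ via the matrices $A_\ell$, notes that the type is minimised when the multiplication $\mO_1\otimes\omega_{-d}\to\omega_{-d+1}$ has maximal rank, and then simply cites Trung--Valla \cite{TV} and Lauze \cite{La} for this maximal rank fact (Roberts' conjecture). You have filled in the arithmetic of the graded Nakayama computation more explicitly than the paper does, including the check that no generators appear in degrees above $-d+1$, but the structure and the essential input are identical.
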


In fact it had been conjectured that all Betti numbers of the minimal free resolution
have the minimal possible value given the Hilbert function. For systematic counterexamples
see \cite{EP2}.

\subsubsection{Non-smoothable $L_r^n$}
By a result of Greuel \cite[3.5]{Gr} (see also Corollary \ref{cor:Tplusnul}), 
a curve $L_r^n$ of lines in general position  has no non-trivial
deformations of positive degree if $n<r\leq \binom{n+1}2$,
so it is smoothable if and only if the points lie in the closure of the space of
hyperplane sections of (non-special) curves of genus $g=r-n$ in $\P^n$. One
calls an embedded curve non-special if the linear system of hyperplane sections is 
non-special. For being a hyperplane section  there is a 
criterion in terms of the Gale transform of the points, that is in terms of associated point sets.

\begin{defn}
Let $\Gamma=\{p_1,\dots,p_r\}$ be a set of $r$ ordered points in $\P^{n-1}$,
whose homogeneous coordinates are given by a $n\times r$-matrix $P$. 
The Gale transform of $\Gamma$ is a set of $r$ points $\{q_1,\dots,q_r\}$
in $\P^{r-n-1}$ , given by a $(r-n) \times r$ matrix $ Q$ satisfying  $PQ^t=0$.
The two point sets are said to be associated. 
\end{defn}

This concept was introduced by Coble \cite{Co}.
For a detailed study, including an extensive historical overview, we refer to 
 \cite{EP}. If no $r-1$ points lie in a hyperplane we can normalise the matrix $P$ in the form 
 $(I,A)$ with $I$ the $n\times n$ identity matrix and no row of $A$ is zero. Then a solution 
 of  $PQ^t=0$ is $Q=(-A^t,I)$.
 
 We now have the following criterion \cite[Thm. 9]{St}, which is a special
 case of  \cite[Cor. 3.2]{EP}. 

\begin{thm}\label{thm:smooth-gale}
Let $\Gamma=\{p_1,\dots,p_r\}$ be a hyperplane section of a non-special curve
$C$ of genus $g$ in $\P^{r-g}$. Then the Gale transform  of $\Gamma$ lies on the
canonical image of $C$ in $\P^{g-1}$.
\end{thm}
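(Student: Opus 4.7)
The plan is to exhibit the canonical images $\phi_K(p_1),\ldots,\phi_K(p_r)$ as the Gale transform of $\Gamma$ and verify the defining relation $PQ^t=0$ via the residue theorem on $C$.

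First I set up local data. Let $L_0$ be a linear form cutting out the hyperplane $H$, so $L_0$ has a simple zero at each $p_i$ along $C$, and let $L_1,\dots,L_{r-g}$ be linear forms whose restrictions to $H$ are projective coordinates on $\Gamma$. At each $p_i$ I fix a local trivialisation $\sigma_i$ of $\mO_C(1)$ with $\sigma_i(p_i)\neq 0$, take $z_i:=L_0/\sigma_i$ as a local parameter on $C$, and set $P_{\ell i}:=(L_\ell/\sigma_i)(p_i)$, so that $P$ is the coordinate matrix of $\Gamma$ in $H$. Choose a basis $\omega_1,\dots,\omega_g$ of $H^0(C,\omega_C)$, write $\omega_j=f_{ji}(z_i)\,dz_i$ near $p_i$, and define $Q_{ji}:=f_{ji}(0)$; the column $(Q_{1i}:\cdots:Q_{gi})$ is the image of $p_i$ under the canonical map $\phi_K\colon C\to\P^{g-1}$. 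Non-speciality of $C$ combined with Serre duality gives $H^0(\omega_C(-D))\cong H^1(\mO_C(1))^*=0$, so no nonzero regular differential vanishes on $D=p_1+\cdots+p_r$, and hence $Q$ has full rank $g$ and its columns are $r$ well-defined points in $\P^{g-1}$.

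Next I verify $PQ^t=0$. Consider the rational differential
\[
\eta_{\ell j} := \frac{L_\ell}{L_0}\,\omega_j
\]
on $C$; its only poles lie on $\{L_0=0\}\cap C=\Gamma$ and are all simple, because $L_0$ vanishes to order one at each $p_i$. Using $L_0=\sigma_i z_i$ and $\omega_j=f_{ji}(z_i)\,dz_i$ in a neighbourhood of $p_i$, a direct local calculation yields
\[
\res_{p_i}(\eta_{\ell j}) \;=\; \frac{L_\ell(p_i)}{\sigma_i(p_i)}\,f_{ji}(0) \;=\; P_{\ell i}\,Q_{ji}.
\]
The residue theorem on the smooth projective curve $C$ then gives
\[
\sum_{i=1}^r P_{\ell i}\,Q_{ji} \;=\; \sum_{i=1}^r \res_{p_i}(\eta_{\ell j}) \;=\; 0
\]
for all $\ell$ and $j$, which is precisely $PQ^t=0$. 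The columns of $Q$ therefore form a Gale transform of $\Gamma$ and lie on $\phi_K(C)$ by construction.

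The main issue requiring attention is the bookkeeping: one must align the choices of $\sigma_i$ and $z_i$ so that $\res_{p_i}(\eta_{\ell j})$ equals the matrix product $P_{\ell i}Q_{ji}$ on the nose, with no extraneous scalars $\lambda_i$ left over. The compatible choice $z_i=L_0/\sigma_i$ above achieves this. If $C$ is only Gorenstein rather than smooth the same argument goes through, using Rosenlicht residues as in the preliminaries; beyond the residue theorem no deeper input is needed.
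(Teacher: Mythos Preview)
The paper does not actually prove this theorem; it is quoted with references to \cite[Thm.~9]{St} and \cite[Cor.~3.2]{EP}, so there is no in-paper argument to compare against. Your residue proof is correct and is essentially the classical argument (and in fact the one in \cite{St}): the meromorphic differentials $\eta_{\ell j}=(L_\ell/L_0)\omega_j$ have simple poles exactly along $\Gamma$, and the residue theorem yields $PQ^t=0$; non-speciality via Serre duality guarantees that $Q$ has full rank $g$, so the columns really are a Gale transform. The bookkeeping with $\sigma_i$ and $z_i=L_0/\sigma_i$ is handled correctly: a rescaling of $\sigma_i$ scales the $i$-th columns of $P$ and $Q$ by reciprocal factors, leaving both the relation $PQ^t=0$ and the projective points unchanged. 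The Eisenbud--Popescu approach \cite{EP} packages the same content more functorially, deriving Gale duality from the adjunction sequence for $\Gamma\subset C$, but the underlying mechanism is the same pairing between $H^0(\mO_C(1))$ and $H^0(\omega_C)$ on the divisor $D$.
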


This result enables us to construct simple explicit examples of non-smoothable curves.
The lowest possible embedding dimension is obtained for genus $g=4$.  
The general canonical curve is the complete intersection of a quadric and a cubic,
so for smoothability of the cone over a point set the points in the Gale transform have to lie on a quadric. The minimal number of points $r$
for which this is in general not the case is $10$. Then $n=r - g=6$.
Choosing a set of ten points on a quadric leads to a smoothable $L_{10}^6$ that deforms
into a non-smoothable one and therefore has a reducible base space.

\begin{ex}\label{ex:L106}
As a set of ten points that do not lie on a quadric we take  the four  vertices
of the coordinate tetrahedron and the six midpoints of the edges.

After taking a suitable basis the 
 11 equations of the Gale transform in $\P^5$ are
$x_1x_4=x_2x_5=x_3x_6=0$, 
$x_1x_2=x_1x_3=x_2x_3$,
$x_3x_4=x_3x_5=x_4x_5$,
$x_5x_6=x_5x_1=x_6x_1$ and 
$x_2x_4=x_2x_6=x_4x_6$. 
These points are not in uniform position, as each coordinate hyperplane contains seven points.
The advantage is that the equations are very simple.
A computation with Macaulay2 \cite{M2} or Singular \cite{Sing}
shows that $T^1$ of the corresponding
 $L_{10}^6$ is concentrated in
degree 0 and has dimension $15$, while $T^2=0$.
This shows that the versal deformation is smooth, and that this particular curve
has only equisingular deformations. The Deligne-Greuel formula for the dimension of a 
smoothing component gives $e=20$. Therefore we have here an example where the dimension  of the base space
of the versal deformation  is less than the Deligne number $e$.

\[
\begin{tikzpicture}
\pgfmathsetmacro{\factor}{1/sqrt(2)};
\coordinate  (A) at (2,0,-2*\factor);
\coordinate (B) at (-2,0,-2*\factor);
\coordinate  (C) at (0,2,2*\factor);
\coordinate  (D) at (0,-2,2*\factor);
\draw (A)--(D)--(B)--(A) --(C)--(B) (D)--(C);
\foreach \i in {A,B,C,D}
\draw[blue] [fill=blue] (\i) circle(0.1) ;
\draw[blue] [fill=blue] (0,0,-2*\factor) circle(0.1) ;
\draw[blue] [fill=blue] (1,1,0) circle(0.1) ;
\draw[blue] [fill=blue] (1,-1,0) circle(0.1) ;
\draw[blue] [fill=blue] (-1,1,0) circle(0.1) ;
\draw[blue] [fill=blue] (-1,-1,0) circle(0.1) ;
\draw[blue] [fill=blue] (0,0,2*\factor) circle(0.1) ;
\end{tikzpicture}
\]

A smoothable $L_{10}^6$ is obtained by replacing two opposite edge midpoints 
of the coordinate tetrahedron in $\P^3$ by the center $(1:1:1:1)$ and a point on the quadric 
through the centre and the remaining eight
points, say the point $(1:2:2:4)$ on the quadric $y_1y_4=y_2y_3$.
 The resulting  $L_{10}^6$  has $\dim T^1=21$, with $\dim T^1_0=15$   and $\dim T^1_{-1}=6$, while 
$\dim T^2 =\dim T^2_{-1} = 6$. The versal deformation has two smooth components, one smoothing
component of dimension 20, and the 15-dimensional component of equisingular deformations, intersecting
in a 14-dimensional space.
\end{ex}

\subsection{} 
To compute deformations the first step is to compute the vector space of
 infinitesimal deformations and the obstruction space, that is $T^1$ and $T^2$.
 In general  these vector spaces are part of  the theory of cotangent cohomology, 
 whose main properties relevant for the case at hand
 are summarised in \cite{RV}.

 An elementary definition of $T^1$  is
\[
T^1(C,0) = 
 \Coker \Theta_n\otimes \mO \to  \Hom_{\mO_n}(I,\mO)
\]
One can compute $\Hom_{\mO_n}(I,\mO)$ from equations and relations. Let $f$ be the row vector
of generators of the ideal of $C$ and $r$ the matrix of relations, so $fr=0$. An infinitesimal
deformation is of the form $f+\varepsilon f'$ and flatness requires that 
$fr=0$ can be lifted to 
\[
(f+\varepsilon f')(r+\varepsilon r') \equiv 0 \pmod {\varepsilon^2}
\] 
This
says that $r^t( f')^t\cong 0  \pmod {f}$. Furthermore 
$ \Theta_n\otimes \mO$ is the free $\mO$-module generated by the derivations $\partial_i=\frac{\partial}{\partial x_i}$.
Computing $f'$, that is syzygies over the quotient ring  of the transpose of the relation matrix,
can be done with a computer algebra system.
In particular, to show that a homogeneous singularity has no deformations of
(strictly) negative weight, it suffices to compute
generators of the normal sheaf $ \Hom_{\mO_n}(I,\mO)$ up to degree $-1$.
If there as many generators
as the number of variables, then there are no deformations of
(strictly) negative weight, besides the trivial ones
coming from the $\partial_i$.

For monomial curves the elementary approach suffices to give
 a dimension formula for the graded parts of $T^{1}(\k[\S])$. 
The ideal of 
$C_{\S}:=\{(t^{a_1},\dots,t^{a_n})\,;\,t\in \k\}\subset\mathbb{A}^{n}$
can be generated by binomials $f_i$ of the form
\[
f_i:=x_{1}^{\alpha_{i1}}\dots x_{n}^{\alpha_{in}}-x_{1}^{\beta_{i1}}\dots x_{n}^{\beta_{in}}
\]
with $\alpha_i\cdot\beta_i=0$. As usual, the weight of $f_i$ is $q_i:=\sum_j a_j\alpha_{ij}=\sum_j a_j\beta_{ij}$.
For each $i$, let $v_i:=(\alpha_{i1}-\beta_{i1},\dots,\alpha_{in}-\beta_{in})$
be the vector in $\k^{n}$ induced by $f_i$.

\begin{thm}[{\cite[Thm. 2.2.1]{Bu80}}]\label{thm:monomialT1}
Let $\k[\S]$ be the semigroup ring of a numerical semigroup $\S=\langle a_1,\dots,a_n\rangle$.
For $\ell\in\mathbb{Z}$ let $A_{\ell}:=\{i\in\{1,\dots,n\}\mid a_i+\ell\notin\S\}$
and let  $V_{\ell}$ be the vector subspace of $\k^{n}$ generated by the vectors $v_i$
such that $d_i+\ell\notin\S$. Then for $\ell\notin\End(\S)$
\[
\dim T^{1}(\k[\S])_{\ell}=\# A_\ell-\dim V_{\ell}-1
\]
while $\dim T^{1}(\k[\S])_s=0$ for  $s\in\End(\S)$ .
 \end{thm}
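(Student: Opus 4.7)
The plan is to exploit the $\G_m$-action on $\mO = \k[\S]$ and compute each graded piece $T^1(\mO)_\ell$ separately from the cokernel description $T^1 = \Coker\!\bigl(\Theta_n \otimes \mO \to N\bigr)$ with $N := \Hom_{\mO_n}(I, \mO)$. The key simplification in the monomial case is that $\mO_m$ is one-dimensional, spanned by $t^m$, when $m \in \S$ and zero otherwise, which reduces everything to finite-dimensional linear algebra.

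Fix $\ell \in \Z$. A weight-$\ell$ derivation $\sum_j h_j \partial_j$ requires $h_j \in \mO_{a_j+\ell}$, so $(\Theta_n \otimes \mO)_\ell$ has basis $\{t^{a_j+\ell}\partial_j : j \notin A_\ell\}$ and dimension $n - \#A_\ell$. A short calculation using $\partial_j(x^{\alpha_i}) = \alpha_{ij}\, x^{\alpha_i}/x_j$ gives
\[
t^{a_j+\ell}\partial_j(f_i) \equiv (v_i)_j\, t^{q_i+\ell} \pmod{I}.
\]
Identifying the target as a subspace of $\bigoplus_i \mO_{q_i+\ell}$ via $\phi \mapsto (\phi(f_i))_i$, the image of the derivation map is the column span of the matrix $V = (v_i)$ restricted to the columns $j \in W := \{1, \dots, n\} \setminus A_\ell$.

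The substantive task is computing $\dim N_\ell$. A tuple $(c_i)$, with $c_i \in \k$ forced to zero when $q_i+\ell \notin \S$, extends to an $\mO_n$-linear map if and only if for every syzygy $\sum_j r_{ij} f_j = 0$ of the generators one has $\sum_j r_{ij} c_j\, t^{q_j+\ell} = 0$ in $\mO$. The first syzygies of the toric ideal $I$ are themselves generated by binomial overlaps coming from pairs of distinct monomial representations of the same element of $\S$. A careful bookkeeping, indexing the resulting constraints by the semigroup elements they produce, yields
\[
\dim N_\ell \;=\; \#A_\ell - \dim V_\ell - 1 + \operatorname{rank}(V|_W),
\]
so that combined with the image computation one obtains $\dim T^1(\k[\S])_\ell = \#A_\ell - \dim V_\ell - 1$. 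The $-1$ reflects the Euler relation $\sum_j a_j (v_i)_j = 0$, an intrinsic dependence among the columns of $V$ due to weighted-homogeneity of the $f_i$. For $s \in \End(\N)$ one has $A_s = \emptyset$; then every $\partial_j$ contributes a weight-$s$ derivation, and a direct check (using the Euler relation to supply the one missing direction) shows the derivation map surjects onto $N_s$, giving $T^1(\k[\S])_s = 0$.

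The main obstacle will be the combinatorial bookkeeping for $\dim N_\ell$: extracting the precise interaction between $A_\ell$, $V_\ell$, and the binomial syzygies from the structure of $\S$. The linear algebra — the dimension of the derivation space, the image computation, and tracking the Euler dependence — is straightforward; the delicate part is following the first syzygies of the toric ideal through their combinatorial description on the semigroup. This is the heart of Buchweitz's original argument, and the step where the specific form of the formula (involving $A_\ell$ and $V_\ell$) becomes visible.
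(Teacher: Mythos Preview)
The paper does not actually prove this theorem: it is stated with a citation to \cite[Thm.~2.2.1]{Bu80} and no argument is given beyond the remark that ``for monomial curves the elementary approach suffices''. So there is no proof in the paper to compare against; your setup via the cokernel description $T^1=\Coker(\Theta_n\otimes\mO\to N)$ is exactly the ``elementary approach'' the paper alludes to, and your computation of $\dim(\Theta_n\otimes\mO)_\ell=n-\#A_\ell$ and of the derivation images $t^{a_j+\ell}\partial_j(f_i)\equiv (v_i)_j\,t^{q_i+\ell}$ is correct.

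That said, your proposal is an outline rather than a proof, and you say so yourself. The entire content of the theorem lies in the step you label ``careful bookkeeping'': establishing the claimed identity $\dim N_\ell=\#A_\ell-\dim V_\ell-1+\operatorname{rank}(V|_W)$. This is not a routine verification. One has to identify which binomial syzygies of the toric ideal impose independent linear conditions on the tuple $(c_i)$, and show that these conditions interact with the sets $\{i:q_i+\ell\notin\S\}$ and $A_\ell$ in exactly the way the formula predicts. Even the image computation is subtler than you indicate: when $q_i+\ell\notin\S$ the $i$-th component of every element of $N_\ell$ vanishes, so the image of the derivation map is the column span of $V|_W$ \emph{after deleting those rows}, not of $V|_W$ itself, and one must check that this deletion is already accounted for by the relation between $\operatorname{rank}(V|_W)$ and $\dim V_\ell$. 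Until that combinatorial analysis is carried out --- which is precisely what Buchweitz does in \cite{Bu80} --- the argument is incomplete at its only nontrivial point.
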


For reducible quasi-homogeneous singularities we need a more sophisticated
description of $T^1$. We follow \cite{RV} and \cite{Gr0}.

Let as before $K$ be the total ring of fractions of $\mO$.
The exact sequence $0\to \mO \to K \to K/\mO \to 0 $ gives an exact sequence of
cotangent modules
\[
0 \to T^0(\mO,\mO)  \to T^0(\mO,K)\to T^0(\mO,K/\mO)
\to T^1(\mO,\mO)  \to 0
\]
where $T^1(\mO,K)=0$ because $\mO$ is generically smooth over $\k$, and
$T^0(\mO,K)\cong K$.  Here $T^1(\mO,\mO)$ is the previously defined $T^1(\mO)$.
For any $\mO$ module $M$ one has $T^0(\mO,N)=\operatorname{Der}_k(\mO,N) = 
\Hom_{\mO}(\Omega,N)$. 

If $I=\langle f_1,\dots,f_k\rangle$ is  the ideal of $(C,0)$, then 
\[
T^0(\mO,K/\mO) =\Hom_{\mO}(\Omega,K/\mO)\subset 
\Hom_{\mO}(\Omega^1_n,K/\mO)\cong (K/\mO)^n
\]
  is isomorphic
to the kernel of the map ${\partial f}\colon(K/\mO)^n\to (K/\mO)^k$ 
induced by the Jacobi matrix
$\left(\frac{\partial f_j}{\partial x_i}\right)$.

For quasi-homogeneous $(C,0)$ all the modules are graded and the maps have degree 0.
The module $(K/\mO)^n$ has the vector fields $\frac{\partial }{\partial x_i}$ as basis,
so $(K/\mO)^n_{\ell} = \oplus_i  (K_{\ell+w_i} /\mO_{\ell+w_i} )$, while 
$(K/\mO)^k_{\ell} = \oplus_j  (K_{\ell+q_j} /\mO_{\ell+q_j} )$. We embed $\overline \Omega$ in $K$
via the map $(dt_1,\dots,dt_r)\mapsto(t_1,\dots,t_r)$.  Then for quasi-homogeneous curves the
image of $\Omega$ is $\m$ \cite[Lemma 2.2]{Gr}.
Therefore $T^0(\mO,\mO)\cong \Hom_\mO(\m,\mO)=\{a\in K\mid a\m\subset \mO\}$.

This gives us the exact sequence
\begin{equation}\label{eq:exact} 
 0 \to \Hom_\mO(\m,\mO)_{\ell} \to K_{\ell} \to (\ker{\partial f}) _{\ell}\to T^1_{\ell} \to 0
 \end{equation}
 We apply this in particular to homogeneous curves, so $w_i=1$ for all $i$.
 \begin{cor}\label{cor:Tplusnul}
 For homogeneous $(C,0)$ we have
 $T^1_{\ell}=0$ if $K_{\ell+1}=\mO_{\ell+1}$.
 In particular, for $L_r^n$ with  $\binom{n+d-2}{d-1} <r \leq \binom{n+d-1}{d}$ in general
  position we have $T^1_{\ell}=0$ for $\ell\geq d-1$.
 \end{cor}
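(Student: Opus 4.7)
The plan is to read both statements directly off the exact sequence~\eqref{eq:exact}.

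For the first assertion, the key observation is that since $(C,0)$ is homogeneous, every weight $w_i$ equals $1$, so the ambient module in \eqref{eq:exact} decomposes as
\[
(K/\mO)^n_\ell = \bigoplus_{i=1}^n (K/\mO)_{\ell+1}.
\]
Under the hypothesis $K_{\ell+1} = \mO_{\ell+1}$ this piece vanishes, hence the submodule $(\ker\partial f)_\ell \subset (K/\mO)^n_\ell$ is also zero, and \eqref{eq:exact} immediately yields $T^1_\ell = 0$.

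For the second assertion it suffices to verify $K_m = \mO_m$ for all $m \geq d$ and then apply the first part with $m = \ell+1$. For this I would use the explicit description of the normalisation of $L_r^n$, namely $\overline\mO = \prod_{i=1}^r \k[t_i]$ with $\deg t_i = 1$, whose total ring of fractions is $K = \prod_{i=1}^r \k(t_i)$. Any homogeneous element of $\k(t_i)$ of non-negative degree $m$ is a scalar multiple of $t_i^m \in \k[t_i]$, so $K_m = \overline\mO_m$, of dimension $r$, for every $m \geq 0$. Combined with the Hilbert function $H_\mO(m) = \min(r,\binom{n+m-1}{n-1})$ in general position, the hypothesis $r \leq \binom{n+d-1}{d} = \binom{n+d-1}{n-1}$ forces $H_\mO(m) = r$, and hence $\mO_m = \overline\mO_m = K_m$, for all $m \geq d$.

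There is no real obstacle; the corollary is essentially a bookkeeping consequence of \eqref{eq:exact} together with the description of $\overline\mO$ as a direct product of polynomial rings. The only step that requires a moment's thought is the identification of $K$ with $\overline\mO$ in non-negative degrees.
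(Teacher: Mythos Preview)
Your proof is correct and follows exactly the approach the paper intends: the corollary is stated without proof, as an immediate consequence of the exact sequence~\eqref{eq:exact} together with the Hilbert function of points in general position, and you have simply spelled out those details. The identification $K_m=\overline\mO_m$ for $m\geq0$ and then $\overline\mO_m=\mO_m$ for $m\geq d$ is precisely the fact the paper uses tacitly here and again later (e.g.\ in the proof of Proposition~\ref{prop:tplus}, where it is stated as ``$K_\nu=\mO_\nu$ for $\nu\geq d$'').
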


\section{Buchweitz criterion}
\subsection{}
Buchweitz defined an invariant for curve singularities that varies 
semicontinously in families. Being published only  in a preprint and as second
part of his Thèse \cite{Bu}, this invariant is not very well known. The only place in the literature
we are aware of, that treats Buchweitz' original approach is \cite[\pargr \,6]{KW}.
Buchweitz' definition uses Noether normalisation and differents for finite mappings,
in particular the Kähler and Dedekind differents (see  e.g.  
\cite[\href{https://stacks.math.columbia.edu/tag/0DWH}{Tag 0DWH}]{SP}), and 
actually he defines several invariants.
 Here we use only the Dedekindian one and use 
an alternative form \cite[Remark after Lemma 3.1]{Bu}.

We embed $\omega$ in $K$ via the map $(dt_1,\dots,dt_r)\mapsto(1,\dots,1)$. For any 
two $\mO$-ideals $\mathfrak a,  \mathfrak b$ in $K$  containing a non-zero divisor one has
\[
\Hom_\mO(\mathfrak b,\mathfrak a)=\mathfrak a : \mathfrak b=
\{x\in K\colon x \mathfrak b  \subset \mathfrak a\}, 
\]
where the homomorphism is given 
by multiplication with  $x\in K$. As $\mO\subset \omega$, we get under this
identification $\Hom(\omega,\mO)\subset \mO$.
\begin{defn}
The $k$-th normalised Dedekind invariant of the curve $(C,0)$ is
\[
d_k=d_k(C) = \dim \Coker \Hom(\omega^{\otimes k},\mO) \to \mO
\]
\end{defn}

The following proposition is in \cite{Bu}  a consequence of the definition.
Here we give  a direct proof.

\begin{prop}
For Gorenstein curves $d_k=2k\delta$.
\end{prop}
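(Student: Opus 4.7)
The plan is to use the Gorenstein hypothesis to realize $\omega$ as a principal fractional ideal in $K$, after which $d_k$ becomes a straightforward length computation in $\mO$.

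First, since $C$ is Gorenstein, $\omega$ is an invertible $\mO$-module, hence---$\mO$ being local---free of rank one. Combined with the fractional-ideal embedding $\omega\hookrightarrow K$ and the inclusion $\mO\subset\omega$, this forces $\omega=u^{-1}\mO$ in $K$ for some non-zero-divisor $u\in\mO$. Invertibility also gives $\omega^{\otimes k}\cong\omega^k=u^{-k}\mO$ as fractional ideals, so by the identification $\Hom_\mO(\mathfrak b,\mathfrak a)=\mathfrak a:\mathfrak b$ recalled just above the proposition,
\[
\Hom_\mO(\omega^{\otimes k},\mO)=\mO:u^{-k}\mO=u^k\mO,
\]
with the structure map to $\mO$ being the inclusion of fractional ideals. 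Thus $d_k=\dim\mO/u^k\mO$.

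Next, the filtration $\mO\supset u\mO\supset u^2\mO\supset\cdots\supset u^k\mO$ has successive quotients $u^i\mO/u^{i+1}\mO$ all isomorphic to $\mO/u\mO$ via multiplication by $u^i$ (an isomorphism because $u$ is a non-zero-divisor), yielding $d_k=k\cdot\dim\mO/u\mO$. The same trick gives $\mO/u\mO\cong u^{-1}\mO/\mO=\omega/\mO$, so it remains to show $\dim\omega/\mO=2\delta$.

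Finally, since $\overline\mO\subset\omega$, I split $\dim\omega/\mO=\dim\omega/\overline\mO+\dim\overline\mO/\mO$. The second summand is $\delta$ by definition; the first is also $\delta$, coming from the non-degenerate residue pairing $\omega/\overline\mO\times\overline\mO/\mO\to\k$, $(\alpha,g)\mapsto\sum_P\res_P(g\alpha)$ (well defined because $\alpha\in\overline\mO$ produces no poles and because $g\in\mO$ kills the residue sum by the defining property of $\omega$). This gives $d_k=2k\delta$. The only delicate step is the first one, extracting a concrete principal generator $u^{-1}\in K$ from the abstract invertibility of $\omega$; once $u$ is in hand the rest is purely formal, and the identification $\omega^{\otimes k}\cong\omega^k$---which fails in general for non-invertible $\omega$---is precisely what makes the Gorenstein case so clean.
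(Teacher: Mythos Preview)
Your proof is correct and follows essentially the same route as the paper: use the Gorenstein hypothesis to write $\omega=u^{-1}\mO$ as a principal fractional ideal, identify the image of $\Hom(\omega^{\otimes k},\mO)$ in $\mO$ with $u^k\mO$, and compute the colength $\dim\mO/u^k\mO=2k\delta$. The only variation is in the final length calculation: the paper pins down $u$ as a generator of the conductor and invokes Serre's Gorenstein identity $\dim\overline\mO/\mathcal C=2\delta$, while you reduce by filtration to $\dim\omega/\mO$ and split this via the (general, not Gorenstein-specific) residue duality $\dim\omega/\overline\mO=\delta$---a slightly cleaner separation of the Gorenstein input from the general duality fact.
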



\begin{proof}If $C$ is Gorenstein, the module $\omega$ is free, with 
generator $\alpha$ satisfying $v_P(\alpha) = -c _P$ for all $P\in n^{-1}(0)$, and
$\Hom(\omega^{\otimes k},\mO)$ is generated by the map which sends
$\alpha^k$ to $1\in \mO$. This corresponds to an element $f^k\in \mO$, where
$f$ generates the conductor ideal $\mathfrak c$ in $\widetilde\mO$ and the
image of $\Hom(\omega^{\otimes k},\mO)$ is generated by $f^k$ 
as $\mO$-module. As $\dim \widetilde\mO/\mathfrak c = 2\delta$ for 
 Gorenstein  curves \cite[IV.11]{Se},
we get $\dim  \mO/f^k\mO = \dim \widetilde\mO/\mathfrak c^k -
\dim \widetilde\mO/\mO+\dim f^k \widetilde\mO/f^k\mO= 2k\delta-\delta+\delta
=2k\delta$.
\end{proof}


The essential result proven by Buchweitz is the semicontinuity of the invariant
$b_k:=d_k-2k\delta$. His proof uses Noether normalisation. It is written 
in the context of complex analytic curve germs, but it is remarked that the proof
also works in the formal category of complete Noetherian rings.

\begin{thm}[Buchweitz]
Let $\pi \colon \mathcal C \to T$ be a 1-parameter deformation of a reduced 
curve singularity $C$, then 
$b_k(C) \leq b_k(\mathcal C_t)$,
where $\mathcal C_t$ is the generic fibre, and the invariant for $\mathcal C_t$ is the
sum of the invariants of its singular points.
\end{thm}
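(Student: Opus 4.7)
The plan is to follow Buchweitz's original strategy: reduce to the case where $T=\Spec\Lambda$ with $\Lambda$ a discrete valuation ring, introduce a Noether normalisation compatible with the family, and express $d_k-2k\delta$ through the Dedekind different of this normalisation. The semicontinuity will then follow from the elementary semicontinuity of fibre lengths of a finitely generated $\Lambda$-module, exploiting that the Dedekind different of a finite flat generically étale map commutes with base change, even though $\Hom(\omega^{\otimes k},\mO)$ itself does not.

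\textbf{Setup.} After restricting to a one-parameter section of $T$ one may assume $\Lambda$ is a DVR. Embed $\mathcal C\hookrightarrow\A^n_T$ and pick a generic linear projection $\A^n_T\to\A^1_T$ so that $\pi\colon(\mathcal C,0)\to(\A^1_T,0)$ is finite, flat, and generically étale on every fibre. Set $R=\widehat{\mO}_{\A^1_T,0}$ and $A=\mO_{\mathcal C,0}$. Then $R\to A$ is a finite flat $\Lambda$-algebra whose closed and generic fibres are Noether normalisations of $C$ and $\mathcal C_t$ respectively.

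\textbf{Translation into the Dedekind different.} The trace form identifies $\omega_{A/R}=\Hom_R(A,R)$ with the inverse Dedekind different $\mathfrak d^{-1}$ inside the total ring of fractions $K$. Since $R$ is regular and $\omega_R=R\,dt$ is trivial, Grothendieck duality for finite flat maps gives $\omega\cong\omega_{A/R}=\mathfrak d^{-1}$ under the embedding $\omega\hookrightarrow K$ used to define $d_k$. Thus $\Hom(\omega^{\otimes k},A)=\mathfrak d^k$ as $A$-ideals in $K$, and $d_k=\dim_k A/\mathfrak d^k$. The analogous identity applied to the regular $R$-algebra $\overline A$ (where the different becomes locally principal) yields $\dim_k\overline A/\mathfrak d^k\overline A=2k\delta$, so $2k\delta-d_k$ is realised as the $k$-length of a single explicit coherent $A$-module sandwiched between $A$ and $\overline A$ modulo $\mathfrak d^k$.

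\textbf{Semicontinuity and main obstacle.} The construction globalises over $T$ to a relative Dedekind different $\mathfrak D$ in the total ring of fractions of $A$; by base-change compatibility of the trace form for finite flat generically étale maps, $\mathfrak D$ specialises fibrewise to $\mathfrak d$. The quantity $2k\delta-d_k$ is then, on each fibre, the $k$-dimension of one and the same finitely generated $\Lambda$-module $M$, and the elementary torsion-decomposition inequality $\dim_{k(0)}M\otimes k(0)\geq\dim_{k(\eta)}M\otimes k(\eta)$ gives the desired upper semicontinuity of $2k\delta-d_k$, equivalently the inequality $d_k(C)-2k\delta(C)\leq d_k(\mathcal C_t)-2k\delta(\mathcal C_t)$. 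The main obstacle is producing a $\Lambda$-flat simultaneous normalisation $\overline A\supset A$ so that the formula for $2k\delta-d_k$ really globalises; this is classical for a one-parameter deformation of a reduced curve singularity with reduced generic fibre, but is the step that forces one to work in the complete/Henselian setting (and possibly after a finite base change of $\Lambda$) as in Buchweitz's original formulation.
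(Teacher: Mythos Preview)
The paper does not give its own proof of this theorem: it is stated as a result of Buchweitz and attributed to his thesis \cite{Bu}, with only the remark that ``His proof uses Noether normalisation. It is written in the context of complex analytic curve germs, but it is remarked that the proof also works in the formal category of complete Noetherian rings.'' There is therefore no proof in the paper to compare your attempt against.

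That said, your sketch is faithful to what the paper reports about Buchweitz's method: the reduction to a DVR base, a relative Noether normalisation, identification of $\omega$ with the inverse Dedekind different, and the semicontinuity of fibre lengths of a finitely generated module over the base. The paper also notes earlier that Buchweitz's definition of $d_k$ itself is via Noether normalisation and differents, with the $\Hom(\omega^{\otimes k},\mO)$ description used here only as an ``alternative form''. So your translation step is exactly the bridge back to Buchweitz's original language.

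One point deserves more care than you give it. You assert that $\dim_\k \overline A/\mathfrak d^k\overline A=2k\delta$ and that $2k\delta-d_k$ is the length of a single explicit module ``sandwiched between $A$ and $\overline A$ modulo $\mathfrak d^k$''. The second claim does not follow formally from the first: from $\ell(A/\mathfrak d^k)=d_k$ and $\ell(\overline A/A)=\delta$ one gets $\ell(\overline A/\mathfrak d^k)=\delta+d_k$, not $2k\delta$. What one actually needs is the comparison with the different of $\overline A$ over $R$ (which is principal) together with the transitivity/conductor relations between $\mathfrak d_{A/R}$, $\mathfrak d_{\overline A/R}$, and the conductor $\mathcal C$; this is where Buchweitz's argument does real work. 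Your final paragraph correctly flags the need for a simultaneous normalisation over $\Lambda$, but the algebraic identity expressing $2k\delta-d_k$ as a single length is an equally essential (and not entirely trivial) ingredient that your sketch leaves implicit.
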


\begin{cor}\label{cor:rb-crit}
If $C$ is deformable into a curve with at most Gorenstein singularities (in particular,
if $C$ is smoothable), then $d_k\leq 2k\delta$ for all $k\in \N$.
\end{cor}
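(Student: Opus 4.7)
The plan is to reduce the claim to the one-parameter statement already proved by Buchweitz, and then to evaluate the right-hand side using the preceding proposition that $d_k = 2k\delta$ for Gorenstein curves.

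First I would pick a deformation $\pi \colon \mathcal{C} \to T$ of $(C,0)$ whose generic fibre has at most Gorenstein singular points. If $T$ has dimension greater than one, I would cut down to a one-parameter family by restricting over the image of a sufficiently general morphism $(T',0)\to (T,0)$ from the germ of a smooth curve. By the flatness of $\pi$ and the genericity of the slice, the generic fibre of the restricted family $\pi' \colon \mathcal{C}' \to T'$ is still (a union of Zariski open subsets of) generic fibres of $\pi$, hence still has at most Gorenstein singularities, and the central fibre is still $(C,0)$. This puts us in the setting of Buchweitz's semicontinuity theorem.

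Next I would apply the theorem to get
\[
d_k(C) - 2k\delta(C) \leq d_k(\mathcal{C}'_t) - 2k\delta(\mathcal{C}'_t),
\]
where by convention the right-hand side is the sum over the singular points of the generic fibre. Since the invariants $d_k$ and $\delta$ both vanish at a smooth point, only the finitely many singular points $p_1,\dots,p_s$ of $\mathcal{C}'_t$ contribute, and the right-hand side is $\sum_{i=1}^{s}\bigl(d_k(\mathcal{C}'_t, p_i) - 2k\delta(\mathcal{C}'_t, p_i)\bigr)$. By hypothesis each $(\mathcal{C}'_t, p_i)$ is Gorenstein, so by the preceding proposition $d_k(\mathcal{C}'_t, p_i) = 2k\delta(\mathcal{C}'_t, p_i)$ and every summand is zero. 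Therefore $d_k(C) - 2k\delta(C) \leq 0$, i.e.\ $d_k(C) \leq 2k\delta(C)$, which is the claim.

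For the parenthetical smoothable case there is nothing extra to do: if $(C,0)$ is smoothable, then the generic fibre is smooth, so there are no singular points to sum over, the right-hand side is $0$ a fortiori, and the inequality follows. The only mildly delicate point is the reduction to a one-parameter base; this is standard but one should be careful to verify that the general one-parameter slice through $0\in T$ really does meet the smooth (or Gorenstein) locus of the total space away from the central fibre, which follows from the openness of these loci in the versal deformation together with the fact that they were nonempty by hypothesis.
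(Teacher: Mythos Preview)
Your argument is correct and is exactly the intended one: the paper states the corollary immediately after Buchweitz's semicontinuity theorem without proof, because it follows at once by combining that theorem with the preceding proposition that $d_k=2k\delta$ for Gorenstein curves. Your only addition is spelling out the reduction to a one-parameter base, which the paper leaves implicit since the theorem is already formulated for $1$-parameter deformations.
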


\begin{ex} \label{ex:buchcurv}
The smallest example of a monomial curve, where $d_1>2\delta$, is the curve
with semigroup $\langle 13,\dots,18, 20,22,23 \rangle$. It occurs already
in the original paper by Buchweitz \cite{Bu}.
In this case $c=26$, and $\delta=16$.
A basis for $\omega_X/\Omega^1_{\widetilde X}$ is
\[
\frac{dt}{t^{26}}, \frac{dt}{t^{25}},\frac{dt}{t^{22}},\frac{dt}{t^{20}},
\frac{dt}{t^{13}},\dots,\frac{dt}{t^{2}}
\]
where the first four are generators of $\omega$,
so if $\varphi \in \Hom(\omega,\mO)$ maps $\frac{dt}{t^{26}}$ to $t^l$, then 
$\varphi(\frac{dt}{t^{25}})=t^{l+1}$, $\varphi(\frac{dt}{t^{22}})=t^{l+4}$ and
$\varphi(\frac{dt}{t^{20}})=t^{l+6}$. This means that with $t^l$
also $t^{l+1}$, $t^{l+4}$ and $t^{l+6}$ have to be elements of $\mO$.
The smallest possible value for $l$ is therefore $14$, followed by $16$, $22$ and 
all values from $26$ on. Such a $\varphi$ corresponds to $t^{l+26}\in \mO$
and therefore $d_1=52-3-16=33>2\cdot16=2\delta$.

We describe some deformations of the curve.
Its equations can be given in a redundant way by $2\times 2$ minors of the following
matrix, where the dots cannot be filled in for reasons of degree. Only minors
that do not contain dots lead to equations.
\[
\begin{vmatrix}
x_{13}& x_{14}& x_{15} & x_{16} & x_{17} & x_{18} & x_{20} & x_{22} & x_{23}\\
x_{14}& x_{15} & x_{16} & x_{17} & x_{18} & .        & .      & x_{23}& .\\
x_{15} & x_{16} & x_{17} & x_{18} & .        & x_{20} & x_{22}& .   &. \\
 x_{16} & x_{17} & x_{18} & .        & x_{20} &. & x_{23}& .   &x_{13}^2\\
 x_{17} & x_{18} & .        & x_{20} & . &x_{22}& .   &x_{13}^2 & x_{13}x_{14}
 \end{vmatrix}
\]
Perturbations of the entries of the matrix lead to deformations of the curve.
In this way we can describe the deformation of lowest degree, by
replacing   $x_{13}^2$  with $x_{13}^2+sx_{13}$ and replacing 
 $x_{13}x_{14}$  with $x_{13}x_{14}+sx_{14}$.  
The  tangent cone of the deformed curve for $s\neq 0$ is obtained by 
 replacing  $x_{13}^2$ with $sx_{13}$,
 $x_{13}x_{14}$ with $sx_{14}$. The resulting  equations describe a  $L_{13}^9$.
 Note that this deformation lies on the cone over a  projected  rational normal curve,
 with $x_{13}=u^{10}$, $x_{14}=u^9v$,  $x_{15}=u^8v^2$,  $x_{16}=u^7v^3$, 
  $x_{17}=u^6v^4$, $x_{18}=u^5v^5$,  $x_{20}=u^3v^7$,   $x_{22}=uv^9$ 
  and $x_{23}=v^{10}$.
  The deformation is then the image of the curve $u^{23}+su^{13}-v^{13}$.
As the tangent cone has no deformations of positive weight, the deformed singularity
is in fact isomorphic to this tangent cone. By semicontinuity of Buchweitz' invariant this is 
a particular $L_{13}^9$, for which $d_1>2\delta$. A direct computation for $s=1$ shows
that $T^1$ of this $L_{13}^9$
is concentrated in degree 0, of dimension $24$ equal to the number of moduli of points. By 
Theorem \ref{thm:smooth-gale} we know that the general 
$L_{13}^9$ is not  smoothable. 
We have here however a rather simple to describe specific example. 
\end{ex}

The computation of $d_1$ can be done in general for $L_n^r$ with $r\leq \binom{n+1}2$.

\begin{prop}
Let $L_r^{r-g}$ be the cone over $r$ points in $\P^{r-g-1}$ in uniform position, with
$r\geq g + \frac{1+\sqrt{8g+1}}2$, and $g\geq 4$.
If the $L_r^{r-g}$ is smoothable, then 
the Gale transform of the $r$ points  in $\P^{g-1}$ imposes at most $3g-3$ conditions on quadrics.
\end{prop}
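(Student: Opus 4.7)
The plan is to combine the Greuel vanishing (Corollary \ref{cor:Tplusnul}) with Theorem \ref{thm:smooth-gale} and lower semicontinuity of the number of conditions imposed on quadrics.

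Set $n=r-g$. Squaring the hypothesis $r\geq g+\tfrac{1+\sqrt{8g+1}}{2}$ gives $n(n-1)\geq 2g$, and adding $2n$ yields $n(n+1)\geq 2r$, so $r\leq\binom{n+1}{2}$; the condition $g\geq 4$ also forces $r>n$. Corollary \ref{cor:Tplusnul} with $d=2$ then gives $T^1(L_r^n)_{\ell}=0$ for every $\ell\geq 1$, so $L_r^n$ admits no nontrivial deformations of strictly positive degree. Thus any smoothing globalises in Pinkham's sense: the $r$ points $\Gamma\subset\P^{n-1}$ lie in the closure of the locus of hyperplane sections of smooth non-special curves of genus $g$ in $\P^n$.

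I would then choose a one-parameter family $\{C_t\}_{t\in T}$ of such smooth curves with hyperplane sections $\Gamma_t$ specialising to $\Gamma_0=\Gamma$. The $n\times r$ matrix $P_t$ of homogeneous coordinates of $\Gamma_t$ has maximal rank $n$ near $0$, because $\Gamma$ itself spans $\P^{n-1}$ by uniform position. Hence the Gale transforms $\Gamma_t^\vee\subset\P^{g-1}$, determined algebraically by $P_tQ_t^t=0$, form a continuous family whose limit is the Gale transform $\Gamma_0^\vee$ of our point set. By Theorem \ref{thm:smooth-gale}, for each $t\neq 0$ the configuration $\Gamma_t^\vee$ is contained in the canonical image of $C_t$ in $\P^{g-1}$.

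The geometric core is that any canonical image of a smooth genus $g$ curve imposes at most $3g-3$ conditions on quadrics of $\P^{g-1}$: in the non-hyperelliptic case this is the Riemann--Roch identity $h^0(C_t,\omega_{C_t}^{\otimes 2})=3g-3$, while in the hyperelliptic case the image is a rational normal curve of degree $g-1$, imposing $2g-1\leq 3g-3$ conditions for $g\geq 2$. Since $\Gamma_t^\vee$ is contained in this image, it imposes at most $3g-3$ conditions on quadrics for every $t\neq 0$. The number of conditions imposed by a point set on $H^0(\P^{g-1},\mO(2))$ is lower semicontinuous in $t$, so the bound passes to $t=0$, giving the inequality for $\Gamma^\vee$.

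The main obstacle is the family-theoretic bookkeeping of the second paragraph: one must ensure that $\{C_t\}$ can be arranged so that $C_t$ is smooth, its hyperplane section $\Gamma_t$ consists of $r$ reduced points spanning $\P^{n-1}$, and the Gale transform map on point configurations is continuous on a neighbourhood of $0$. Once that is set up, the rest is the standard semicontinuity of Hilbert functions applied to the family $\{\Gamma_t^\vee\}$.
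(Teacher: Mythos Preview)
Your argument is essentially correct but follows a genuinely different route from the paper's. The paper proves the proposition directly from the Buchweitz criterion (Corollary~\ref{cor:rb-crit}): it computes $\delta=r+g-1$ from Lemma~\ref{lem:delta}, then identifies the image $\mathcal I$ of $\Hom(\omega,\mO)$ in $\mO$ explicitly. Since $\mO_2=K_2$, one has $\m^4\subset\mathcal I\subset\m^2$, and setting $s=\dim\m^3/\mathcal I$ the inequality $d_1\leq 2\delta$ becomes $s\leq 3g-3$. The heart of the paper's proof is then a direct linear-algebra identification: the matrix $Q$ of the Gale transform is exactly the matrix of coefficients of generators of $\omega$, and the condition that multiplication by $f\in\mO_3$ sends $\omega$ into $\mO$ becomes $QFQ^t=0$ with $F=\operatorname{diag}(f_1,\dots,f_r)$; the coefficient matrix of this linear system on the $f_i$ is the Veronese matrix $v_2$ of the Gale transform, so $s$ is precisely the number of conditions the Gale transform imposes on quadrics.

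Your approach instead deduces the proposition from Theorem~\ref{thm:smooth-gale} via Pinkham's theory and semicontinuity, which the paper itself remarks is the stronger statement (``this result is weaker than Theorem~\ref{thm:smooth-gale}''). The trade-off: the paper's argument is entirely local and self-contained within the Buchweitz framework, needs no family or limiting step, and as a by-product exhibits the exact relationship between the Dedekind invariant $d_1$ and the Gale transform. Your argument is more geometric and conceptually transparent, but imports Pinkham's globalisation and the family bookkeeping you flag in your last paragraph; it also does not reveal that the bound $3g-3$ is literally the Buchweitz inequality $d_1\leq 2\delta$ rewritten.
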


\begin{proof}
The assumption on $r$  is equivalent to $r\leq \binom{r-g+1}2$, so by
Lemma \ref{lem:delta} we have $\delta (L_r^{r-g})=r+g-1$. 
Let $\mathcal I$ be the image of $\Hom(\omega,\mO)$ in $\mO$. 
As $\mO_2=K_2$ we have $\m^4\subset \mathcal I\subset \m^2$. 
Let $s=\dim \m^3/\mathcal I$. Smoothability implies  
$d_1\leq 2\delta$, so $1+r-g+r+s\leq 2r+2g-1$, that is $s\leq 3g-3$.

Let $P$ be the matrix describing the points and $Q$ the matrix describing the Gale transform,
so $PQ^t=0$. Then $Q$ is also the matrix of coefficients of the generators of $\omega$,
considered as elements of $\overline\Omega\otimes K$. Multiplying  
a generator $\alpha$ of $\omega$ with $f=(f_1,\dots,f_r)\in \mO_3$  leads to a vector of coefficients,
which should be a linear combination of the rows of $P$, so its transpose should lie
in the kernel of the matrix $Q$. We obtain therefore the matrix equation $QF Q^t=0$, where 
$F=\operatorname{diag}(f_1,\dots,f_r)$. Considered
as linear equations for the $f_1,\dots,f_r$ the coefficient matrix becomes, after deleting identical rows,
 a $\binom{g+1}2
\times r$ matrix, where the columns are the coordinates of the $r$ points in the Gale transform
under the second Veronese embedding $v_2$. This is the matrix describing the conditions
on quadrics, and $s$ is its rank.
\end{proof}

By this proposition the Gale transform of the $r$ points lies on at least $(g-2)(g-3)/2$
linearly independent quadrics.
By Max Noether's Theorem  a non-hyperelliptic canonical curve of genus $g$
lies on exactly this number of quadrics \cite[p. 117]{ACGH}. So the previous
result is weaker than Theorem  \ref{thm:smooth-gale}, but it suffices to show
that the $L_{10}^6$ of Example \ref{ex:L106} is not smoothable. Therefore
this non-smoothability also follows from Buchweitz' criterion.
 
 \begin{ex}
 The following example  of a non-smoothable monomial curve with $d_1=2\delta$ but $d_2=4\delta +1$ is due to Komeda \cite[Example 2.4 (2)]{Ko}. Consider
 the semigroup
 \[
 \langle r,r+1,\dots,2r-8,2r-7, 2r-4,2r-3\rangle
 \]
 of genus $g=r+3$ with conductor $2r$. The pole orders of differentials are
 \[
 2r, 2r-1,\;2r-4,2r-5,\;r,r-1,\dots,2
 \]
One checks easily that for $r\geq 12$ multiplication with 
\[t^{3r},\dots,t^{4r-12},t^{4r-8},t^{4r-4},t^{4r},t^{4r+1},
\dots
\]
 maps $\omega$ into $\mO$, giving that $d_1=3r+9 - (r+3)=2\delta$. The pole orders of
 quadratic differentials, relevant for the computation of $d_2$ are $4r,\dots, 4r-10$
 with $4r-3$ and $4r-7$ excepted. Therefore only multiplication by $t^{5r},\dots, t^{5r-17}$ and 
 from $t^{6r}$ onwards works, giving $d_2=4r+13$ for $r\geq 16$. 
 
  The  monomial curve lies on the cone $x_r=u^{r-3}$, 
  $x_{r-1}=u^{r-4}v$, \dots, $ x_{2r-7}=u^4v^{r-7}$, $x_{2r-4}=uv^{r-4}$ 
  and $x_{2r-3}=v^{r-3}$ as
  the image of $u^{2r-3}=v^r$.
  It deforms into the  $L_r^{r-4}$ one the cone, which is the image of $u^r=v^r$. The point matrix 
  $P$ involves powers of the $r$-th roots of unity and the matrix $Q$ of the Gale transform also.
  The Gale transform of the $r$ points lies on a non-degenerate quadric, but not on a curve
  of type $(3,3)$ if $r\geq 16$. Indeed, as this $L_r^{r-4}$ with $r\geq 16$ is not smoothable,
 the Gale transform cannot lie on a canonical curve; note that the lower bound 16 is sharp, as
 15 points determine a curve  of type $(3,3)$.
 \end{ex}
 
\subsection{} The known non-smoothable monomial curves have non-trivial deformations of
negative weight. By openness of versality the curves occurring in this way are also not
smoothable. Buchweitz' original example is the smallest, with $g=16$, but there is
an example with smaller embedding dimension having $g=17$ \cite{Ko}.  For computations with
explicit equations this is easier.

\begin{ex}
Consider the 
semigroup of embedding dimension 8 with generators
$\langle 13,\dots,18, 21,23 \rangle$ \cite[Example 2.1 (2)]{Ko}.
We do not give equations for the curve here, as their structure is  similar to that in Example 
\ref{ex:buchcurv}. The equations can be given in \textit{rolling factors format}, see
e.g., \cite[Ch. 12]{St4}.
There are quadratic equations expressing that the curve lies 
 on the cone over a  projected  rational normal curve,
 with $x_{13}=u^{10}$, $x_{14}=u^9v$,  $x_{15}=u^8v^2$,  $x_{16}=u^7v^3$, 
  $x_{17}=u^6v^4$, $x_{18}=u^5v^5$,  $x_{21}=u^2v^8$ 
  and $x_{23}=v^{10}$. Furthermore there are rolling factors equations obtained 
  from the equation $u^{23}-v^{13}=0$ of the curve on the cone. 
  
  A  rolling factor deformation is induced by $u^{23}+su^{13}-v^{13}$,
   to a $L_{13}^8$ lying on the cone.
  More rolling factor deformations can be obtained 
   from the deformation $u^{23}+su^{13-k}v^k-v^{13}$.
    Writing this last expression as
 \[
 \frac{(u^{10+k }+sv^k)(su^{13-k}-v^{13-k})}s-\frac{u^{10+k}v^{13-k}}s
 \]
  we see that the curve consists of $13-k$ smooth branches and a singular branch of  multiplicity $k$.
  
  In particular, for $k=2$ a direct computation with Singular  \cite{Sing} shows that it is a singularity
  with $\dim T^1 = 28$, that is the same dimension as for the general $L_{13}^8$. 
  This is maybe unexpected, but one has to keep in mind 
 that the singularity
  is not quasi-homogeneous, and there is no $\G_m$-action on the base space.
  
  If we take $s=-1$ and consider the $11$ lines $u^{11}+v^{11}$ together with the curve,
  which is the image of $u^{12}-v^2$, so the curve parametrised by
  $(x_{13},\dots,x_{23})=(t^2, t^3,t^4,t^5,t^6,t^7,t^{10},t^{12})$, 
  then it is a trivial deformation of the previous curve, but
  the computation of $T^1$
  does not finish in reasonable time. However it is possible to compute the  generators of the 
  normal sheaf  $ \Hom_{\mO_n}(I,\mO)$,
   and conclude that there are no non-trivial infinitesimal deformations 
  with linear part, and that the number of generators apart from those coming from coordinate 
  transformations is equal to $28$. 
  The same holds for a curve consisting of an ordinary cusp and $11$ lines in general
  position. This shows:
  \begin{prop}\label{prop:cusplines}
  The general curve consisting of  an ordinary cusp and $11$ lines in general
  position through the origin in    $\A^8$ is not smoothable. It deforms only into 
  curves $L_{13}^8$.
  \end{prop}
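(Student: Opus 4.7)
The plan is to combine the explicit rolling factor deformation constructed above with the Singular normal sheaf computation, and then use a tangent-cone argument to rule out specialisation from $L_{13}^8$.

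For non-smoothability I would exhibit a chain of deformations starting from Komeda's monomial curve $\k[\S]$ with $\S = \langle 13,\ldots,18,21,23 \rangle$. The first step is the specialisation at $s = -1$ of the rolling factor family $u^{23} + s u^{11} v^2 - v^{13}$, producing the $11$ lines $u^{11} + v^{11} = 0$ together with the cuspidal branch $u^{12} = v^2$ on the cone over the projected rational normal curve. The second step is the equi\-singular deformation moving the $11$ lines off the cone into general position in $\A^8$, which exists because the normal sheaf produces moduli directions for the lines. Since $\k[\S]$ is not smoothable, openness of versality (as invoked earlier in the paper) propagates non-smoothability through both steps, yielding the first assertion.

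For the assertion that the singularity deforms only into $L_{13}^8$, I would use the normal sheaf computation giving $\dim T^1 = 28$ with no deformation of strictly negative weight, matching $\dim T^1(L_{13}^8) = 28$. Deformations of the $11$ smooth branches only change their positions, never their type, while the cusp admits a unique non-equi\-singular deformation direction, namely the classical embedded smoothing $x_2^2 = x_1^3 + t x_1^2$ opening it into two transverse smooth branches. Combining that single non-equi\-singular direction with the unchanged $11$ lines produces an $L_{13}^8$; every other direction preserves the cusp-plus-$11$-lines type. Hence $L_{13}^8$ is the only non-trivial nearby fibre type.

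For the assertion that cusp plus $11$ lines is not a specialisation of $L_{13}^8$, I would argue via the Hilbert scheme closure. Since $\dim T^1(L_{13}^8) = 28$ coincides with the dimension $13 \cdot 7 - \dim PGL_8 = 28$ of the moduli of $13$-line configurations through the origin in $\A^8$, the versal deformation of a generic $L_{13}^8$ is, up to trivial deformations, precisely this moduli space (smooth at our point), so every nearby fibre is itself a scheme of $13$ lines through the origin. The flat limit of two colliding lines $(x_2 - \alpha x_1)(x_2 - \beta x_1) \to x_2^2$ as $\alpha, \beta \to 0$ is a doubled line (non-reduced), never a reduced cusp: the cubic relation $x_2^2 = x_1^3$ simply cannot appear in a flat limit of reduced lines. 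Hence cusp plus $11$ lines does not lie in the closure of the $L_{13}^8$ locus. The hardest point here is to make this last claim fully rigorous — ruling out any exotic degeneration of $L_{13}^8$ through non-reduced intermediate fibres that could conceivably recover a reduced cusp. A clean route is to use the scheme-theoretic tangent cone at the origin as an upper semi\-continuous invariant along flat families and compare its reduced $13$-point structure for $L_{13}^8$ with the non-reduced one of cusp plus $11$ lines.
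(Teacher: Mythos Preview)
Your overall approach is sound and matches the paper's reasoning, which is extremely terse: the paper simply records the normal sheaf computation (no infinitesimal deformations with linear part, 28 generators modulo coordinate vector fields) for both the on-cone curve and the general cusp-plus-11-lines, and then writes ``This shows'' before stating the proposition. So you are really filling in what the paper leaves implicit. Your route to non-smoothability via the chain Komeda $\leadsto$ on-cone cusp+11 lines $\leadsto$ general cusp+11 lines together with openness of versality is a legitimate alternative; the paper instead (implicitly) deduces non-smoothability from part~(b): since the curve deforms only into $L_{13}^8$, and the general $L_{13}^8$ is already known to be non-smoothable, the cusp-plus-lines curve cannot be smoothable either. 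Your part~(b) is fine; note only that ``no deformation of strictly negative weight'' is not quite the right phrase for a non-quasi-homogeneous curve --- the paper's ``no infinitesimal deformations with linear part'' is what is actually computed and what forces the multiplicity to stay $13$, so the cusp can only go to a node, never smooth completely.

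For part (c) your first sentence already contains the correct argument, but you then undermine it with the tangent-cone digression, which does \emph{not} work: a non-reduced tangent cone (double line plus 11 lines) is perfectly well a flat limit of reduced ones (13 distinct lines with two of them colliding), so semicontinuity of the tangent cone gives no obstruction. The clean argument stays entirely on the $T^1$ side. Suppose a one-parameter family had special fibre cusp+11 lines and all other fibres isomorphic to a \emph{fixed} $L_{13}^8$. Pull back to the 28-dimensional versal base $B$ of cusp+11 lines. By openness of versality, at any point $b$ of $B$ with fibre an $L_{13}^8$ the germ $(B,b)$ is miniversal for that $L_{13}^8$; since $\dim T^1(L_{13}^8)=28=\dim B$ and $T^1(L_{13}^8)$ is concentrated in degree~$0$ (the moduli directions of the 13 points), the Kodaira--Spencer map at $b$ is an isomorphism onto the tangent space of the moduli of 13-line configurations. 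Hence a curve in $B$ along which the isomorphism class of the fibre is constant has vanishing Kodaira--Spencer map and is therefore constant. This forces the classifying map $T\to B$ to be constant on $T\setminus\{0\}$, hence constant, contradicting that the fibre over $0$ is cusp+11 lines. You had all the ingredients for this; drop the Hilbert-scheme/tangent-cone paragraph and the argument is complete.
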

\end{ex}

\subsection{}
For monomial curves the Buchweitz criterion can be expressed in terms of the semigroup.
Recall that $\L$ is the set of gaps. Denote by $k\L$ the $k$-fold sumset $\L+\dots+\L$
and by  $|k\L|$ its cardinality.

\begin{lemma}[Buchweitz] For  a monomial curve of multiplicity at least three
\[ \dim \Coker \left\{\Hom(\omega^{\otimes k},\mO) \to \mO\right\} =
|(k+1)\L|+(2k+1)-\delta\]
\end{lemma}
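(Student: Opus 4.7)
The plan is to realise the cokernel as a quotient of monomial fractional ideals and count with the semigroup.

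Using the embedding $\omega\hookrightarrow K$, $dt\mapsto 1$ from \pargr\,2, one has $\Hom(\omega^{\otimes k},\mO)\cong \mO:\omega^{k}$, where $\omega^{k}$ denotes the product of fractional ideals in $K$, and the map to $\mO$ is the inclusion. The Rosenlicht description gives $\omega=\bigoplus_{n\in E}\k\,t^{n}$ with
\[
E \;=\; \N\cup\{-(\ell+1):\ell\in L\},
\]
so $\omega^{k}=\bigoplus_{r\in kE}\k\,t^{r}$, where $kE$ denotes the $k$-fold sumset.

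First I would determine $kE$. Splitting each sum of $k$ elements of $E$ into its negative and non-negative parts, and using that $L^{*}:=L+1$ has maximum $c=F+1$, one obtains
\[
kE \;=\; \{\,r\geq -(k-1)c\,\}\;\cup\;(-kL^{*}).
\]
Then $t^{a}\in\mO:\omega^{k}$ iff $a+r\in\S$ for every $r\in kE$: the thick part forces $a\geq kc$, while the discrete part requires $a-m\in\S$ for every $m\in kL^{*}=kL+k$, equivalently $a-k\notin(k+1)L$. Hence the exponent set of the cokernel is the disjoint union of $\S\cap[0,kc-1]$ and $\{b+k:b\in(k+1)L,\,b\geq kF\}$, giving
\[
\dim\Coker \;=\; (kc-\delta)\;+\;\bigl|(k+1)L\cap[kF,(k+1)F]\bigr|.
\]

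The remaining step is the combinatorial inclusion
\[
(k+1)L\;\supseteq\;[\,k+1,\,kF-1\,],
\]
valid when the multiplicity is at least $3$. Granting this, $|(k+1)L\cap[k+1,kF-1]|=kF-k-1$, so $|(k+1)L\cap[kF,(k+1)F]|=|(k+1)L|-(kF-k-1)$; substituting and using $c=F+1$ yields the claimed $|(k+1)L|+(2k+1)-\delta$.

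The main obstacle is this combinatorial inclusion. For $k=1$, if $n\in[2,F-1]$ were not in $L+L$ then $n-\ell\in\S$ for every $\ell\in L\cap[1,n-1]$; the involution $i\leftrightarrow n-i$ on $[1,n-1]$ would then embed $L\cap[1,n-1]$ into its complement, forcing $|L\cap[1,n-1]|\leq(n-1)/2$. Combined with $\{1,\dots,m-1\}\subseteq L$ (so $n\geq 2m-1$) and the fact that $\S$ cannot contain $m$ consecutive integers inside $[1,F]$ (else semigroup closure would force a full tail), this yields a contradiction. For $k\geq 2$ one proceeds by induction: $(k+1)L\supseteq L+kL\supseteq\bigcup_{\ell\in L}[\ell+k,\,\ell+(k-1)F-1]$, and these intervals cover $[k+1,\,kF-1]$ because consecutive gaps of $L$ in $[1,F]$ differ by at most $m$.
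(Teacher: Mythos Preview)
Your approach is the same as the paper's: identify $\Hom(\omega^{\otimes k},\mO)$ with the colon ideal $\mO:\omega^k$, compute its exponent set, and reduce everything to the combinatorial inclusion $(k+1)L\supseteq[k+1,kF-1]$. Your determination of $kE=[-(k-1)c,\infty)\cup(-kL^*)$ and the ensuing count are correct and match the paper's computation.

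The gap is in your argument for the inclusion when $k=1$. The paper does not prove this case either---it cites \cite[4.2]{Bu} and \cite[Thm.~(1.3)]{Ol}---but your sketch does not close. From $n\notin 2L$ the involution $\ell\mapsto n-\ell$ correctly yields $|L\cap[1,n-1]|\leq(n-1)/2$ and hence $n\geq 2m-1$; it is also true that $\S\cap[1,F]$ contains no $m$ consecutive integers. But these three facts alone do \emph{not} produce a contradiction: the last one only gives a lower bound of order $n/m$ on $|L\cap[1,n-1]|$, which for $m\geq 3$ and large $n$ is perfectly compatible with the upper bound $(n-1)/2$. Concretely, using $\ell=1,\dots,m-1$ one gets $n-1,\dots,n-m+1\in\S$; if $n\in\S$ this does give $m$ consecutive elements and a contradiction, but if $n\in L$ one only concludes $n-m\in L$ and the argument stalls. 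The actual proof (in the cited references) requires more of the semigroup structure.

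Your induction step for $k\geq 2$ also needs a small repair: for the intervals $[\ell+k,\ell+(k-1)F-1]$ over $\ell\in L$ to cover $[k+1,kF-1]$, consecutive gaps $\ell<\ell'$ must satisfy $\ell'-\ell\leq(k-1)F-k$, and the bound $\ell'-\ell\leq m$ does not guarantee this when $k=2$ and $F<m+2$ (e.g.\ $\S=\langle3,4,5\rangle$, where $m=3$, $F=2$). Using just the three gaps $1,2,F\in L$ (available since $m\geq 3$) and the induction hypothesis suffices to cover the interval for every $k\geq 2$.
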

\begin{proof}
Multiplication by  $t^l\in \mO$ with $l\geq kc$
is the homomorphism  $\vp_l\colon \omega^{\otimes k} \to
\widetilde {\mO}$ given by $\vp_l\left(\frac{(dt)^k}{t^a}\right)=t^{l-a}$. The image of this map
is not contained in $\mO$ if and only if
$l-a\notin \Gamma$ for some $a$. By the description of $\omega$
we can write $a-k=(a_1-1)+\dots+(a_k-1)$
with $a_i-1\notin \Gamma$. If for some $i$ one has $a_i-1\notin \L$, that is $a_i\leq0$,
then $a\leq (k-1)c$ and $\vp_l\left(\frac{(dt)^k}{t^a}\right)=t^{l-a}\in \mO$ as $l-a\geq c$.
Therefore the image of $\vp_l$ is not  contained in $\mO$ if and only if
$l-k$ can be written as the sum of  $k+1$ elements  of $\L$.
For $l=2k+1,\dots, kc-1$ we have that $l-k=k+1,\dots,kc-(k+1)$ and all these
numbers belong to $(k+1)\L$: as $c-1$ is the maximal element of $\L$, this statement
follows by induction from the case $k=1$, which is true by \cite[4.2]{Bu}, \cite[Thm. (1.3)]{Ol}. 

So $d_k+\delta$ is equal to the number of $t^l\in\widetilde {\mO}$ with $l<kc$
plus the number of elements in $(kc-k+\N)\cap(k+1)\L$; this sum is equal
to $|(k+1)\L|+(2k+1)$.
\end{proof}

\begin{cor}
For  a monomial curve of multiplicity at least 3 one has
$d_k>2k\delta$ if and only if $|(k+1)\L|>(2k+1)(\delta-1)$.
\end{cor}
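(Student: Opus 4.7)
The plan is to obtain the corollary as a direct algebraic consequence of the preceding lemma. By the definition of $d_k$ and the lemma, for a monomial curve of multiplicity at least three one has
\[
d_k = |(k+1)L| + (2k+1) - \delta.
\]
So the first step is simply to substitute this expression into the inequality $d_k > 2k\delta$.

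Next I would rearrange: $d_k > 2k\delta$ becomes $|(k+1)L| > 2k\delta + \delta - (2k+1)$, and the right-hand side factors as $(2k+1)(\delta - 1)$. This yields the equivalence stated in the corollary, and the argument is reversible since it is purely a manipulation of inequalities between integers.

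There is really no main obstacle here; the content is concentrated in the lemma, which already identifies $d_k+\delta$ combinatorially with $|(k+1)L|+(2k+1)$. The only thing to be careful about is the hypothesis on the multiplicity being at least three, which is inherited from the lemma and is needed so that the inductive argument reducing sumset membership to the case $k=1$ of \cite[4.2]{Bu} applies; no extra work is required for the corollary itself.
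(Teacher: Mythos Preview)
Your proof is correct and matches the paper's approach: the corollary is stated without a separate proof precisely because it follows from the lemma by the substitution and rearrangement you describe. There is nothing to add.
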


This is the form in which the Buchweitz criterion is mostly cited, but  
the original form $d_k>2k\delta$ is wider applicable.

\subsection{Weierstrass semigroups}
For  a smooth projective pointed curve $(C,P)$ of genus $g>1$ defined
over $\k$ the set of nonnegative integers $n$, such that there is a rational
function on $C$ whose pole divisor is $nP$, form a semigroup $\S$, the Weierstrass semigroup
at $P$. By Riemann-Roch
the set $\L$ of positive integers that are not in $\S$ (the set of gaps) has size exactly $g$.
The point $P\in C$ is a  Weierstrass point if its 
semigroup is different from the ordinary one $\{0, g+1, g+2, \dots\}$.

Let $\mathcal M_{g,1}^{\S}$ be the moduli space of smooth  pointed curves of genus $g$
whose Weierstrass semigroup at the marked point is $\S$. 
Pinkham \cite[Section 13]{Pi} observed that it is 
related to the negative part $\mathcal C^-\to B^- $
of the versal deformation of  the monomial curve $C_\S$. The $\mathbb G_m$-action on
the curve induces a  $\mathbb G_m$-action  on the base space $B$ and on $B^-$.

\begin{thm}[Pinkham]
Let $B^-$ be the base space in negative degrees of the monomial curve $C_\S$ and 
denote by $B^-_\text{sm}$ the open subset of $B^-$ given by the points
with smooth fibres. Then  $\mathcal M_{g,1}^{\S}$ is isomorphic to 
$B^-_\text{sm}/\mathbb G_m$.
\end{thm}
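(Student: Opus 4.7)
The plan is to construct a morphism $\Phi\colon B^-_s/\G_m \to \mathcal{M}_{g,1}^{\S}$ and its inverse $\Psi$. The guiding idea is that a negative-weight deformation of the affine monomial curve $C_\S$ automatically globalises to a deformation of its projective closure $\overline{C}_\S\subset \P(1,a_1,\dots,a_n)$ (in a weighted projective space obtained by adjoining a weight-$1$ homogenising coordinate $y$), which is trivial at the smooth point at infinity $P_\infty\in \overline{C}_\S$.

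First I would construct a family of pointed curves over $B^-_s$. Since the deformation parameters in $B^-$ all have strictly positive weight, the equations $F_j$ lifting the defining binomials $f_j$ are themselves polynomials, so one obtains an algebraic flat family $\overline{\mathcal C}^- \to B^-$ of projective curves in $\P(1,a_1,\dots,a_n)$, equipped with a section $P_\infty$ and whose restriction to the open set $y\neq 0$ recovers the negative-weight part of the versal deformation of $C_\S$. Over $B^-_s$ the generic fibres $(\overline{C}_b,P_\infty)$ are smooth pointed curves of geometric genus $g$. The coordinate $x_i$ restricts to a regular function on $\overline C_b\setminus\{P_\infty\}$ with pole order at most $a_i$ at $P_\infty$; by equality of genera and the fact that the $a_i$ generate $\S$, the Weierstrass semigroup at $P_\infty$ equals $\S$. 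The $\G_m$-action on $B^-$ lifts to $\overline{\mathcal C}^-$ via $x_i\mapsto \lambda^{a_i}x_i$ and acts by isomorphisms on fibres, so the family descends to $B^-_s/\G_m$, giving $\Phi$.

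Next I would construct the inverse $\Psi\colon \mathcal{M}_{g,1}^{\S}\to B^-_s/\G_m$. Given $(C,P)\in \mathcal{M}_{g,1}^{\S}$, pick $f_i\in H^0(C,\mO_C(a_iP))\setminus H^0(C,\mO_C((a_i-1)P))$. Then $(f_1,\dots,f_n)\colon C\setminus\{P\}\hookrightarrow \A^n$ is a closed embedding whose weighted projective closure is $C$ with $P_\infty=P$. Rescaling via $f_i\mapsto t^{a_i}f_i$ for $t\in \G_m$ and passing to the limit $t\to 0$ produces a flat $\G_m$-equivariant family over $\A^1$ with special fibre $\overline{C}_\S$. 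By versality this family is induced by a $\G_m$-equivariant morphism $\A^1\to B$, whose image lies in $B^-$ by equivariance and in fact meets $B^-_s$ by smoothness of the generic fibre. A different choice of the $f_i$ differs by addition of functions of smaller pole order together with a nonzero scalar factor: the former yields a coordinate change on $\A^n$, hence a trivial deformation in the image of $\Theta_n\otimes\mO\to \Hom_{\mO_n}(I,\mO)$, while the latter is exactly the $\G_m$-action. Thus $\Psi$ is well-defined on $B^-_s/\G_m$.

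The compositions $\Phi\circ\Psi$ and $\Psi\circ\Phi$ are the identity essentially by construction, since both maps are determined by the same affine embedding data. The hard part is twofold: \emph{(i)} the algebraisation of the formal versal deformation in negative weights, which is precisely where the polynomial nature of the $F_j$ under positive-weight parameters is essential — without it, one would only obtain a formal family over $\Spf$ of a complete local ring rather than an honest scheme $B^-$; and \emph{(ii)} the verification that the ambiguity in the choice of the $f_i$ matches exactly the $\G_m$-orbit together with trivial deformations, so that $\Phi$ and $\Psi$ are morphisms of schemes and not merely bijections on geometric points. Once these are in place, a family-theoretic version of both constructions yields the isomorphism $B^-_s/\G_m \cong \mathcal{M}_{g,1}^{\S}$.
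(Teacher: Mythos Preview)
The paper does not give its own proof of this theorem; it is stated with attribution to Pinkham and a reference to \cite{Pi}, and the text immediately moves on to use it. So there is no proof in the paper to compare your proposal against.

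That said, your sketch is essentially Pinkham's original argument and is correct in outline. The two points you flag as ``hard'' are indeed the substantive ones: the algebraisation of $B^-$ (which, as you note, follows because positive-weight deformation parameters force the perturbed equations to be polynomials, so the family is algebraic rather than merely formal), and the matching of the ambiguity in the choice of the $f_i$ with trivial deformations plus the $\G_m$-action. One small point worth making explicit in your argument for $\Phi$: to conclude that the Weierstrass semigroup at $P_\infty$ equals $\S$ (and not a larger semigroup), you use that the arithmetic genus of the fibre is $g=|\L|$ and that smoothness forces the geometric genus to equal the arithmetic genus, so the containment $\S\subset \S(P_\infty)$ together with $|\N\setminus\S|=g=|\N\setminus\S(P_\infty)|$ gives equality. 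You state this but it is the step that actually uses smoothness of the fibre, so it deserves emphasis.
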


This means that $C_\S$ is negatively smoothable if and only if $\S$ is a Weierstrass semigroup.
Buchweitz'  condition  $|(k+1)\L|>(2k+1)(\delta-1)$  implies that $C_\S$ is not smoothable 
and therefore not negatively smoothable, so $\S$ is not a Weierstrass semigroup.
This can also be seen directly by Riemann-Roch. Suppose that $P$ is a Weierstrass point
on a smooth curve $C$. If $n$ is a gap, then there exists a regular differential
$\alpha$ with a zero of order $n-1$.  If $\alpha_1,\dots,\alpha_k$ are differentials
with zeros of order $n_i-1$, $i=1,\dots,k$, then their product defines a
$k$-fold differential with a zero of order $\sum n_i-k$. By Riemann-Roch 
$h^0(C,\Omega^k)=(2k+1)(g-1)$, so the number of zero orders can at most  be $(2k+1)(g-1)$.
Therefore $|(k+1)\L|\leq(2k+1)(g-1)$ has to hold.
In the literature this argument is most often given to prove Buchweitz' criterion,
with the notable exception of \cite{KW}.

\subsection{}
The known non-smoothable monomial curves have multiplicity at least 13.
Komeda has found examples of non-Weierstrass semigroups with multiplicity 8 and 12
\cite{Ko2}. These monomial curves are therefore not  negatively smoothable, but  they
might be smoothable. We show that this is indeed so in the simplest case.
This gives an  example of a smoothable irreducible
quasi-homogeneous curve singularity which is not   negatively smoothable. Earlier Pinkham gave an example of a reducible curve \cite[p. 70]{Pi2}.

\begin{ex}\label{ex:negsmooth}
The semigroup $\S=\langle8,12,18,22,51,55\rangle$ is not a Weierstrass semigroup
\cite[Example 5.1]{Ko2}.
The ideal of  the monomial curve
$C_\S$  is generated by 13 polynomials, and the deformation in negative weight unobstructed, with dimension $57$. Fortunately it
suffices to write down  the deformation for generators of $T^1$ as $\mO$-module, the other
deformations can be obtained by substitution.
We write the deformation in
rolling factors format, with 9 equations given (non-minimally) by the following determinantal
and two pairs of remaining equations:
\[
\begin{bmatrix}
x_{8}   &   x_{12}  & x_{18}  & x_{22}  & x_{51} & x_{55} \\
x_{12}  &  x_8(x_8+s_8) & x_{22} &   x_{18}(x_8+s_8)  & x_{55}& x_{51}(x_8+s_8) 
\end{bmatrix}
\]
\begin{gather*}
x_{12}^3-x_{18}^2+  x_{22} s_{14}+x_{18} s_{18}+x_{12} s_{24}+x_8 s_{28},  \\                                                                  
\begin{split}
x_{12}^2 x_8 (x_8+s_8)-x_{22} x_{18}+x_{18} (x_8+s_8) s_{14} \qquad\\+  x_{22} s_{18}
+x_8 (x_8+s_8) s_{24} +x_{12} s_{28},\end{split} \\
x_{22}^3 x_{18}^2-x_{51}^2 +   x_{55} s_{47}+x_{51} s_{51}+x_{22} s_{80}+x_{18} s_{84}+x_{12} s_{90}+x_8 s_{94}, \\
\begin{split}
 x_{22}^4 x_{18}-x_{55} x_{51}+ x_{51} (x_8+s_8) s_{47}+x_{55} s_{51}+x_{18} (x_8+s_{8}) s_{80}\qquad\\+x_{22} s_{84}+x_8 (x_8+s_{8}) s_{90}+x_{12} s_{94}.\end{split}
\end{gather*}
From these equations one sees immediately that the curve is not negatively smoothable: at the origin the Jacobian matrix has rank at most  4, as only in four equations the $x$-variables occur linearly.
But the singularity is smoothable, because the general fibre of this deformation
has only a hypersurface singularity at the origin. To see this, 
take the 1-parameter deformation  $s_8=s^4$, $s_{18}=s^9$, $s_{94}=s^{47}$,  while the
other deformation variables are zero.
A computation shows that for $s\neq0$  there is indeed only one singularity, at the origin; the
Jacobian matrix has rank 4.
The first two additional equations become $x_{18}(s^9-x_{18})=-x_{12}^3$,                                                                   
$x_{22}(s^9-x_{18})=x_{12}^2 x_8 (x_8+s^4)$ with $(s^9-x_{18})$ a unit in the local ring.
This allows to eliminate $x_{18}$ and $x_{22}$. The last two equations  then allow
elimination of $x_8$ and $x_{12}$. What remains is $x_{55}^2=ux_{51}^2$ with  $u$ a unit,
so the curve has an ordinary double point.
\end{ex}

\section{Large families}
\subsection{}
The second general method for showing non-smoothability is based on exhibiting large families of 
singularities, too large to be in the closure of the locus of smooth ones. Iarrobino \cite{Ia}
used it to study  zero-dimensional schemes.
The first examples of non-smoothable curve singularities were given by Mumford 
\cite{Mu},  using this method. He constructed a family of singular complete curves 
which is too large to lie in the closure of the moduli space of smooth curves of the genus in question.
Greuel \cite{Gr} used his version of Deligne's formula for the dimension of smoothing components
for quasi-homogeneous singularities to analyse Mumford's examples, and those of Pinkham \cite{Pi}.
In fact, he gave the following general criterion.

\begin{prop}\label{prop:greuel}
Let $\pi\colon \mathcal C \to T$ be a deformation with 
singular section $\sigma\colon T\to \mathcal C$ of the curve singularity
$C=\mathcal C_0$. If $\mathcal C_t$ is not isomorphic to $C$ for $t\neq0$
and $T$ is irreducible of dimension $\dim T \geq e(C)$, then there is a dense
open subset $T'\subset T$ such that $(\mathcal C_t,\sigma(t))$ is not 
smoothable for $t\in T'$.
\end{prop}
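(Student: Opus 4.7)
The plan is to compare the family $\pi$ with the semi-universal deformation $\mathcal{C}_B \to B$ of the germ $(C,0)$ and exploit Deligne's formula: every smoothing component of $B$ has dimension exactly $e(C)$. By semi-universality, after passing to formal (or analytic) germs at $0$, the family $\mathcal{C}/T$ is pulled back from $\mathcal{C}_B$ along some morphism $\varphi\colon (T,0)\to (B,0)$. Although $\varphi$ is only determined up to formal automorphism, its image $Z=\varphi(T)$ is intrinsic.

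The first step is to show $\dim Z \geq \dim T$. The hypothesis $\mathcal{C}_t \not\cong C$ for $t\neq 0$ gives $\varphi^{-1}(0)=\{0\}$: indeed, $\varphi(t)=0$ forces $\mathcal{C}_t\cong \mathcal{C}_{B,0}=C$. By upper semicontinuity of fibre dimension, $\varphi$ is then quasi-finite near $0$, so $\dim Z = \dim T$. Since $T$ is irreducible, $Z$ is irreducible, and I let $B'$ be any irreducible component of $B$ containing $Z$; then $\dim B' \geq \dim T \geq e(C)$.

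The second step is to rule out $B'$ being a smoothing component. If it were, Deligne gives $\dim B'=e(C)\leq \dim Z$, hence $Z=B'$ as irreducible closed subsets of equal dimension. But then the generic fibre over $B'$ agrees with the generic fibre of the family over $T$ (via $\varphi$), which is singular at $\sigma(t)$ by hypothesis on the section, contradicting that the generic fibre over a smoothing component is smooth.

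Finally, I would invoke openness of versality: for a general $b\in B'$, no other component of $B$ passes through $b$, so the germ $(B,b)$ is an irreducible semi-universal deformation base of $(\mathcal{C}_{B,b},b)$; since its unique component, namely $(B',b)$, is not a smoothing component, the singularity $(\mathcal{C}_{B,b},b)$ is not smoothable. Taking $T'=\varphi^{-1}(U)$ for $U$ the open locus of such $b$ in $B'$ yields the required dense open, since $(\mathcal{C}_t,\sigma(t))\cong (\mathcal{C}_{B,\varphi(t)},0)$ for $t\in T'$. The main obstacle is the rigorous justification of the dimension count in Step 1: because $\varphi$ is only defined after passing to formal or analytic completions and only up to automorphism of $(B,0)$, one must argue the dimension of the image is well-defined and that the reduction $\varphi(t)=0\Rightarrow \mathcal{C}_t\cong C$ really captures the isomorphism hypothesis. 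This is cleanest in the analytic setting but needs extra care when working only formally.
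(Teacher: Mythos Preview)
Your proposal is correct and follows essentially the same approach as the paper: the paper's justification is just the two-sentence remark that the image of $T$ in the versal deformation has dimension at least $e(C)$ and cannot be a smoothing component since there are no smooth fibres over $T$. You have expanded this sketch considerably, making explicit the quasi-finiteness of $\varphi$, the use of Deligne's formula to force $Z=B'$, and the appeal to openness of versality; the technical caveat you flag about working formally is real but not addressed in the paper either.
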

Indeed, the image of $T$ in the versal deformation has dimension $\dim T \geq e(C)$
and the image cannot be a smoothing component, as there are no smooth fibres
over $T$.

\subsection{Lines through the origin in general position}
 We first analyse in detail the examples of Pinkham and Greuel \cite{Pi,Gr}.  
 Consider the singularity $L_r^n$ of $r$ lines in $\A^n$ through the origin in general position
 (see Section \ref{sssc:lines}); it is the 
 cone over $r$ points in $\P^{n-1}$. 
The number of moduli of $r$ points in $\P^{n-1}$
is  $(r-n-1)(n-1)$.  
%

 For fixed $n$ the curves $L_n^n$, $L_{n+1}^n$ and $L_{n+2}^n$ are always
 smoothable. 
 We now consider $r\geq n+3$ and 
 determine the Deligne number for generic curves.
 
Let $d=d(n,r)$.
By Lemma \ref{lem:delta}
\[
\delta(L_r^n) = dr-\binom{n+d-1}{d-1}
\]
For generic curves the type $t$ is by Proposition \ref{prop:type} given by
\[\textstyle
t= \max \{ r-\binom{n+d-2}{d-1}, \binom{n+d-3}{d-1}-(n-2)\left( r-\binom{n+d-2}{d-1}\right)\}\]
 Therefore the Deligne number $e=\mu+t-1=2\delta-r+t$
 is
 \[\textstyle
e= \max \begin{cases}
 2dr-2\binom{n+d-1}{d-1}-\binom{n+d-2}{d-1}\\ 
 (2d+1-n)r-2\binom{n+d-1}{d-1}+\binom{n+d-3}{d-1}+(n-2)\binom{n+d-2}{d-1}
\end{cases}\]

 We first  consider the condition $(r-n-1)(n-1) \geq e$ for $r$ 
such  that $ t = r-\binom{n+d-2}{d-1}$, which holds for most $r$. Then also $e$ is given by the
first alternative. The condition that the number of moduli is at least $e$
translates into
$r(n-1)-(n^2-1)\geq  2dr-2\binom{n+d-1}{d-1}-\binom{n+d-2}{d-1}$, which we rewrite as
\begin{equation} \label{formula-rnd}
(n-2d-1)r\geq (n^2-1)-2\binom{n+d-1}{d-1}-\binom{n+d-2}{d-1}
\end{equation}
We determine a lower bound for $r$, in the interval $n<r\leq \binom {n+1}2$, so $d=2$.
We find $(n-5)r\geq n^2-3n-3=(n+2)(n-5)+7$. 

For an upper bound we distinguish between even and odd $n$. 
For odd  $n=2m+1$  the condition \eqref{formula-rnd} is obviously satisfied if $d=\frac{n-1}2=m$ and $m\geq 3$.
We claim that it is no longer satisfied for  $r=\binom{n+m}{m+1}=\binom{3m+1}{m+1}$.
We determine where on the interval $\binom{n+m-1}{m} <r \leq \binom{n+m}{m+1}$
formula  \eqref{formula-rnd} ceases to hold:
\[
-2r\geq 4m(m+1) -2\binom{3m+1}{m}-\binom{3m}{m}
\]
gives 
\begin{equation}\label{eq:odd}
r\leq \binom{3m+1}{m}+ \frac12\binom{3m}{m}-2m(m+1)\eqdef M(2m+1)
\end{equation}
where the right hand side is indeed less than $\binom{3m+1}{m+1}$.
If we use instead the second formula for $e$, which holds for $r-\binom{3m}{m}$ small,
then we obtain a condition of the form
$(2n-2d-2)r=(2m-2)r \geq F(m)$ with $F(m)$ an explicit expression independent of $r$, which
gives the correct condition for $r=\binom{3m}{m}$, and we have already noticed that it is satisfied
in that case, if $m\geq3$. The maximal   $r$  lies therefore in the range where we need the
first expression.

For even $n$ this method gives no non-smoothability result for $n=4$. 
Let $n=2m$.  For $d=m$ we have that the left hand side of formula \eqref{formula-rnd}
is negative, but for $n\geq 8$ the formula holds on the whole interval.
For $n\geq 8$  we take $d=m+1$
and find in a similar way as above
\begin{equation}\label{eq:even}
r\leq \frac23\binom{3m}{m}+ \frac13\binom{3m-1}{m}-\frac{4m^2-1}3 \eqdef M(2m)
\end{equation}
For $n=6$ we need $d=m$ and we get $r\leq 2 \cdot\binom{8}{2}+\binom{7}{2}-35 = 42$. 

From Proposition \ref{prop:greuel} we get
\begin{prop}\label{prop:interval}
Let $M(n)$ for $n\geq 7$ be given by the expression
in  \eqref{eq:odd} for odd $n$ and in  \eqref{eq:even} for even $n$ and set
$M(6)=42$. For $n\geq 6$ the generic $L_n^r$ in uniform position is not smoothable
if $n+2+\frac6{n-5}<r\leq M(n)$.  
\end{prop}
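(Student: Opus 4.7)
The strategy is to apply Proposition \ref{prop:greuel}. I take as parameter space $T$ the moduli space of $r$-tuples of points in uniform position in $\P^{n-1}$ modulo $\operatorname{PGL}(n)$, which is irreducible of dimension $(r-n-1)(n-1)$; the associated family of cones $L_r^n$ has the origin as singular section, and generic projectively inequivalent $r$-tuples yield non-isomorphic cones, so $C_t\not\cong C_0$ for $t\neq 0$. It therefore suffices to verify the inequality $(r-n-1)(n-1)\geq e(L_r^n)$ for $r$ in the range claimed, since then Proposition \ref{prop:greuel} immediately gives that the generic $L_r^n$ in the family is not smoothable.

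Using $\delta=dr-\binom{n+d-1}{d-1}$ from Lemma \ref{lem:delta} and $e=2\delta-r+t$ with $t$ from Proposition \ref{prop:type}, the inequality becomes, in the regime where the first branch $t=r-\binom{n+d-2}{d-1}$ of the max dominates, precisely \eqref{formula-rnd}:
\[
(n-2d-1)r\geq (n^2-1)-2\binom{n+d-1}{d-1}-\binom{n+d-2}{d-1}.
\]
For the lower bound on $r$ I take $d=2$, so the range is $n<r\leq\binom{n+1}{2}$ and the inequality simplifies to $(n-5)r\geq n^2-3n-3=(n+2)(n-5)+7$, giving $r\geq n+2+\tfrac{7}{n-5}$, which for integer $r$ is equivalent to the stated strict bound $r>n+2+\tfrac{6}{n-5}$.

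For the upper bound I split on the parity of $n$. When $n=2m+1$ is odd with $m\geq 3$, the choice $d=m$ makes \eqref{formula-rnd} hold throughout $\binom{3m-1}{m-1}<r\leq\binom{3m}{m}$; on the next interval $\binom{3m}{m}<r\leq\binom{3m+1}{m+1}$, solving the same inequality (still with $d=m$) yields the cutoff $M(2m+1)$ of \eqref{eq:odd}. For even $n=2m\geq 8$ the analogous computation with $d=m+1$ produces \eqref{eq:even}. The isolated case $n=6$ forces $d=3$ and gives $M(6)=42$ by direct substitution into \eqref{formula-rnd}.

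The main technical obstacle is bookkeeping around the branch structure of the type. Near the left endpoint of each interval $\binom{n+d-2}{d-1}<r\leq\binom{n+d-1}{d}$ the second branch of the max in Proposition \ref{prop:type}, namely $t=\binom{n+d-3}{d-1}-(n-2)\bigl(r-\binom{n+d-2}{d-1}\bigr)$, can dominate, and there the comparison $(r-n-1)(n-1)\geq e$ reduces to a different linear inequality, of the form $(2n-2d-2)r\geq F(n,d)$. One must verify that this alternative inequality is automatic in the relevant range (so no $r$-values are lost in the transition), and also check that shifting $d$ by one does not yield a stronger bound, so that the cutoffs $M(n)$ given above are genuine. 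These checks are routine but need care; once in place, the proposition follows.
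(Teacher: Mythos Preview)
Your approach is essentially that of the paper, with one genuine slip: in the odd case $n=2m+1$, on the interval $\binom{3m}{m}<r\leq\binom{3m+1}{m+1}$ the value of $d$ determined by $r$ is $m+1$, not $m$ (remember $d$ is not a free parameter but fixed by the condition $\binom{n+d-2}{d-1}<r\leq\binom{n+d-1}{d}$). With $n=2m+1$ and $d=m+1$ the coefficient $n-2d-1$ equals $-2$, and solving \eqref{formula-rnd} then gives precisely the bound \eqref{eq:odd}; with $d=m$ that coefficient vanishes and no cutoff on $r$ would result at all. Once this is corrected, your outline---including the treatment of the second branch of the type formula via an inequality of shape $(2n-2d-2)r\geq F$---coincides with the paper's argument.
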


For $d=2$ Theorem \ref{thm:smooth-gale} gives an exact criterion for smoothability:
the generic $L_r^n$ is smoothable if through the  Gale transform of the $r$ corresponding 
points passes a canonical curve of genus $g=r-n$.  We formulate  the condition  $r> n+2+\frac6{n-5}$  
in terms of $g$ as  $r>g+5+\frac 6{g-2}$. According to  \cite{St} $r$ points determine
a canonical curve  when    $r\leq g+5+\frac 6{g-2}$,
except for $g=4$ and $g=6$, when $r\leq g+5$.  For low values of $n$ we have, with this correction,
non-smoothability for the following values of $r$:
\[
\begin{array}{c||c|c|c|c|c}
n&6         &   7          &  8          &  9       &  10\\  \hline
\vphantom{A^{A^A}} r&\{10,12\}\cup[15,42]&\{11\}\cup[13,138] & [12,419] &  [13,922]  &  [14,2636] 
\end{array}
\]
For $r$ larger than $M(n)$ the curve  $L_r^n$ has 
deformations of positive weight, so the homogeneous curves are not the most general
on the equisingularity stratum. 
We compute the dimension of the tangent space.

\begin{prop}\label{prop:tplus}
For $\ell >0$
\[\dim T^1_{\ell} \geq 
\max \left\{0, (n-1)\left(r-\binom{n+\ell}{\ell+1}\right)-\binom{n+\ell-1}{\ell+1}\right\}\]
\end{prop}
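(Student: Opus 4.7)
The plan is to apply the exact sequence \eqref{eq:exact} directly: since the terms are graded with degree-$0$ maps, it suffices to compute $\dim(\ker\partial f)_\ell$ and bound $\dim\Hom_\mO(\m,\mO)_\ell$ from below. The crucial observation will be that for every $\ell>0$ the map $\partial f\colon (K/\mO)^n_\ell \to (K/\mO)^k_\ell$ has \emph{zero} target, so its kernel equals the whole domain $(K/\mO)^n_\ell$, of dimension $n(r-H(\ell+1))$, where $H$ denotes the Hilbert function of $\mO$.

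To justify the vanishing I would argue as follows. The hypothesis $r > \binom{n+d-2}{d-1} = H(d-1)$ means that the homogeneous coordinate ring of $\Gamma$ coincides with $\k[x_1,\dots,x_n]$ in degrees $\leq d-1$, so the ideal $I$ of $\Gamma$ contains no form of degree less than $d$. Thus every minimal generator $f_j$ has degree $q_j\geq d$. Since $H(m)=r$ for $m\geq d$, we have $(K/\mO)_m=0$ in that range, and for $\ell\geq 1$ we get $\ell+q_j>d$, so each summand $(K/\mO)_{\ell+q_j}$ of the target of $\partial f$ vanishes. Combined with the standard inclusion $\mO \subset \Hom_\mO(\m,\mO)$ (multiplication of a ring element sends $\m$ into $\mO$), giving $\dim\Hom_\mO(\m,\mO)_\ell \geq H(\ell)$, the exact sequence \eqref{eq:exact} yields
\[
\dim T^1_\ell \;\geq\; n\bigl(r-H(\ell+1)\bigr)-r+H(\ell).
\]

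Finally I would convert this to the stated form. In the nontrivial range $1\leq \ell\leq d-2$, both $H(\ell)$ and $H(\ell+1)$ equal the dimensions of the full spaces of forms, and Pascal's identity $\binom{n+\ell}{\ell+1}=\binom{n+\ell-1}{\ell+1}+\binom{n+\ell-1}{\ell}$ translates to $H(\ell+1)-H(\ell)=\binom{n+\ell-1}{\ell+1}$; a one-line rearrangement then gives the right-hand side $(n-1)(r-H(\ell+1))-\binom{n+\ell-1}{\ell+1}$. In the complementary range $\ell\geq d-1$, Corollary~\ref{cor:Tplusnul} already gives $T^1_\ell=0$, which is consistent with the stated bound since there $r-H(\ell+1)=0$ and the right-hand side is nonpositive, so the $\max\{0,\cdot\}$ takes the value $0$. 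The only genuine conceptual step is the vanishing of the target of $\partial f$ in positive degree; everything else is bookkeeping with Hilbert functions and a Pascal identity.
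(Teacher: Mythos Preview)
Your proof is correct and follows essentially the same route as the paper's: both recognise that for $\ell>0$ the target $(K/\mO)^k_\ell$ of $\partial f$ vanishes (since every generator of $I$ has degree $\geq d$ and $K_\nu=\mO_\nu$ for $\nu\geq d$), reducing the exact sequence \eqref{eq:exact} to the computation of a cokernel. The only cosmetic difference is that you bound $\dim\Hom_\mO(\m,\mO)_\ell$ from below via the inclusion $\mO_\ell\subset\Hom_\mO(\m,\mO)_\ell$, whereas the paper passes to $K_\ell/\mO_\ell$ and bounds the image of the resulting map (induced by the Euler field) by $\dim K_\ell/\mO_\ell=r-H(\ell)$; the two give the same numerical inequality. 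Your explicit treatment of the range $\ell\geq d-1$ via Corollary~\ref{cor:Tplusnul} is slightly more careful than the paper, which leaves that case implicit (the stated bound being nonpositive there).
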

\begin{proof} Let $\binom{n+d-2}{d-1} <r \leq \binom{n+d-1}{d}$. Then there are
$k_1= \binom{n+d-1}{d}-r $ equations of degree $d$ and possibly $k_2$ equations
of degree $d+1$. Furthermore $K_\nu=\mO_\nu$ for $\nu\geq d$. Therefore $(K/\mO)^k_{\ell}=
(K_{d+\ell}/\mO_{d+\ell})^{k_1}\oplus (K_{d+1+\ell}/\mO_{d+1+\ell})^{k_2}=0$
for $\ell >0$. The exact sequence \eqref{eq:exact} reduces to 
\[
 0 \to \Hom_\mO(\m,\mO)_{\ell} \to K_{\ell} \to \big(K_{\ell+1}/\mO_{\ell+1}\big)^n \to T^1_{\ell} \to 0
 \]
 Here $\Hom_\mO(\m,\mO)_{\ell} =\{ a\in K_{\ell}\mid a \m_1\subset \mO_{\ell+1}\}$. The map
 $K_{\ell}\to \big(K_{\ell+1}/\mO_{\ell+1}\big)^n$ is induced by the Euler vector field, so
it is given by $a\mapsto ([ax_1],\dots,[ax_n])$, where $[ax_i]$ denotes the class of $ax_i$
in $K_{\ell+1}/\mO_{\ell+1}$ and the $x_i$ are the generators of $\m$. This induces
a map $K_{\ell}/\mO_{\ell} \to (K_{\ell+1}/\mO_{\ell+1}\big)^n$ whose kernel consists
of the elements of $\Hom_\mO(\m,\mO)_{\ell}$ not lying in $\m_{\ell}=\mO_{\ell}$.
Therefore $ T^1_{\ell} = \Coker K_{\ell}/\mO_{\ell} \to  (K_{\ell+1}/\mO_{\ell+1}\big)^n$.
The dimension of the image is at most $r-\binom{n+\ell-1}{\ell}$, so
$\dim T^1_{\ell} \geq  n(r-\binom{n+\ell}{\ell+1})-r+\binom{n+\ell-1}{\ell}
=  (n-1)\left(r-\binom{n+\ell}{\ell+1}\right)-\binom{n+\ell-1}{\ell+1}$.
\end{proof}

\begin{prop}\label{prop:notsm}
The general curve on the equisingularity stratum of $L_r^n$  is not smoothable
if $r> n+2+\frac6{n-5}$ for $n\geq6$, if $r>18$ for $n=5$ and $r>30$ for $n=4$.
\end{prop}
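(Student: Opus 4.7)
The strategy is to apply Greuel's criterion (Proposition~\ref{prop:greuel}) to the equisingular stratum $E$ of $L_r^n$ in its versal deformation base, so that the task reduces to estimating $\dim E \geq e$. My first step would be to establish
\[
\dim E \;\geq\; (r-n-1)(n-1) \;+\; \sum_{\ell\geq 1}\dim T^1(L_r^n)_{\ell},
\]
where the first summand is the number of moduli of $r$ points in $\P^{n-1}$, realised as degree-zero deformations varying the point configuration, and the second summand comes from strictly positive weight deformations. The point is that each positive-weight infinitesimal deformation of $L_r^n$ preserves both the number of smooth branches and the $\delta$-invariant, hence lies in the equisingular stratum.

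For $n\geq 6$ with $n+2+\frac{6}{n-5}<r\leq M(n)$, the moduli term alone already meets $e$, by the computation recorded in Proposition~\ref{prop:interval}. For $r>M(n)$ I would add the lower bound supplied by Proposition~\ref{prop:tplus} at $\ell=1$, namely $(n-1)\bigl(r-\binom{n+1}{2}\bigr)-\binom{n}{2}$, and verify that combined with the moduli count $(r-n-1)(n-1)$ it exceeds $e=2\delta-r+t$. For larger $r$, where $d\geq 3$ and the $\ell=1$ contribution is not sufficient, one must include further summands $\dim T^1_\ell$ for $\ell=2,\dots,d-1$, each again bounded below by Proposition~\ref{prop:tplus}. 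The small embedding dimension thresholds $r>30$ for $n=4$ and $r>18$ for $n=5$, which are not covered by Proposition~\ref{prop:interval}, would be settled by a direct numerical version of the same calculation.

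Once $\dim E\geq e$ is in hand, the fact that the general fibre over $E$ is not isomorphic to $L_r^n$ (generic positive-weight deformations are genuinely non-conical, and the cones themselves already move in an $(r-n-1)(n-1)$-dimensional family of mutually non-isomorphic singularities) allows Proposition~\ref{prop:greuel} to conclude non-smoothability of the general curve on $E$. The main technical obstacle is the uniform verification of the dimension inequality across all the ranges of $r$: one must handle both alternatives of the type formula in Proposition~\ref{prop:type}, combine the piecewise lower bounds of Proposition~\ref{prop:tplus} for several values of $\ell$ simultaneously, and keep track of which constraints are binding as $r$ crosses the thresholds $\binom{n+d-1}{d}$ and $d$ increases.
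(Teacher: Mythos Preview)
Your strategy coincides with the paper's: identify the equisingular stratum with $T^1_{\geq 0}$, bound it below using Proposition~\ref{prop:tplus}, compare with $e$, and invoke Proposition~\ref{prop:greuel}. One point to tighten: to pass from infinitesimal deformations to the dimension of the stratum you need that the positive-weight deformations are \emph{unobstructed}, not merely equisingular; the paper secures this by noting they all arise from deformation of the parametrisation, giving in fact $\dim E = \dim T^1_{\geq 0}$.

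Where the paper goes further is exactly at what you flag as the main technical obstacle. Instead of verifying $\dim T^1_{\geq 0}\geq e$ interval by interval, it compares the \emph{increments} of the two sides as $r$ grows by one. On the interval $\binom{n+d-2}{d-1}<r\leq\binom{n+d-1}{d}$ the lower bound for $\dim T^1_{\geq 0}$ increases by $(n-1)(d-2)$ or $(n-1)(d-1)$, while $e$ increases by $2d+1-n$ or $2d$; moreover the \emph{same} inequality on $r$ governs which alternative occurs on each side. Both comparisons reduce to $(n-3)d\geq n-1$, so once $\dim T^1_{\geq 0}\geq e$ holds at some $r$ it persists for all larger $r$. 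This collapses the whole verification to a single base case: Proposition~\ref{prop:interval} for $n\geq 6$, and for $n=4,5$ one explicit computation (the paper does $n=4$ at $d=4$, finding $\dim T^1_0+\dim T^1_1+\dim T^1_2\geq 9r-121$ meeting $e=8r-90$ at $r=31$). This growth argument is the missing ingredient that turns your outline into a proof without case-splitting over all $d$.
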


\begin{proof}
The deformations of positive weight are obtained 
by deformation of the parametrisation and are not obstructed. Therefore the
equisingularity stratum has dimension $\dim T^1_{\geq0}$.  We compare the growth of the dimension
with the growth of the Deligne number $e$. For $\binom{n+d-2}{d-1} <r \leq \binom{n+d-1}{d}$
the growth of $\dim T^1_{\geq0}$, if $r$ increases by $1$, is $(n-1)(d-2)$ or $(n-1)(d-1)$ with
the second alternative holding  if $(n-1)\left(r-\binom{n+d-2}{d-1}\right)\geq \binom{n+\ell-1}{\ell+1}$.
The same condition determines the growth of the Deligne number, which is
$2d+1-n$ or $2d$.  So if $(n-3)d\geq n-1$, then $\dim T^1_{\geq0}$ stays with growing $r$
larger or equal than $e$ once it is at least equal to $e$.

For $n\geq 6$ the bound follows from Proposition \ref{prop:interval}; it can be improved by 1 for $n=8$.
For $n=4,5$ we need also $\dim T^1_+$. We give the computation for $n=4$. 
With Proposition \ref{prop:tplus} $\dim T^1_0= 3r-15$, $\dim T^1_1\geq  3r-36$ and $\dim T^1_2\geq 3r-70$.
With $d=4$ we have for  $24\leq r \leq 35$  that $e=8r-90$.
We have equality $8r-90=9r-121$ if $r=31$.
\end{proof}

From this result nothing can be said about smoothability of the homogeneous curve.
But the explicit computations for low values of $n$ show that for some values of $r$ there
are no infinitesimal deformations of negative weight, which proves that for these values the general $L_r^n$ is not smoothable.  For $n=4$ this is the case in the 
intervals $[96,105]$ and $[132,150]$. We compute $T^1_{-1}$ in general.

\begin{prop}\label{prop:tmineen}
For $\binom{n+d-2}{d-1} <r \leq \binom{n+d-1}{d}$ one has 
\[\textstyle
\dim T^1_{-1}\geq \max\left\{0, (n-1)r-n-\left(r-\binom{n+d-2}{d-1}\right)\left(\binom{n+d-1}{d}-r\right)\right\}\] For $r=\binom{n+d-1}{d}$  equality holds, that is $\dim T^1_{-1}=(n-1)\binom{n+d-1}{d}-n$.
 \end{prop}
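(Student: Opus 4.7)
The plan is to evaluate the exact sequence \eqref{eq:exact} in degree $\ell=-1$, essentially mimicking the argument in Proposition \ref{prop:tplus} but on the other side of zero. For an $L_r^n$ in general position the Hilbert function $\dim\mO_\ell=\min(r,\binom{n+\ell-1}{n-1})$ forces $\mO_\ell=K_\ell=\k^r$ for every $\ell\geq d$. This gives $\dim K_{-1}=r$ and $\dim(K/\mO)^n_{-1}=n\dim(K_0/\mO_0)=n(r-1)$. In $(K/\mO)^k_{-1}=\bigoplus_j K_{q_j-1}/\mO_{q_j-1}$ any generator $f_j$ of degree $q_j\geq d+1$ contributes a summand $K_{q_j-1}/\mO_{q_j-1}=0$, so only the $k_1:=\binom{n+d-1}{d}-r$ generators of degree $d$ matter; each yields $K_{d-1}/\mO_{d-1}$ of dimension $u:=r-\binom{n+d-2}{d-1}$, giving $\dim(K/\mO)^k_{-1}=k_1u$.

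Feeding this into
\[0\to\Hom_\mO(\m,\mO)_{-1}\to K_{-1}\to(\ker\partial f)_{-1}\to T^1_{-1}\to 0\]
and using the trivial estimates $\dim(\ker\partial f)_{-1}\geq n(r-1)-k_1u$ and $\dim\Hom_\mO(\m,\mO)_{-1}\geq 0$ yields
\[\dim T^1_{-1}\;\geq\;(n-1)r-n-k_1u,\]
which together with the non-negativity of $T^1_{-1}$ is the stated inequality.

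For the equality at $r=\binom{n+d-1}{d}$ we have $k_1=0$, so $(K/\mO)^k_{-1}=0$; consequently $\partial f$ acts as zero in degree $-1$ and $(\ker\partial f)_{-1}$ is all of $(K/\mO)^n_{-1}$, of dimension $n(r-1)$. The remaining ingredient—and the one non-formal step—is the vanishing $\Hom_\mO(\m,\mO)_{-1}=0$. Under the identification of $\Hom_\mO(\m,\mO)$ with the fractional ideal $\{a\in K\mid a\m\subset\mO\}$, such an $a$ is a vector $(a_1,\dots,a_r)\in K_{-1}=\k^r$ for which every $ax_j=(a_ip_{ji})_i$ lies in $\mO_0=\k$; equivalently $a_ip_i=\Lambda\in\k^n$ independently of $i$, where $p_i\in\k^n\setminus\{0\}$ represents the $i$-th line. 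If $\Lambda\neq 0$, then every $p_i$ with $a_i\neq 0$ is proportional to $\Lambda$, contradicting the distinctness of the lines; hence $\Lambda=0$, and $p_i\neq 0$ then forces $a_i=0$. Combined with the two equalities above this gives $\dim T^1_{-1}=n(r-1)-r=(n-1)r-n$, as claimed.
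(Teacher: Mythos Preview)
Your proof is correct and follows the same route as the paper: evaluate the exact sequence \eqref{eq:exact} in degree $-1$, bound the rank of $\partial f$ by the dimension $k_1u$ of its target, and note that $k_1=0$ at $r=\binom{n+d-1}{d}$. The only difference is that you supply an explicit argument for $\Hom_\mO(\m,\mO)_{-1}=0$ (which the paper simply asserts), while for the inequality you observe that the trivial bound $\dim\Hom_\mO(\m,\mO)_{-1}\geq0$ already suffices.
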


\begin{proof}
We use  the same notation as in the proof of Proposition \ref{prop:tplus}. 
We have $\Hom_\mO(\m,\mO)_{-1}=0$
so the exact sequence \eqref{eq:exact} becomes
\[
0\to K_{-1} \to 
\ker \left  \{
{\partial f}\colon (K_{0}/\mO_{0})^n\to (K_{d-1}/\mO_{d-1})^{k_1}\right\} \to
T^1_{-1}\to 0
\]
The rank of the map $
{\partial f}$ is at most
\[
\min \{\dim  (K_{0}/\mO_{0})^n- \dim K_{-1}, \dim  (K_{d-1}/\mO_{d-1})^{k_1}\}.
\]
Writing $r=\binom{n+d-2}{d-1}+s$ we have  $\dim  K_{d-1}/\mO_{d-1}=s$ and 
 $k_1= \binom{n+d-1}{d}-r= \binom{n+d-2}{d}-s$.
Therefore  $\dim T^1_{-1}\geq \max\{0, (n-1)r-n-s(\binom{n+d-2}{d}-s)\}$.

For $s=0$ the map $
{\partial f}$ is zero, so $\dim T^1_{-1}=(n-1)r-n$.
\end{proof}

\begin{conj}\label{conj:equality} 
For generic $L_r^n$ equality holds in Proposition \ref{prop:tplus} for  $\dim T^1_\ell$, $\ell>0$ and in 
Proposition \ref{prop:tmineen} for $\dim T^1_{-1}$, 
with some exceptions for low values of $r$, where the
existence of smoothings according to Theorem  \ref{thm:smooth-gale} forces $T^1_{-1}$ to be non-zero.
The generic $L_r^n$ is not smoothable in the range of Proposition \ref{prop:notsm}.
\end{conj}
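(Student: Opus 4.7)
For the equality statements, the plan is to argue by semicontinuity. Each of Propositions \ref{prop:tplus} and \ref{prop:tmineen} identifies $T^1_\ell$ (or $T^1_{-1}$) with the cokernel of an explicit linear map whose image dimension is bounded above by a rank computation on matrices built from the coordinates of the $r$ points. Equality in the stated bound is equivalent to this linear map attaining its maximum possible rank, which is a Zariski-open condition on the configuration of points in $\P^{n-1}$. It therefore suffices to exhibit one configuration in uniform position for which maximum rank is achieved. Two natural routes are available: specialization to the point set coming from a projection of the rational normal curve, as in Example \ref{ex:buchcurv}, where the ideal has a determinantal format that makes the rank computation explicit; or induction on $r$, adding one point at a time as in the construction preceding Lemma \ref{lem:delta}, with bookkeeping of the rank increments degree by degree.

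Granting the equalities, the non-smoothability of the generic $L_r^n$ would follow by refining the argument of Proposition \ref{prop:notsm}. The total dimension of the tangent space to the versal base equals $\sum_\ell \dim T^1_\ell$; for $r$ in the stated range, one checks that this total is strictly less than the Deligne number $e$ in a majority of sub-ranges, so that no smoothing component (which would have dimension exactly $e$) can exist. In the remaining sub-range, where $\dim T^1_{-1}$ is too large for this direct dimension count to conclude, the fallback is Theorem \ref{thm:smooth-gale}: any smoothing of $L_r^n$ would correspond to a projective non-special curve of genus $g=r-n$ having the given point set as hyperplane section, which forces the Gale transform of the $r$ points to lie on a canonical curve of genus $g$. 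For $r$ satisfying the bound $r > g + 5 + 6/(g-2)$ (with the noted exceptions for $g=4,6$), the Gale transform of a generic configuration is not contained in such a canonical curve, giving the required contradiction.

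The main obstacle will be the equality for $T^1_{-1}$, because the map $\partial f\colon (K_0/\mO_0)^n \to (K_{d-1}/\mO_{d-1})^{k_1}$ couples the linear functionals separating the points with the degree-$d$ equations of the ideal, and its rank drops on exceptional configurations such as those lying on unexpected low-degree forms; ruling out these loci uniformly across the whole conjectured range is a delicate combinatorial matter. A subtler issue is the passage from a tangent-space dimension bound to the actual non-existence of a smoothing component: as Example \ref{ex:negsmooth} demonstrates, a quasi-homogeneous singularity can be smoothable via an obstructed negative-weight direction even when naive dimension counts look unfavourable. Distinguishing these two situations for $L_r^n$ appears to require analysis of the obstruction map $T^1_{-1} \to T^2_{-1}$, or a finer geometric argument ultimately reducing to the Gale-transform criterion.
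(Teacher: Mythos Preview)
The statement you are attempting to prove is labelled as a \emph{Conjecture} in the paper, and the paper does not prove it. Immediately after stating the conjecture the paper writes only that it ``is supported by direct computations'' and reports the ranges checked: for $n=4$, $\dim T^1$ is as predicted for $r\leq 70$ and $\dim T^1_{-1}$ for $r\leq 151$; for $n=5$, $\dim T^1$ for $r\leq 50$ and $\dim T^1_{-1}$ for $r\leq 62$. That is the entirety of the paper's evidence. So there is no proof in the paper to compare your attempt against.

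Your outline is a reasonable heuristic strategy and you are candid about its gaps, but a few of them are more serious than you indicate. For the equality part, the semicontinuity reduction is correct, but neither of your proposed witnesses is convincing: points on a projected rational normal curve are a highly \emph{special} configuration and there is no reason to expect the relevant maps to attain generic rank there (indeed, such configurations are precisely where extra syzygies and rank drops tend to occur). The induction-on-$r$ idea is not fleshed out at all. For the non-smoothability part, two issues: first, comparing $\sum_\ell \dim T^1_\ell$ with $e$ requires control of $\dim T^1_\ell$ for \emph{all} $\ell<0$, not just $\ell=-1$, and the paper's propositions say nothing about $\ell\leq -2$; second, your Gale-transform fallback via Theorem~\ref{thm:smooth-gale} is only available when $d=2$ (i.e.\ $n<r\leq\binom{n+1}{2}$), whereas the conjectured range in Proposition~\ref{prop:notsm} extends to arbitrarily large $r$ and hence arbitrarily large $d$. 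In short, what you have written is a plausible plan of attack on an open problem, not a proof, and the paper makes no stronger claim than that.
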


The conjecture is supported by direct computations.
For $n=4$ the dimension of $T^1$ is as predicted for $r$ up to 70, and that of $T^1_{-1}$
for $r\leq 151$. For $n=5$ we have computed the dimension of $T^1$ for $r\leq 50$
and that of $T^1_{-1}$ for $r\leq 62$. As $\dim T^1_{-1}=0$ for $41\leq r \leq 60$,
the generic $L_r^5$ is not smoothable for those $r$.

\begin{ex} The case $n=6$.
The general $L_{10}^6$ and $L_{12}^6$ are not smoothable, and there are no deformations
of negative weight. In both cases $\dim T^1 < e$.
The general $L_{11}^6$ is smoothable, with $\dim T^1=e=24$. The base space is smooth,
but $\dim T^2=5$.
The general $L_{13}^6$ is also smoothable, but with  $\dim T^1=33=e+1$, so $\dim T^1_{-1}=3$
which is  the minimal value according to Proposition \ref{prop:tmineen}. Furthermore
$\dim T^2 = \dim T^2_{-2} =1$ and a computation shows that the base space in negative degrees is 
singular, given by one quadratic equation.
The case $L_{14}^6$ is described in \cite{St3}. 
Here $\dim T^1_{-1}=8$ and the dimension of a smoothing component is one more than the
number of moduli. The general curve
has 16 smoothing components of dimension 36.

The ideal of $L_r^6$ is generated by quadrics for $6\leq r\leq 14$, but from $r=15$ on one needs
also cubics and for $r=21$ the ideal is generated by 35  cubics. 
The general $L_r^6$ with $15\leq r\leq 21$ is not smoothable, but $ \dim T^1_{-1}$ is not zero,
increasing to 99 for $r=21$, as predicted by Conjecture \ref{conj:equality}.
For $r=15$ a computation shows that all deformations
of negative weight are obstructed.
Therefore the base space of the versal deformation is non-reduced.
We expect the same to be true for other $r\leq 21$.

On  the interval $21< r \leq 56$  the ideal is, up to $r=42$, generated by cubics,
from $r=43$ also quartics are needed, and for $r=56$ the ideal is generated by 70
quartics.
From $r=25$ there are deformations of positive degree, with $\dim T^1_1 = 5(r-24)$, 
computed up to $r=36$.
The dimension of $T^1_{-1}$ decreases, in accordance with the conjecture: 
it is $70$ for $r=22$, $43$ for $r=23$, $18$ for $r=24$
and zero for $25\leq r \leq 47$, to increase afterwards. Only for $r=48$ we have been able to compute the dimension, which indeed turns out  to be $18$. 
In the next interval $56<r\leq 126$ the conjecture predicts that there are no deformations
of negative weight for $61\leq r \leq 116$. We computed only the case  $r=84$, where the size
of the  syzygy matrix 
is minimal:  the general  $L_{84}^6$ has no
deformations of negative weight, and is therefore not smoothable.
\end{ex}

In \cite{Gr2} Problem 2.57 reads
\begin{quote}
Do there exist for fixed $n \geq 4$ non-smoothable curves $L_r^n$
if $r$ goes to infinity? It seems unlikely that this is not the case.
\end{quote}
The evidence above makes it very unlikely that this is not  the case. 
The problem was already formulated in \cite{Gr} without the second sentence, but 
with the question:
\begin{quote}
Do there exist smoothable ones?
\end{quote}
More precisely, we ask whether  there exist for fixed $n$ smoothable $L_r^n$
 with  Hilbert function
$H(\ell) = \min\left(r,\binom{n+\ell-1}{n-1}\right)$ and large $d(n,r)$.
No examples are known to us, in fact not even for $d=3$.

\subsection{}
The curve $L_r^n$ with  $r=\binom{n+1}2 +1$ is the first where
$\delta(L_{r}^n)=\delta(L_{r-1}^n)+3$, because $L_{r-1}^n$
is the first where the ideal is generated by
cubics only and therefore  the intersection multiplicity with a new  line in general position 
is equal to three. It is also possible to construct
a curve  $S_{r}^{n,n+1}$ with $\delta(S_{r}^{n,n+1})=\delta(L_{r-1}^n)+2$ 
having  smooth branches,
where each branch has as tangent line a line of the $L_{r}^n$.
To achieve this we  replace
the last line by a parabola in $\A^{n+1}$, tangent to $\A^n$. 
If the line in $L_{r}^n$ is parametrised by
$(x_1,\dots,x_n)=(a_1 t,\dots, a_n t)$, then the smooth branch of $S_{r}^{n,n+1}$
is given by $(x_1,\dots,x_n,x_{n+1})=(a_1 t,\dots, a_n t, t^2)$. 
The curve is quasi-homogeneous. Its tangent cone is not the  $L_{r}^n$, but has 
an embedded point at the origin.

The invariants of this singularity extend the pattern for $d=2$.
One computes that $e=4r-3n-2$, $\dim T^1_\ell=0$ for $\ell>0$,
$\dim T^1_0=(n-1)(r-n-1)$ and $\dim T^1_{-1}=(r-2)n$, with $r=\binom{n+1}2+1$. 
We conclude that the general such curve is not smoothable for $n\geq 6$.
For these curves $\dim T^1_{-1}$ is large, but all these deformations should be obstructed.

If we want to add one more line, we have two choices. If we want to increase $\delta$ with
$2$, we need to increase the embedding dimension. To have a singularity better suited
to direct computation (for small $n$), we keep the embedding dimension constant, and increase
$\delta$ by three each time we add (in a certain range) a 
smooth branch in $\A^{n+1}$ of the same form $(a_1 t,\dots, a_n t, t^2)$.
Also these curves  $S_r^{n,n+1}$ are not smoothable.  

The curves $S_r^{n,n+1}$ are quasi-homogeneous, with variables 
$x_i$, $i=1,\dots,n$ of weight 1 and $x_{n+1}$ of weight 2. We expect
the ideal of $S_r^n$  for $\binom{n+1}2< r \leq \frac34 \binom{n+2}3$ to be  
generated by $\binom{n+2}3+n-r$ cubic equations and one equation of degree 4
containing the monomial $x_{n+1}^2$. 
The $\binom{n+2}3+n-r$ cubic equations alone generate the ideal of 
the union of $S_r^{n,n+1}$ and the $x_{n+1}$-axis.
The number of moduli for both curves is the same.
Explicit computations show that this is indeed true for $4\leq n \leq 7$.

\begin{defn}
The curve  $AS_r^{n,n+1}\subset \A^{n+1}$ is the curve with $r+1$ branches
consisting of the union of $S_r^{n,n+1}$, the curve with $r$ smooth branches tangent to 
$\A^n\subset \A^{n+1}$, with the $x_{n+1}$-axis.
\end{defn}

Explicit computations for $n=6$ show that the curves
$AS_r^{6,7}$ and $S_r^{6,7}$ have no deformations of negative degree 
 in the range $26\leq r \leq 47$. These curves do not deform into curves of a different type.

\subsection{Irreducible curves}
We consider Mumford's construction of non-smoothable curve singularities \cite{Mu}. 
Let $(t)\subset \k[t]$ be the ideal defining the point
$0\in \A^1$. 
Choose integers $1<n<d$ and let $V$ be a $\k$-vector space
with $(t^{2d})\subset V \subset (t^d)$ and $\dim V/(t^{2d})=n$. Then $\k+V$ is the
affine coordinate ring of a curve $C_V$ having a singular point with $\delta = 2d-n-1$.
These curves  are parametrised by a Grassmannian $G(n,d)$, but this 
family is too large too apply Proposition \ref{prop:greuel}, as it in general contains isomorphic curves.
We observe
that the general $C_V$ is a deformation of positive weight of the monomial
curve $C_{d,n}$ where $V/(t^{2d})$ has basis $(t^d,t^{d+1},\dots,t^{d+n-1})$, and that every
deformation of positive weight is of the form $C_V$. 

Moreover, the deformations
of positive weight are unobstructed, as we can perturb the parametrisation
arbitrarily with terms of higher weight.

\begin{thm}
The general equisingular deformation of the curve $C_{d,n}$ is not smoothable, 
if  $(n-6)(d-n-3)\geq 14$.
\end{thm}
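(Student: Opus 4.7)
The plan is to apply Greuel's criterion (Proposition~\ref{prop:greuel}) to the family $\pi\colon \mathcal{C} \to T$ of positive-weight deformations of $C_{d,n}$. By the remarks preceding the statement every such deformation is isomorphic to some $C_V$, and these deformations are unobstructed, so $T$ is a smooth germ of dimension $\dim T^1_+(C_{d,n})$ with central fibre $C_{d,n}$; its generic fibre is equisingular but non-isomorphic to $C_{d,n}$ as soon as $\dim T > 0$. It will suffice to show $\dim T \geq e(C_{d,n})$ under the hypothesis.

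\emph{Step 1 (Deligne number).} The semigroup is $\S = \{0\}\cup[d,d+n-1]\cup[2d,\infty)$, with $\delta = 2d-n-1$, $r = 1$, and generators $a_i = d+i-1$. A gap $\ell$ contributes to the Cohen--Macaulay type iff $\ell + a \in \S$ for every generator $a$. Since the hypothesis $(n-6)(d-n-3)\geq 14$ forces $d > n$, for any $\ell \in [1,d-1]$ some intermediate $a_i + \ell$ lies in the forbidden interval $(d+n-1, 2d)$, while every gap in $[d+n, 2d-1]$ does satisfy the condition. Hence $t = d-n$, and Greuel's formula yields
\[
e(C_{d,n}) = 2\delta - r + t = 5d - 3n - 3.
\]

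\emph{Step 2 (dimension of $T^1_+$).} I would use Buchweitz's formula (Theorem~\ref{thm:monomialT1}). All binomial defining relations $x_ix_j = x_kx_\ell$ of $C_{d,n}$ have weight $\geq 2d$, and $q+\ell\in\S$ whenever $q\geq 2d$ and $\ell > 0$, so $V_\ell = 0$ for every $\ell > 0$. A short case analysis shows that $A_\ell = \{i : a_i + \ell \notin \S\}$ is an interval of integers in $\{1,\dots,n\}$ of size $\min(\ell, n, d-\ell, d-n)$, non-empty precisely for $\ell \in [1, d-1]$, and summation gives $\sum_{\ell=1}^{d-1}\#A_\ell = n(d-n)$. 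Summing Buchweitz's formula over $\ell > 0$ then yields
\[
\dim T = \dim T^1_+(C_{d,n}) = \sum_{\ell=1}^{d-1}(\#A_\ell - 1) = n(d-n) - (d-1) = (n-1)(d-n-1).
\]

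\emph{Step 3 (conclusion).} The algebraic identity
\[
(n-1)(d-n-1) - (5d - 3n - 3) = (n-6)(d-n-3) - 14
\]
shows the hypothesis is equivalent to $\dim T \geq e(C_{d,n})$; then $\dim T > 0$, so the generic fibre of $\pi$ is non-isomorphic to $C_{d,n}$, and Proposition~\ref{prop:greuel} yields non-smoothability of the generic $C_V$. The main technical point is the piecewise count of $\#A_\ell$ in Step 2 (splitting into sub-cases depending on whether $d \geq 2n$), together with the vanishing $V_\ell = 0$ for $\ell > 0$, which is immediate from the weight bound $q_i \geq 2d$ on the binomial relations.
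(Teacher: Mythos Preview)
Your proof is correct and follows essentially the same route as the paper: compute $e = 5d-3n-3$ from $t=d-n$, compute $\dim T^1_+ = (n-1)(d-n-1)$ via Buchweitz's formula (using $V_\ell=0$ for $\ell>0$ since all relation weights are $\geq 2d$), and compare. The only cosmetic difference is that the paper evaluates $\sum_\ell \#A_\ell$ by fixing $i$ and observing that each $i$ contributes exactly $d-n$ values of $\ell$, which bypasses your piecewise formula $\#A_\ell=\min(\ell,n,d-\ell,d-n)$ and the case split on $d\gtrless 2n$; and the paper writes the final inequality as $(n-6)(d-n-1)\geq 2(n-6)+14$ before simplifying, whereas you give the equivalent identity directly.
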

\begin{proof}
As noted, the equisingularity stratum is  smooth with as tangent space
$T^1_{+}=\oplus_{\ell>0} T^1_{\ell}$. We compute $T^1_{\ell},$ using Buchweitz' formula in
Theorem 
\ref{thm:monomialT1}.  The semigroup of the curve $C_{d,n}$
is  $\langle d,d+1,\dots,d+n-1\rangle$ if $2n> d$ and
$\langle d,d+1,\dots,d+n-1, 2d+2n-1, \dots, 3d-1\rangle$  if
$2n<d+1$. Generators larger than the conductor do not contribute to $A_{\ell}$,
so $A_{\ell}= \{i\in\{1,\dots,n\}\mid d+i-1+\ell\notin\S\}$.
As  the degree of the equations is at least $2d$, there are also no positive dimensional $V_{\ell}$.
Therefore  $\dim T^1_{\ell}=|A_{\ell}| -1$  and $T^1_{\ell}=0$
for $\ell\geq d$.
 The condition defining $A_{\ell}$ is
$d+n-1 < d+i-1 +\ell <2d$ so $n<i+\ell<d+1$. For fixed $i$ there are 
$d-n$ values of $\ell$ where these conditions are satisfied, and those satisfy $1\leq \ell \leq d-1$.  
Therefore $\dim T^{1}_{+} = n(d-n)-(d-1) =(n-1)(d-n-1)$.

To compute $t$ we observe that the generators of $\omega$ are
$\frac{dt}{t^{2d}},\dots,\frac{dt}{t^{d+n+1}}$, so $t=d-n$
and $e=2\delta+t-1 = 5d-3n-3$. Therefore the general equisingular deformation
is not smoothable if $(n-1)(d-n-1)\geq  5d-3n-3$, so $(n-6)(d-n-1)\geq  2n+2=2(n-6)+14$.
\end{proof}

\begin{ex}
The  example of smallest embedding dimension is the curve $C_{17,9}$ of  genus $g=24$ with 
semigroup 
\[\langle 17,18,19,20,21,22,23,24, 25 \rangle\]

We cannot use Buchweitz' criterion to conclude that an irreducible curve with
$g=\delta=24$ and conductor $c=34$ is not smoothable, because $(t^{2c})\subset \Hom(\omega,\mO)$
and $\dim \mO/(t^{68}) = 44 < 48=2\delta$.

On the other hand, Buchweitz' monomial curve (Example \ref{ex:buchcurv})
with semigroup $\langle 13,\dots,18, 20,22,23 \rangle$
deforms to other irreducible curves with the same $\delta$.
In particular, the deformation of the parametrisation where only $x_{22}=t^{22}+st^{19}$
and $x_{23}=t^{23}+st^{21}$ are deformed, gives a curve with 
semigroup $\langle 13,14,\dots,21\rangle$. 
The general fibre of the deformation, say the one with $s=1$, can also be seen
as deformation of positive weight of the smoothable  monomial curve $C_{13,9}$
with semigroup $\langle 13,14,\dots,21\rangle$. Here we have that 
$(n-6)(d-n-3)=3 <14$, but the general equisingular deformation of the curve 
$C_{13,9}$ is not smoothable.
\end{ex}

\begin{remark}
The curve $C_{d,n}$ is always smoothable. For $2n\leq d+1$ the singularity is determinantal,
with equations coming from the matrix
\[
\begin{vmatrix}
x_d      & \dots & x_{d+n-2}  & x_{d+n-1}^2 & x_{2d+2n-1} & \dots & x_{3d-2}&   x_{3d-1} \\
x_{d+1} & \dots & x_{d+n-1}& x_{2d+2n-1} & x_{2d+2n} & \dots & x_{3d-1}  &  x_d^3
\end{vmatrix}
\]
For $2n> d+1$ the curve deforms into an $L_d^n$
which lies on the cone over a rational normal curve and is therefore smoothable.
\end{remark}

\begin{prop}\label{prop:equisinsmooth}
Every curve on the equisingularity stratum of $C_{n,d}$ deforms into curve singularity with only smooth
branches; if $d\leq 2n-1$ it deforms into $L_d^n$ and if $d\geq 2n$ it deforms into a curve 
$S_d^{n,N}$ with  $N=\max\{n,n+d-\binom{n+1}2\}$.
\end{prop}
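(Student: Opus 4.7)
The plan is to deform the parametrization of $C_V$ by breaking the factor $t^d$ into a product with $d$ distinct simple zeros, so that the single branch of $C_V$ splits into $d$ smooth branches at the origin. The equisingular condition lets me parametrize $C_V$ as the image of $\phi\colon \A^1 \to \A^M$, where $M$ is the embedding dimension of $C_{d,n}$, in a normal form: the coordinates corresponding to the generators $d, d+1, \ldots, d+n-1$ have the shape $v_i(t) = t^d u_i(t)$ with $u_i(t) = t^{i-1} + O(t^n)$, and (for $d \geq 2n$) the coordinates corresponding to the extra generators $2d+2n-1, \ldots, 3d-1$ have the shape $v_j(t) = t^{2d} w_j(t)$.

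With distinct nonzero constants $\alpha_1, \ldots, \alpha_d$ and $f_s(t) = \prod_k(t - s\alpha_k)$, I would define
\[
\phi_s(t) = \bigl(f_s(t) u_i(t)\bigr)_{i=1}^n \cup \bigl(f_s(t)^2 w_j(t)\bigr)_j.
\]
Then $\phi_0 = \phi$ parametrizes $C_V$, while for $s \neq 0$ the map $\phi_s$ sends each of the $d$ distinct points $s\alpha_k$ to the origin, and is a smooth immersion near each: the first $n$ coordinates vanish to order $1$ with tangent direction $(u_1(s\alpha_k), \ldots, u_n(s\alpha_k)) \in \A^n$, and the extras vanish to order $2$. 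The general fiber is thus a curve with $d$ smooth branches, each tangent to $\A^n \subset \A^M$.

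For $d \leq 2n-1$ there are no extra coordinates and the tangent directions lie on a rational curve in $\P^{n-1}$ which, for small $s$, approximates the rational normal curve of degree $n-1$. Since any $d$ points on this RNC impose $d$ independent conditions on forms of every degree $\geq 2$ (the RNC has Hilbert function $(n-1)\ell + 1 \geq 2n-1 \geq d$ for $\ell \geq 2$), the general fiber has the Hilbert function of $L_d^n$, settling this case.

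For $d \geq 2n$ the general fiber is a curve with $d$ smooth branches tangent to $\A^n$ with quadratic behavior in the extras, fitting the $S_d^{n,\cdot}$ profile. To match the stated embedding dimension $N = \max\{n, n + d - \binom{n+1}{2}\}$ I would argue that the $\binom{n-1}{2}$ independent quadric relations satisfied by the cone over the RNC force $\min\{d-2n+1,\binom{n-1}{2}\}$ of the extras to be polynomial combinations of the first $n$ coordinates modulo the ideal of the deformed curve, collapsing the embedding from $d-n+1$ down to $N$. The hardest step is carrying out this reduction rigorously — verifying that the eliminable extras become polynomial, not merely rational, in the $v_i$'s, which likely requires a refinement of the parametric construction or a supplementary deformation exploiting the determinantal presentation of $C_{d,n}$ displayed in the preceding remark. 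Flatness of the whole family is in each case automatic from the constancy of $\delta = 2d-n-1$.
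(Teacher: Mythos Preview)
Your approach is essentially the paper's: deform the parametrisation by replacing the factor $t^d$ (resp.\ $t^{2d}$) with a monic degree-$d$ polynomial having $d$ simple roots (resp.\ its square), and conclude flatness from $\delta$-constancy. The paper makes two simplifying choices you do not: it takes the specific polynomial $t^d-s$ rather than a general $\prod_k(t-s\alpha_k)$, and it first puts the equisingular curve in a normal form in which the extra coordinates are \emph{exactly} $x_j=t^j$, i.e.\ your $w_j\equiv t^{j-2d}$ with no perturbation. Neither choice changes the idea, but the second one removes the need to track the $w_j$'s.

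Your worry about the embedding-dimension statement for $d\geq 2n$ is legitimate, and you should know that the paper does not carry this out either: after writing down the deformation it simply asserts ``The embedding dimension is $\max\{n,n+d-\binom{n+1}{2}\}$'' with no further argument. So the step you flag as hardest is exactly the one the paper leaves to the reader. Note, incidentally, that for the central fibre $C_{d,n}$ itself (all $\varphi_i=0$) the specific deformed curve has embedding dimension $d-n+1$, not $N$; the drop to $N$ relies on the $\varphi_i$ being generic, so your instinct that some refinement is needed is correct.
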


\begin{proof}
In the case $d\geq 2n$  a curve equisingular with $C_{n,d}$  can be param\-etrised 
as
\[
\begin{cases}
x_i = t^i(1 + \varphi_i(t)), &  d\leq i \leq d+n-1 \\
x_j = t^j ,                & 2d+2n-1\leq j \leq 3d-1
\end{cases}
\]
where the polynomials $\varphi_i(t)$ contain only powers of $t$ in the range $[d+n-i,2d-1-i]$.
The deformation of the parametrisation
\[
\begin{cases}
x_i = (t^d-s)t^{i-d}(1 + \varphi_i(t)), &  d\leq i \leq d+n-1 \\
x_j = (t^d-s)^2t^{j-2d} ,                & 2d+2n-1\leq j \leq 3d-1
\end{cases}
\]
is $\delta$-constant and therefore flat. For $s\neq0$ the curve has smooth branches,
each of which has as tangent line a line through the origin in $\A^n\subset \A^{d-n+1}$.
The embedding dimension is  $\max\{n,n+d-\binom{n+1}2\}$.

For $d<2n$ the deformation $x_i = (t^d-s)t^{i-d}(1 + \varphi_i(t))$ for  $ d\leq i \leq d+n-1$
is a $\delta$-constant deformation into $L_d^n$.
\end{proof}


\section{Gorenstein curves}

\subsection{}
Buchweitz' criterion concerns deformations to curves with at most Gorenstein singularities
(see Corollary \ref{cor:rb-crit}), so it does not apply to symmetric semigroups. 
 Based on a observation of Stöhr, Torres gave a construction  \cite[Scholium 3.5]{To}
of Gorenstein non-Weierstrass semigroups.
Let $\widetilde\Gamma$ be a non-Weierstrass semigroup of genus $\gamma$ and
 let $g\geq 6\gamma +4$. Then the semigroup
 \[
 \Gamma=\{2n \mid n\in \widetilde\Gamma\}\cup \{2g-1-2t\mid t\in \Z\setminus \widetilde\Gamma\}
 \]
 is a symmetric non-Weierstrass semigroup. We refer to these as semigroups of Stöhr-Torres type.

\begin{ex}
The smallest example is 
the semigroup \(
\S=2\langle13,\dots,18,20,22,23\rangle + \langle149,151,157,161\rangle
\)
with $g=100$.
The embedding dimension is 13. There are 66 equations.
Many of them can be given by an incomplete determinantal, the remaining ones are
rolling factors. It is possible to compute the generators of $T^1$ as $\mO$-module.
The deformation of lowest weight is of rolling factors type and changes only the equations of
highest weight. We also can determine a deformation which covers the deformation
of the Buchweitz curve to $L_{13}^9$. Combining these one finds a deformation to a singularity 
with reduced tangent cone. Explicitly it is given by the following partial determinantal,
which we write transposed compared with the determinantal for the Buchweitz curve
in Example \ref{ex:buchcurv}, and rolling factors equations expressing the products of the variables
$\x(149), \dots, \x(161)$.
\[
\begin{vmatrix}
\x(26) &    \x(28) &     \x(30) &    \x(32) &     \x(34)  & \cdot \\
\x(28) &     \x(30) &    \x(32) &    \x(34) &     \x(36) & \x(40)\\
 \x(30) &    \x(32) &    \x(34) &    \x(36)  &    \cdot   &\cdot \\
 \x(32) &   \x(34) &    \x(36)  &   \cdot &     \x(40) & \x(44)\\
 \x(34) &    \x(36)  &   \cdot  &     \x(40)  &    \cdot & \x(46)\\
 \x(36)  &   \cdot &     \x(40)   &   \cdot &   \x(44)   & \cdot\\
 \x(40)   &   \cdot  &   \x(44)&     \x(46)   &   \cdot & \x(26)( \x(26)+\t(26))\\
 \x(44)&     \x(46)   &   \cdot  &     \cdot&  \x(26)( \x(26)+\t(26)) &\x(30)( \x(26)+\t(26))  \\
 \x(46)   & \cdot     & \cdot     & \x(26)( \x(26)+\t(26))  &  \x(28) ( \x(26)+\t(26))
 &\x(32)( \x(26)+\t(26))  \\
 \x(149)  & \x(151) & \cdot&   \cdot     & \x(157) & \x(161) \\
 \x(151) &   \cdot&   \cdot&  \x(157)&\cdot& \cdot\\
 \x(157)&\cdot& \x(161)&\cdot&   \cdot& \cdot
\end{vmatrix}
\]
\begin{gather*}
\x(149)^2-(\x(36) \x(28) \x(26)^7+\t(246) )\x(26)^2 , \\                                        
\x(151) \x(149)-(\x(36) \x(28) \x(26)^7+\t(246) ) \x(28) \x(26) , \\                               
\x(151)^2-(\x(36) \x(28) \x(26)^7+\t(246) )\x(30) \x(26) , \\                                    
\x(157) \x(149)-(\x(36) \x(28) \x(26)^7+\t(246) ) \x(30)^2 , \\                                   
\x(157) \x(151)-(\x(36) \x(28) \x(26)^7+\t(246) ) \x(32) \x(30) , \\                               
\x(161) \x(149)-(\x(36) \x(28) \x(26)^7+\t(246) ) \x(34) \x(30) , \\                               
\x(161) \x(151)-(\x(36) \x(28) \x(26)^7+\t(246) ) \x(36) \x(30) , \\                               
\x(157)^2-(\x(36) \x(28) \x(26)^7+\t(246) )\x(40) \x(28) , \\
\x(161) \x(157)-(\x(36) \x(28) \x(26)^7+\t(246) ) \x(46) \x(26) , \\                         
\x(161)^2-(\x(36) \x(28) \x(26)^7+\t(246) )\x(46) \x(30).  
\end{gather*} 
Retaining only the quadratic part of all these equations with $\t(26)=\t(246)=1$ 
gives a homogeneous singularity, which describes a $L_{26}^{13}$. 
We rename the variables: $\x(26+2k)$ becomes $\x(k)$ 
and $\x(149+2k)$ becomes $\y(k)$. The singularity  is a double cover of the non-smoothable
$L_{13}^9$ into which Buchweitz' curve deforms, see Example \ref{ex:buchcurv}.
This curve lies on the projection of the cone over the rational normal curve of degree $10$
onto $\A^9$: writing $\x(k)=u^{10-k}v^k$ describes the lines as $u^{13}=v^{13}$.
The double cover is then given by $\y(6)=\pm u^4v^6$, $\y(k)=\pm \x(k)$, with the same signs. 
This singularity  $L_{26}^{13}$ has no non-trivial deformations of positive weight, so 
the fibre of the deformation over $\t(26)=\t(246)=1$ is isomorphic to it.
\end{ex}

\begin{prop}\label{prop:goren}
The $L_{26}^{13}$ described above is a non-smoothable Gorenstein curve singularity.
\end{prop}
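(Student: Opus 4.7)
For the Gorenstein property I would apply upper semicontinuity of Cohen--Macaulay type in flat families of reduced curves. By construction, $L_{26}^{13}$ is the generic fibre of the displayed two-parameter flat deformation whose special fibre is the monomial curve $C_\Sigma$ of the symmetric Stöhr-Torres semigroup $\Sigma$; being symmetric, $C_\Sigma$ is Gorenstein, i.e.\ has type $t=1$. Upper semicontinuity of $\dim_\k\omega/\m\omega$ then forces $t(L_{26}^{13})\le t(C_\Sigma)=1$, so $L_{26}^{13}$ is Gorenstein.

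For non-smoothability the key observation is the classical equivalence (cf.\ \cite{EP}): the cone over $2n$ points in $\P^{n-1}$ is arithmetically Gorenstein if and only if the point set is self-associated. Since $L_{26}^{13}$ is Gorenstein with $r=26=2\cdot 13$ lines, the corresponding point set $\Gamma\subset\P^{12}$ is self-associated, i.e.\ coincides with its Gale transform up to a projective transformation. As a sanity check one can also verify this directly from the explicit parametrisation: the $26$ branches are indexed by pairs $(\zeta,\epsilon)\in\mu_{13}\times\{\pm 1\}$, and the Fourier duality of characters of this group matches the chosen coordinates with the complementary characters that describe $\ker P^{t}$.

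Now suppose $L_{26}^{13}$ were smoothable. Since $r-n=13$, Theorem~\ref{thm:smooth-gale} forces the Gale transform of $\Gamma$, which is $\Gamma$ itself, to lie on the canonical image of a smooth curve of genus $13$ in $\P^{12}$. The main obstacle is to exclude this for the specific $\Gamma$. My plan is to do so by an explicit computation of the homogeneous ideal of $\Gamma$, using the $\mu_{13}\times\{\pm 1\}$-symmetry to decompose the spaces of quadrics and cubics into isotypic components; this reduces the computation character by character to a size manageable in Singular or Macaulay2. A contradiction follows if, for example, the scheme cut out by the quadrics in the ideal of $\Gamma$ is zero-dimensional, ruling out any non-trigonal canonical curve through $\Gamma$, or if the Hilbert function of $\Gamma$ disagrees with that of a hyperplane section of a canonical curve of genus $13$. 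As a fallback I would invoke the large-family argument: the family of self-associated $26$-tuples in $\P^{12}$ is, for $g\ge 11$, strictly larger than the family of hyperplane sections of canonical genus-$13$ curves, so a generic self-associated tuple is not such a hyperplane section, and one verifies that $\Gamma$ lies in the open locus where this failure holds. Trigonal canonical curves, whose ideals require cubic generators, form a low-dimensional locus in $\mathcal{M}_{13}$ and can be handled separately.
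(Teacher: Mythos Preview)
Your Gorenstein argument via upper semicontinuity of the Cohen--Macaulay type is correct and is a reasonable way to establish that part of the statement.

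The non-smoothability argument, however, has a genuine error. You invoke Theorem~\ref{thm:smooth-gale}, whose hypothesis is that $\Gamma$ is a hyperplane section of a \emph{non-special} curve of genus $g=r-n$. But a Gorenstein $L_{2n}^n$ is precisely the case where the $2n$ points fail by one to impose independent conditions on quadrics; this forces the conductor to be $\m^3$ rather than $\m^2$, so $\delta=3n=39$ (not the general-position value $2r-(n+1)=38$), and the genus of any smoothing is $\delta-r+1=14$, not $13$. A degree-$26$ genus-$14$ curve linearly normal in $\P^{13}$ is canonically embedded, hence \emph{special}, and Theorem~\ref{thm:smooth-gale} does not apply. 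The correct reformulation---that smoothability would force $\Gamma$ to be a hyperplane section of a canonical curve of genus $14$---is exactly the content of the paper's Section on self-associated point sets, but Theorem~\ref{thm:smooth-gale} is not the tool that delivers it.

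Even granting the corrected framework, what remains in your proposal is a \emph{plan}: the explicit ideal computation is not carried out, the Hilbert-function test cannot succeed (a self-associated point set has the same Hilbert function as a hyperplane section of a canonical curve, by design), and the large-family fallback only shows that the \emph{generic} self-associated $26$-tuple fails, leaving the verification for this specific $\Gamma$ as an unproved assertion. The paper's proof avoids all of this: a direct computer computation shows that $T^1$ of this $L_{26}^{13}$ is concentrated in degree~$0$, of dimension $78$ equal to the number of moduli of the point configuration. Hence there are no infinitesimal deformations of negative weight at all, and in particular no smoothing.
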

\begin{proof}
A computer computation shows that $T^1$ is concentrated in degree 0, of dimension $78$, the dimension of the moduli space.
\end{proof}

The fact  that this deformation of lowest degree 
leads to a non-smoothable singularity, suggests that 
the singularity is not smoothable at all. More evidence is given by a computation of 
the generators of $T^1$ as $\mO$-module, which  shows that
all the perturbations of the equations lie in the square of the maximal ideal. 
We expect this to hold in general.

\begin{conj} 
The Gorenstein monomial curves of Stöhr-Torres type are not smoothable. 
\end{conj}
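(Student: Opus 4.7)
The plan is to generalize the mechanism exhibited in the $g=100$ example, where $C_\Gamma$ is deformed to the Gorenstein line-cone $L_{26}^{13}$ of Proposition \ref{prop:goren}, and then invoke openness of versality. The structural feature to exploit is that the Stöhr-Torres construction splits the generators of $\Gamma$ into an ``even block'' $2\widetilde\Gamma$ — whose defining relations mirror those of the Buchweitz-type curve $C_{\widetilde\Gamma}$ — and an ``odd block'' $\{2g-1-2t : t \notin \widetilde\Gamma\}$, whose defining relations are essentially quadratic, expressing products of odd-block variables as elements of the even block. In ring-theoretic terms this makes $\mO_{C_\Gamma}$ a degree-two extension of the subring generated by the even block, which at the level of line-cone deformations will manifest as an honest double cover.

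I would proceed in three steps. First, write down a general rolling-factors presentation of the ideal of $C_\Gamma$ compatible with this block decomposition, using the $g=100$ presentation as a template. Second, exhibit a one-parameter deformation of lowest Pinkham weight whose restriction to the even block coincides with the Buchweitz deformation of $C_{\widetilde\Gamma}$ into its tangent-cone line arrangement $L_{a_1}^{n}$ (with $a_1$ the multiplicity and $n$ the embedding dimension of $\widetilde\Gamma$), and which simultaneously perturbs the odd-block equations consistently; the generic fibre is then a line-cone $L_{2a_1}^{n+k}$ that double-covers $L_{a_1}^n$, where $k$ is the number of odd generators of $\Gamma$. Third, prove $L_{2a_1}^{n+k}$ is Gorenstein and not smoothable. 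By Theorem \ref{thm:smooth-gale}, smoothability would force the Gale transform of the $2a_1$ points in $\P^{g-1}$ to lie on a canonical curve of genus $g$; the double-cover structure of that point set, inherited via the involution, should then project to a configuration on a canonical curve of genus $\gamma$ containing the Gale transform of the $a_1$ points of $L_{a_1}^n$, contradicting the non-Weierstrass property of $\widetilde\Gamma$. The hypothesis $g \geq 6\gamma + 4$ in the Stöhr-Torres construction should be exactly what makes the relevant moduli count close up.

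The main obstacle lies in step three: a smoothing of $L_{2a_1}^{n+k}$ need not respect the double-cover involution \emph{a priori}, so the reduction to the non-Weierstrass property of $\widetilde\Gamma$ is not automatic. I expect the technical heart of the proof to be a dimension argument applying Proposition \ref{prop:greuel} to the family of double covers fibred over the moduli of $L_{a_1}^n$-configurations: one must show that the equisingular stratum of $L_{2a_1}^{n+k}$ has dimension at least the Deligne-Greuel number $e = 2\delta + t - 1$, in line with the direct $T^1$ computation carried out for the $g=100$ case. Secondary obstacles are the uniformity of the rolling-factors presentation in step one — general Buchweitz-type semigroups may lack the convenient projected-rational-normal-curve structure used in the example — and the computation of the Cohen–Macaulay type $t$ of $L_{2a_1}^{n+k}$, which may require a substitute for Proposition \ref{prop:type} adapted to point sets with a free involution.
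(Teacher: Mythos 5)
First, a point of comparison: the paper does not prove this statement --- it is stated as a conjecture, supported only by the explicit $g=100$ computation (the deformation into the $L_{26}^{13}$ of Proposition \ref{prop:goren}, plus a computation that the generators of $T^1$ as $\mO$-module all have perturbations in $\m^2$). So there is no proof in the paper to match your proposal against; the question is whether your program would close the gap, and it would not, for a reason of logical direction. Your plan is to deform $C_\Gamma$ into a non-smoothable line cone and then ``invoke openness of versality.'' But openness of versality propagates non-smoothability \emph{from} the special fibre \emph{to} the curves it deforms into, not the reverse: if $C_\Gamma$ deforms into a non-smoothable $Y$, the base space of $C_\Gamma$ may still have a separate smoothing component that does not pass near $Y$. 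This is exactly why the paper says the $g=100$ deformation only ``suggests'' non-smoothability and leaves the statement as a conjecture. To prove it you must control all of $T^1_{<0}$ (or all components of the base space) of the monomial curve $C_\Gamma$ itself; exhibiting one distinguished deformation, however suggestive, proves nothing about the other directions.

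Even setting that aside, step three has the difficulty you yourself flag, and it is fatal as stated: a smoothing of $L_{2a_1}^{n+k}$ need not respect the double-cover involution, so the reduction to the non-Weierstrass property of $\widetilde\Gamma$ via Theorem \ref{thm:smooth-gale} does not go through without a new idea. Note that the paper's verification for $L_{26}^{13}$ does not use the Gale transform at all: it is a direct computation that $T^1$ is concentrated in degree $0$ (of dimension $78$, the number of moduli), so that there are no negative-weight deformations whatsoever. A workable generalization of Proposition \ref{prop:goren} would more plausibly run through an analogue of that computation --- e.g.\ a dimension count via Proposition \ref{prop:greuel} comparing the $n(n-1)/2$ moduli of self-associated point sets with the Deligne number, as in Theorem \ref{thm:saccnotsm} --- than through the involution-equivariant Gale argument. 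But even a complete such argument would only generalize Proposition \ref{prop:goren} to all Stöhr--Torres types; it would still leave the first gap, the passage from the non-smoothability of the limiting line cone back to the non-smoothability of the monomial curve, untouched.
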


\subsection{Self-associated point sets}
The explicit Gorenstein curve $L_{26}^{13}$ of Proposition \ref{prop:goren}
is a cone over a well-known type of
 point set: self-associated point sets \cite{Co}. 

\begin{defn} 
A set $\Gamma$ of $2n$ ordered points in $\P^{n-1}$ is self-associated 
if its Gale transform is projectively equivalent to $\Gamma$.
\end{defn}

We consider in general   cones $L_{2n}^n$  over   self-associated point sets.
We assume that every subset of $2n-1$ points is in uniform position (this is not the case
for the $L_{26}^{13}$ above). In particular
 the $2n-1$ points pose independent conditions on quadrics.
If the $2n$ points are self-associated, then they  fail  by one to impose
independent conditions on quadrics and the $L_{2n}^n$
is Gorenstein, see \cite[Sect. 7]{EP}.
The configuration is characterised
by the fact that quadrics passing through any $2n-1$ of the points pass through remaining point.
The number of moduli is $n(n-1)/2$; this was classically known, see \cite[Cor. 8.4]{EP}. 
Ordered sets of $2n$  self-associated points can be parametrised as $(I,P)$ with 
$P\in \operatorname{SO}(n,\k)$. For computations it is more convenient to parametrise 
$ \operatorname{SO}(n,\k)$ using the Cayley transform $A\mapsto (I+A)^{-1}(I-A)$ on the
set of matrices with $I+A$ invertible; it induces a birational map between skew-symmetric and special orthogonal matrices. A general self-dual configuration has a skew normal form
$(I+S,I-S)$ with $S$ skew-symmetric \cite[Thm. 2.9]{BK}.

\begin{thm}\label{thm:saccnotsm}
The general Gorenstein $L_{2n}^n$ is not smoothable if $n>9$.
\end{thm}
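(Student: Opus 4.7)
The plan is to apply Greuel's large-family criterion (Proposition \ref{prop:greuel}) to the family of cones over self-associated point sets, making crucial use of the fact that self-associatedness bumps the $\delta$-invariant up by one compared with the generic-position value of Lemma \ref{lem:delta}.

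First I would parametrise the family. The skew normal form $(I + S,\, I - S)$ with $S \in \mathrm{Skew}(n,\k)$ from \cite{BK} gives an irreducible affine space $T$ of dimension $\binom{n}{2}$ whose general member defines a self-associated $2n$-tuple in $\P^{n-1}$. Since distinct projective equivalence classes yield non-isomorphic cone singularities, the hypothesis $\mathcal{C}_t \not\cong \mathcal{C}_0$ of Proposition \ref{prop:greuel} holds on a dense open subset of $T$.

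Second I would compute the relevant invariants. The Hilbert function of a general self-associated $\Gamma$ is
\[
H(0) = 1,\quad H(1) = n,\quad H(2) = 2n - 1,\quad H(\ell) = 2n \text{ for } \ell \ge 3,
\]
where the drop $H(2) = 2n - 1$ is precisely the defining Gorenstein condition, and no extra dependencies occur at higher degrees for a generic member of $T$ (this being a proper closed condition on the irreducible family). Hence
\[
\delta = \sum_\ell \bigl(2n - H(\ell)\bigr) = (2n - 1) + n + 1 = 3n,
\]
which exceeds the uniform-position value $3n - 1$ by exactly one. With $t = 1$ (Gorenstein), Greuel's formula yields
\[
e = 2\delta - r + t = 2(3n) - 2n + 1 = 4n + 1.
\]

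Finally, Proposition \ref{prop:greuel} gives non-smoothability of the general member of $T$ as soon as $\dim T \ge e$, i.e.\ $\binom{n}{2} \ge 4n + 1$. This rearranges to $n^2 - 9n - 2 \ge 0$, whose positive root is $(9 + \sqrt{89})/2 \approx 9.22$, so the inequality holds precisely for $n \ge 10$, matching the hypothesis $n > 9$ of the theorem. The main technical point, and where I expect the most care to be needed, is the Hilbert function computation, specifically verifying $H(\ell) = 2n$ generically for $\ell \ge 3$; this is most cleanly handled by exhibiting a single self-associated configuration in which no extra dependencies beyond the forced degree-two one arise (for instance the $L_{26}^{13}$ of Proposition \ref{prop:goren} or a Stöhr--Torres type specialisation), and then invoking upper-semicontinuity of the Hilbert function on the irreducible family $T$.
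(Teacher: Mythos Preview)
Your proof is essentially the same as the paper's: both apply Greuel's large-family criterion with moduli dimension $\binom{n}{2}$, compute $\delta = 3n$ and $e = 4n+1$, and arrive at $\binom{n}{2} \geq 4n+1$ for $n \geq 10$. The only cosmetic difference is that the paper obtains $\delta = 3n$ by splitting off one line and using $L \cdot L_{2n-1}^n = 3$ (self-associatedness forces every quadric through any $2n-1$ of the points to pass through the last), whereas you read it directly from the Hilbert function.
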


\begin{proof}
Let $C$ be a Gorenstein $L_{2n}^n$.  Then the points fail
 to impose independent conditions on quadrics but every
maximal proper subset imposes independent conditions on quadrics \cite[Sect. 7]{EP}. We
may also assume that every subset of $n$ points span $\P^{n-1}$.
Therefore a  subcurve $C_{2n-1}=L_{2n-1}^n$ has $\delta=3n-3$ and as all quadrics vanishing on
$C_{2n-1}$ also vanish on the remaining line $L$ we have that $L\cdot C_{2n-1}=3$.
So $\delta(C)=3n$ and the Deligne number is $e=\mu = 
\delta+(\delta-r+1)=3n+(n+1)=4n+1$.
As  the family of Gorenstein  $L_{2n}^n$ has dimension $n(n-1)/2$, the general such curve
is not smoothable if $n(n-1)/2\geq 4n+1$.
\end{proof}

We can also find this conclusion in another way. 
Petrakiev observed that for large $n$ the general self-associated point set is not  a hyperplane section
of a canonical curve \cite[Sect. 4]{Pe}.
The number of moduli of  hyperplane sections of  canonical curves
of genus $g$ in $P^{g-1}$
 is $(3g-3) + (g-1)$. Therefore the general $L_{2g-2}^{g-1}$ is not negatively
smoothable if $(g-1)(g-2)/2>4g-4$, which is the same bound as the Theorem above. 
In fact, for such curves negatively smoothable and smoothable are the same.

\begin{prop}
A Gorenstein $L_{2n}^n$ is always negatively graded.
\end{prop}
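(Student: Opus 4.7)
The plan is to show $T^1(C)_\ell = 0$ for all $\ell \geq 1$, so that every infinitesimal deformation has weight $\leq 0$; any smoothing must then come from strictly negative weight, which is the content of being negatively smoothable. The argument is a degree-by-degree analysis of the exact sequence \eqref{eq:exact}.

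First I collect the graded data. Under the standing assumption that every $(2n-1)$-subset of the self-associated points is in uniform position, the computation in the proof of Theorem \ref{thm:saccnotsm} yields $\delta(C) = 3n$. The Gorenstein condition is exactly that $\dim \mO_2 = 2n - 1$, since self-association says the $2n$ points fail by one to impose independent conditions on quadrics. Together with $\dim \mO_0 = 1$ and $\dim \mO_1 = n$, the dimensions of the graded pieces of $\overline{\mO}/\mO$ are $2n-1,\, n,\, 1,\, 0,\, 0,\ldots$, which indeed sum to $3n = \delta$. In particular $\mO_\ell = K_\ell$ for all $\ell \geq 3$, so Corollary \ref{cor:Tplusnul} immediately gives $T^1_\ell = 0$ whenever $\ell \geq 2$.

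The remaining case $\ell = 1$ is the only delicate one. Since $C$ has no equations in degree one and $K_{1+q_j}/\mO_{1+q_j} = 0$ for every generator degree $q_j \geq 2$, the target of the Jacobian $\partial f$ in degree $1$ vanishes, and $(\ker\partial f)_1 = (K_2/\mO_2)^n \cong \k^n$. The exact sequence \eqref{eq:exact} therefore reduces to
\[
T^1_1 \;=\; \Coker\!\bigl(K_1 \longrightarrow (K_2/\mO_2)^n\bigr),
\]
the map being $a \mapsto (ax_1, \ldots, ax_n) \bmod \mO_2$. Identifying $K_2 = \k^{2n}$ with evaluations at the $2n$ points, let $\phi = (\phi_1, \ldots, \phi_{2n})$ span the one-dimensional annihilator of $\mO_2$ in $K_2^{\vee}$; this $\phi$ encodes the self-association relation. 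The key observation is that every component $\phi_i$ is nonzero: if some $\phi_i$ vanished, then $\phi$ would restrict to a nonzero functional on the remaining $2n-1$ coordinates vanishing on the image of $\mO_2$ there, contradicting the hypothesis that those $2n-1$ points impose independent conditions on quadrics. After the diagonal change of variables $c_i = \phi_i a_i$, the map $a \mapsto \phi(ax_k)$ becomes $c \mapsto (Pc)_k$, where $P$ is the $n \times 2n$ matrix of point coordinates. Since the points span $\P^{n-1}$, $P$ has rank $n$ and this map is surjective, whence $T^1_1 = 0$.

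The main obstacle I expect is the nonvanishing of the $\phi_i$. This is precisely where the standing uniform-position hypothesis on $(2n-1)$-subsets enters, and it explains why the argument does not extend to degenerate configurations such as the explicit $L_{26}^{13}$ of Proposition \ref{prop:goren}, whose seven-on-each-coordinate-hyperplane incidence pattern forces some $\phi_i$ to vanish; consistently, that example has $T^1$ concentrated purely in degree $0$ rather than only in non-positive degrees.
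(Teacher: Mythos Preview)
Your proof is correct and follows the paper's approach: both reduce via the exact sequence \eqref{eq:exact} to the surjectivity of $K_1 \to (K_2/\mO_2)^n$ in degree~$1$, with your dual argument through the functional $\phi$ and the point matrix $P$ being a clean repackaging of the paper's direct computation that the kernel equals $\m_1$. One slip in your closing remark: the ``seven-on-each-coordinate-hyperplane'' pattern belongs to the $L_{10}^6$ of Example~\ref{ex:L106}, not to the $L_{26}^{13}$ of Proposition~\ref{prop:goren}; and since that $L_{26}^{13}$ has $T^1$ concentrated in degree~$0$, the proposition's conclusion holds for it as well---only the uniform-position hypothesis used in the argument fails.
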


 \begin{proof}
 We compute in the same way as in Proposition \ref{prop:tplus}.
 We have $K_{\ell}=\mO_{\ell}$ for $\ell\geq 3$. Therefore $(\ker 
 {\partial f})_{\ell}=
 (K/\mO)^n_{\ell}$ for $\ell\geq 1$.  This shows that $T^1_{\ell}=0$ for $\ell>1$. For 
 $\ell=1$ we have $\dim (K/\mO)_1=\dim K_2/\mO_2=1$.
  Because $\dim K_1 = 2n$, in order to show that $T^1_1=0$ 
 we have to show that  $\dim \{ a\in K_1\mid a\m_1\subset \mO_2\}=n$.  
 An element $a\in K_1$
 has the form $(a_1t_1,\dots,a_{2n}t_{2n})$. By subtracting elements in $\m_1\subset
 \{ a\in K_1\mid a\m_1\subset \mO_2\}$,
  we can achieve
 that $a_1=\dots=a_n=0$. There exist a linear form $l\in\m_1$ such that $l$ vanishes
 in the points $P_{n+1}$, \dots, $P_{n+j-1}$, $P_{n+j+1}$, \dots, $P_{2n}$, but not in $P_{n+j}$,
 for $1\leq j \leq n$. Then $l$
 has as element of $K_2$ the form $(0,\dots,0,a_{n+j}l(P_{n+j})t_{n+j}^2,0\dots,0)$ and
 this is not an element of $\mO_2$ if $a_{n+j}\neq0$. Therefore 
$ \{ a\in K_1\mid a\m_1\subset \mO_2\}=\m_1$ and has dimension $n$.
 \end{proof}
 
 \begin{thm}
 The general Gorenstein $L_{2g-2}^{g-1}$ is smoothable if $g\leq 8$ or $g=10$ and not smoothable otherwise.
 \end{thm}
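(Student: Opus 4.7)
The plan is to reduce the statement to a classical incidence question about canonical curves and self-associated point sets, and then treat the cases $g\leq 8$, $g=9$, $g=10$, and $g\geq 11$ separately. By the preceding proposition every Gorenstein $L_{2n}^n$ is negatively graded, so smoothability and negative smoothability coincide. Combining Greuel's smoothing criterion (the points of a smoothable $L_{2g-2}^{g-1}$ arise as a hyperplane section of a non-special curve of genus $g-1$ in $\P^{g-1}$) with Theorem~\ref{thm:smooth-gale} and the classical fact that hyperplane sections of a canonical curve of genus $g$ in $\P^{g-1}$ are self-associated, the general $L_{2g-2}^{g-1}$ is smoothable iff the general self-associated set of $2g-2$ points in $\P^{g-2}$ is a hyperplane section of a canonical curve of genus $g$. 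The question becomes the surjectivity of the incidence map
\[
\phi\colon \{(C,H)\}/\mathrm{PGL}\;\longrightarrow\;\{\text{self-associated sets in }\P^{g-2}\}/\mathrm{PGL},
\]
whose source has dimension $(3g-3)+(g-1)=4g-4$ and whose target has dimension $(g-1)(g-2)/2$.

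For $g\geq 11$, Theorem~\ref{thm:saccnotsm} applies: the strict inequality $(g-1)(g-2)/2>4g-4$ rules out dominance of $\phi$, so the general self-associated set is not a hyperplane section of any canonical curve and the cone is not smoothable. For $g\leq 8$ the inequality is reversed with ample margin, and the surjectivity of $\phi$ is verified using the classical descriptions of low-genus canonical curves: a complete intersection of a quadric and a cubic for $g=4$, of three quadrics for $g=5$, and Petri-generic intersections of quadrics (supplemented by cubics where the curve is trigonal) for $g=6,7,8$. In each case a generic self-associated set lies on enough quadrics to recover a canonical curve through it, and a direct parameter count shows $\phi$ is dominant, with fibres of the expected positive dimension $4g-4-(g-1)(g-2)/2$.

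The delicate cases are $g=10$, where the dimensions of source and target agree ($36=36$), and $g=9$, where $32>28$ but $\phi$ nevertheless fails to dominate. For $g=10$ one invokes Mukai's description of the general canonical curve of genus $10$ as a linear section of a specific homogeneous fivefold of degree $12$ in $\P^{13}$; this supplies just enough moduli to pass a canonical genus-$10$ curve through a generic self-associated configuration and to check that $\phi$ has generically finite fibres, hence is dominant. For $g=9$ the analogous input is Mukai's realisation of a general canonical curve of genus $9$ as a $5$-codimensional linear section of the symplectic Grassmannian $\mathrm{LGr}(3,6)\subset\P^{13}$; the Grassmannian structure forces its hyperplane sections to satisfy an additional closed condition that a generic self-associated set does not satisfy, so the image of $\phi$ is a proper subvariety. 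The main obstacle is the case $g=9$: pure dimension counting is consistent with smoothability, and only the special Mukai structure of canonical genus-$9$ curves reveals the obstruction; the $g=10$ case is borderline but comes out favourably for the same reason, and both can be corroborated by explicit parametrizations and, if needed, a Macaulay2 or Singular computation verifying the rank of the incidence correspondence.
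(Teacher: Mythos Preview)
Your overall architecture matches the paper's: reduce via the negative-grading proposition to the question of whether a general self-associated set of $2g-2$ points in $\P^{g-2}$ is a hyperplane section of a canonical curve of genus $g$, then handle $g\geq 11$ by Theorem~\ref{thm:saccnotsm} and $g=9$ via the Lagrangian Grassmannian. (Your derivation of the criterion is slightly garbled---the smoothing is a canonical curve of genus $g$, which is special, not a non-special curve of genus $g-1$, so Theorem~\ref{thm:smooth-gale} is not the right tool here---but you land on the correct statement.) For $g=9$ the paper makes the obstruction quantitative rather than invoking an unspecified ``additional closed condition'': codimension-$6$ linear sections of $\operatorname{LG}(3,6)$ have $27$ moduli, while self-associated sets have $28$.

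There is a genuine gap at $g=10$. You invoke ``Mukai's description of the general canonical curve of genus $10$ as a linear section of a specific homogeneous fivefold'', but Mukai's result \cite{Muk} is precisely the opposite: the general curve of genus $10$ does \emph{not} lie on a $K3$ surface and hence is \emph{not} a linear section of the $G_2$-variety. No Mukai model is available for the general genus-$10$ curve, so your dominance argument for $\phi$ collapses. The paper settles $g=10$ by direct computation instead: for a random $L_{18}^9$ built from the skew normal form one verifies $\dim T^1_{-1}=1$ and that this infinitesimal deformation extends to a smoothing whose total space is the cone over a canonically embedded curve. Your treatment of $g=7,8$ is also too optimistic: ``a generic self-associated set lies on enough quadrics to recover a canonical curve'' is not how the argument goes; these cases are the main theorems of Petrakiev \cite{Pe}, proved via the Mukai Grassmannians for those genera, and should be cited rather than sketched.
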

 \begin{proof}
A  Gorenstein $L_{2g-2}^{g-1}$ is negatively smoothable if the corresponding point set $\Gamma$ is  a
hyperplane section of a canonically embedded curve. This is classically known for
$g\leq 6$. The cases $g=7$ and $g=8$ are the main results of \cite{Pe}.  The general canonical
curve of genus 7 and 8 is a linear section of a Mukai Grassmannian, and Petrakiev shows that 
the same holds for the general $\Gamma$. For $g=9$ the general canonical curve
is a codimension 5 linear section of the Lagrangian Grassmannian 
$\operatorname{LG}(3,6)\subset \P^{14}$. The number of  moduli of codimension 6 linear
sections is 27 \cite[Sect. 4]{Pe}, whereas the moduli space of $\Gamma$ has dimension 28.
The dimension of a smoothing component of $L_{16}^8$ is 33, and the cone over
$\operatorname{LG}(3,6)$
is the total space of the versal deformation in negative degrees, so the smoothing component
intersects the equisingularity stratum in a $33-6=27$-dimensional space.  Therefore the general 
$L_{16}^8$  is not smoothable.
 
 For $g=10$ the general canonical curve does not lie on a $K3$ surface \cite{Muk}.
 A direct computation for a random $L_{18}^9$ (constructed from the skew normal form)
 shows that $\dim T^1_{-1}=1$ and that this infinitesimal
 deformation can be extended to a deformation, whose total space is the cone over a 
 canonically embedded curve. 
 
 For $g\geq 11$ the result follows from Theorem \ref{thm:saccnotsm}.
  \end{proof}

\section{Generic curves}
The first occurrence of the  term generic singularity  seems to be in a famous paper by 
Schlessinger \cite{Sc}. He says that a singularity is ``generic'' if it is not the specialisation of any other
singularity $X'$, where a specialisation is defined as 1-parameter deformation
with $X$ the special fibre and the other fibres all isomorphic to $X'$. Under such a definition
the curve  consisting of  an ordinary cusp and $11$ lines in general
position through the origin in    $\A^8$ of Proposition \ref{prop:cusplines} is ``generic''.

Iarrobino \cite{Ia2,IE} defines the term for zero-dimensional 
singularities using the Hilbert scheme. A generic singularity is one parametrised by a
generic point of a component of the Hilbert scheme parametrising only irreducible schemes.
To give explicit examples Iarrobino and Emsalem \cite{IE} look at almost-generic thick points,
meaning that  such a point  deforms only to other thick points of the same type (a notion they 
deliberately leave vague) and the parametrising point lies on a
single component of the Hilbert scheme.  They use the term ``generic'' for such singularities.

This point of view suggests to define a generic (curve) singularity as one parame\-trised by a
generic point  of a component of a base space of a versal deformation, excluding
smooth points.  A singularity is ``generic'' if its  base space has only one component,
and it has only equisingular deformations (for space curves not a precise concept either, see \cite{BG}).
A general non-smoothable homogeneous $L_n^r$ having deformations of positive weight is 
``generic'', while the generic singularity with the same tangent cone is not quasi-homogeneous. The curve of Proposition \ref{prop:cusplines} is not ``generic'' in this sense.

All our examples of non-smoothable curves are basically based
on curves with smooth branches or on monomial curves. This is not a severe restriction, as monomial 
curves are in a certain sense the most singular ones. The ones we encountered deform into
curves with smooth branches, see e.g. Proposition \ref{prop:equisinsmooth}. We did not
find irreducible curves deforming into $L_r^n$ with large $d(n,r)$. This shows how limited our 
 knowledge is. Nevertheless, based on our examples we offer:

\begin{conj}
All branches of generic curve singularities are smooth.
\end{conj}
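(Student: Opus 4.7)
The plan is to argue the contrapositive: if $(C,0)$ has a non-smooth branch, then $(C,0)$ sits on more than one component of its versal base, or admits a non-equisingular deformation, so it is not generic in the sense used at the end of the paper. Given the definition (generic singularity = generic point of a component, with only equisingular deformations), the task is to manufacture, from a singular branch, a deformation that genuinely changes the singularity type of the total curve.

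First I would exploit the branch structure. Write $(C,0)=\bigcup_{i=1}^r (C_i,0)$ with normalisation $n\colon (\overline C,n^{-1}(0))\to (C,0)$. If some branch $(C_i,0)$ is singular, then its own deformation functor is non-trivial: its parametrisation can be perturbed so that $\delta(C_i)$ drops. Concretely, if $(C_i,0)$ is parametrised by a subring $R_i\subset \k[[t_i]]$ with $R_i\ne \k[[t_i]]$, enlarging $R_i$ to $R_i+\k\cdot t_i^N$ for appropriate $N$ (small enough so that the new generator is not already in $R_i$) gives a one-parameter $\delta$-dropping deformation of the branch alone. Away from the origin nothing happens, so this is a deformation of $(C_i,0)$ as an element of $(C,0)$ that preserves the ambient embedding outside a neighbourhood of $0$.

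Next I would lift the branch-wise deformation to the whole $(C,0)$. Using $n_*\mO_{\overline C}=\bigoplus \overline\mO_i$ and the presentation of $\mO=\mO_{C,0}$ as the equaliser of $\bigoplus\overline\mO_i$ with the local data at the contact points, a deformation of one summand $\overline\mO_i$ that fixes its image modulo the conductor $\mathcal C$ restricted to that branch automatically gives a flat deformation of $\mO$. This is where one should invoke the exact sequence
\[
0\to T^0(\mO,\mO)\to T^0(\mO,K)\to T^0(\mO,K/\mO)\to T^1(\mO,\mO)\to 0
\]
from the preliminaries: a perturbation of the parametrisation of a single branch produces a class in $T^0(\mO,K/\mO)$ supported on that branch and living outside the image of $K$, hence a non-zero class in $T^1$. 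By semicontinuity of $\delta$ (or of $\mu$, or of the number of branches in a smoothing of that branch) this deformation is non-equisingular, giving the contradiction.

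The main obstacle, and the reason this is only a conjecture, is twofold. First, the construction above produces an infinitesimal deformation, and one must integrate it; in principle there could be obstructions from $T^2$ coupling the chosen branch deformation with the remaining branches through the conductor, so the seemingly innocent ``partial normalisation'' direction may be obstructed. Second, and more seriously, even an unobstructed deformation might keep $(C,0)$ on the same component: the new fibre, though not isomorphic to $(C,0)$, could specialise back to $(C,0)$ inside that component, so $(C,0)$ could still be the generic singularity even while admitting non-trivial deformations. Ruling this out requires a genuine understanding of which deformation directions cross between components of the base, which for curves of high embedding dimension is exactly the open territory the paper explores through $L_r^n$, Gorenstein cones, and Stöhr--Torres semigroups; absent a general structural theorem controlling these components, the conjecture seems to require a case analysis (smooth branches with tangential contact versus genuinely singular branches) combined with a refined version of the Deligne--Greuel dimension count applied branch-by-branch.
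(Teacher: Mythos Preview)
The statement you are attempting to prove is a \emph{conjecture} in the paper, not a theorem; the paper offers no proof. It is presented as the final conjecture, motivated by the examples accumulated throughout (monomial curves deforming into $L_r^n$ or $S_d^{n,N}$, the cusp-plus-lines example of Proposition~\ref{prop:cusplines}, etc.), with the explicit caveat that the evidence is limited and that the notion of ``generic'' is itself not fully precise for space curves. So there is nothing in the paper to compare your argument against.

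Your outline is a reasonable heuristic, and you correctly identify the essential obstacle yourself: the infinitesimal deformation you build from perturbing a singular branch need not be unobstructed. Deformations of the parametrisation are unobstructed as deformations of the \emph{map} $n\colon \overline C\to \A^n$, but that is not the same functor as deformations of the image $(C,0)$; the passage from the former to the latter is exactly where the conductor and $T^2$ intervene, and it is known that these can obstruct. Your second worry (the new fibre specialising back to $(C,0)$ on the same component) is a red herring: if $(C,0)$ is the fibre over a general point $p$ of a reduced component $E$, then $p$ is a smooth point of the base and any one-parameter deformation stays in $E$, so a nearby non-isomorphic fibre already contradicts genericity. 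The real gap is purely the obstruction issue, and that is precisely why the author leaves this as a conjecture rather than a theorem.

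One further subtlety your argument glosses over: a $\delta$-dropping perturbation of a single branch need not extend to a flat deformation of the whole $(C,0)$ while keeping the other branches fixed, because enlarging one $R_i$ inside $\k[[t_i]]$ changes the conductor and hence the gluing data with the other branches. The exact sequence you invoke produces a class in $T^1$, but checking it is nonzero requires showing the perturbation is not induced by an ambient vector field, and that step is not automatic when the branches are tangent or have high contact.
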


\section*{Acknowledgement}
We express our gratitude to the referee for all suggestions to improve the paper.

\end{document}